\documentclass[reqno,10pt,a4paper]{amsart}

\usepackage[margin=0.8in]{geometry}
\usepackage{parskip}
\usepackage{amsmath}
\usepackage{amsfonts}
\usepackage{amssymb}
\usepackage{amsthm}
\usepackage{xspace}
\usepackage{array}
\usepackage{url}
\usepackage[bookmarks=false]{hyperref}
\usepackage[dvipsnames]{xcolor}
\usepackage{mathrsfs}
\usepackage{mathtools}
\usepackage{scalerel}
\usepackage{enumitem}
\usepackage{titlesec}

\hypersetup
    {
        colorlinks    = true,
        linkcolor     = blue,     
        urlcolor      = SeaGreen,
    	citecolor     = JungleGreen,
    	bookmarksopen = false,
    }
\theoremstyle{plain}
\newtheorem{theorem}{Theorem}
\newtheorem{lemma}[theorem]{Lemma}
\newtheorem{proposition}[theorem]{Proposition}
\newtheorem{corollary}[theorem]{Corollary}
\newtheorem{example}[theorem]{Example}
\newtheorem{motivatingExample}[theorem]{Motivating Example}
\newtheorem{remark}[theorem]{Remark}

\theoremstyle{remark}

\numberwithin{equation}{section}
\numberwithin{theorem}{section}

\newcommand{\ie}{i.e.\@\xspace}

\newcommand{\R}{\mathbb{R}}
\newcommand{\N}{\mathbb{N}}

\newcommand{\tr}{\mathrm{tr}}
\renewcommand{\S}{\mathbb{S}}
\renewcommand{\P}{\mathbb{P}}

\DeclareMathOperator{\vol}{\mathrm{vol}}
\renewcommand{\div}{\operatorname{div}}

\newcommand{\two}{\mathrm{I\!I}}

\DeclareMathOperator{\hess}{Hess}
\DeclareMathOperator{\tracelessHess}{\mathring{Hess}}

\DeclareMathOperator{\scal}{scal}
\DeclareMathOperator{\ric}{Ric}

\DeclareMathOperator{\sn}{sn}
\newcommand{\snk}{\sn_k}

\newcommand*{\dd}{\mathop{}\!\mathrm{d}}

\newcommand{\dist}{\mathrm{dist}}

\titleformat{\section}{\normalfont\fontsize{12}{15}\bfseries}{\thesection.}{1em}{}

\titleformat{\subsection}[runin]{\bfseries}{\thesubsection.}{0.5em}{}
\titlespacing*{\subsection}{0pt}{2ex}{1em}

\begin{document}

\title[Sharp monotonicity for closed three-manifolds with positive scalar curvature]{Sharp monotonicity for closed three-manifolds with positive scalar curvature}

\author[Cosmin Manea]{Cosmin Manea}
\address{Department of Mathematics, Massachusetts Institute of Technology, 77 Massachusetts Avenue, Cambridge, MA, USA}
\email{cosmanea@mit.edu}

\begin{abstract}
    We prove two sharp monotonicity formulae for the natural Green's function of a closed three-manifold with positive scalar curvature, analogous to the Munteanu-Wang theorem for non-negative scalar curvature.
    We also show that both results may be interpreted as the monotonicity of a suitably defined ``enclosed mass'' of the sublevel sets of Green's function in the corresponding ``conformal'' blow-up of the manifold.
\end{abstract}

\maketitle

\section{Introduction} \label{sec: intro}

We develop the ``level set approach'' in the setting of closed three-manifolds with positive scalar curvature and extend the monotonicity formula involving the level sets of Green's function for the Laplacian on non-parabolic manifolds of Munteanu and Wang from {\cite{MunteanuWangComparisonPaper}}.
Instead of the Laplacian, we use a natural Schr\"{o}dinger operator with a constant, curvature-dependent potential, which acts as a comparison operator for the conformal Laplacian.
We will present two sharp monotonic quantities, for which the vanishing of each of their derivatives characterizes the round three-sphere.
Moreover, both quantities recover the Munteanu--Wang result in the limit of the curvature lower bound going to zero.
We will also discuss a way in which both formulae may be interpreted as the monotonicity of the ``enclosed mass'' of the sublevel sets of Green's function with respect to the ``conformal'' blow-up of the manifold.

\subsection{The first monotonicity formula.}
The monotonicity of the first quantity requires only{\footnote{Formally, the theorem also needs an assumption akin to the local integrability of the reciprocal of the gradient of Green's function, so that all integral terms are finite; we will discuss this slightly later.}} a positive lower bound on the scalar curvature, and takes the following form.

\begin{theorem} \label{intro-th: sharp monotonic quantity}
Let $(M, g)$ be a closed, connected Riemannian three-manifold with $\scal \geq 6k$ for some $k > 0$.
Fix a point $p \in M$ and let $G$ be Green's function{\footnote{Our convention is that, if $L$ is a positive-definite, second order differential operator on a closed three-manifold, then its Green's function $G$ with singularity at a point $p$ is normalized to solve the equation $LG = 4\pi \delta_p$, where $\delta_p$ is the Dirac distribution at $p$.}} for the operator $- \Delta + 3k/4$, with singularity at $p$.
Assume, in addition, that all regular level sets of $G$ are path-connected.

Then, for $s := G^{-1}$, we have{\footnote{In this paper, we will integrate both over level and sublevel/superlevel sets of continuous functions on a three-manifold.
The former integral is taken with respect to the two-dimensional Hausdorff measure, whereas the latter with respect to the three-dimensional Hausdorff measure.
We will suppress these measures from the notation.}},
\begin{equation} \label{intro-eq: sharp monotonic quantity}
    \frac{\dd}{\dd r} \left( \frac{1}{r^3} \int_{s = r} \|\nabla s\|^2 + \frac{3k}{2r^2} \int_{s \leq r} \|\nabla s\| + \frac{3k^2}{32 r^2} \int_{s \leq r} \frac{r^2 - s^2}{\|\nabla s\|} - \frac{4 \pi}{r} \right) \geq 0
\end{equation}
for almost every $r > 0$ in the range of $s$.
Moreover, if equality holds at some $r_0 > 0$, then $r_0 \leq 2/\sqrt{k}$ and $(M, g)$ is isometric to the three-sphere of constant curvature $k$.
\end{theorem}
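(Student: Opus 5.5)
The plan is to adapt the Munteanu--Wang argument to the shifted operator. First I would record the PDE for $s = G^{-1}$. Away from $p$ we have $\Delta G = \tfrac{3k}{4} G$, so
\begin{equation*}
    \Delta s = \frac{2\|\nabla s\|^2}{s} - \frac{3k}{4}\, s .
\end{equation*}
Near $p$, $G$ has the asymptotics $G \sim \dist(p,\cdot)^{-1}$. Hence $s \sim \dist(p,\cdot)$ and $\|\nabla s\| \to 1$ at $p$, and the bracketed quantity in \eqref{intro-eq: sharp monotonic quantity} tends to $0$ as $r \to 0$. I would then write each term as a function of $r$. By the divergence theorem and the co-area formula, $\int_{s = r}\|\nabla s\|^2$ is the flux of the vector field $\|\nabla s\|\nabla s$. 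Likewise $\int_{s\le r}\|\nabla s\|$ and $\int_{s\le r} (r^2-s^2)\|\nabla s\|^{-1}$ are co-area integrals whose $r$-derivatives are $\int_{s=r} 1$ and $2r\int_{s\le r}\|\nabla s\|^{-1}$. Away from critical values these are legitimate for a.e.\ $r$; the footnoted integrability of $\|\nabla s\|^{-1}$ makes them finite. Critical values of $s$ have measure zero by Sard-type results for this analytic-like elliptic solution.

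The core computation differentiates $r^{-3}\int_{s=r}\|\nabla s\|^2$. I would write $\frac{d}{dr}\int_{s=r}\|\nabla s\|^2 = \int_{s=r}\operatorname{div}(\|\nabla s\|\nabla s)/\|\nabla s\|$. I would then expand the integrand using the Bochner formula for $\|\nabla s\|$ on the regular level set $\Sigma_r$. The following facts are needed:
\begin{itemize}
    \item the mean curvature is $H = (\Delta s - \hess s(\nu,\nu))/\|\nabla s\|$;
    \item the traced Gauss equation gives $2\ric(\nu,\nu) = \scal - \scal_{\Sigma} + H^2 - |A|^2$;
    \item Kato/refined estimates hold for the traceless Hessian.
\end{itemize}
The ingredient that yields the sharp constant $-4\pi/r$ is Gauss--Bonnet. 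Since $\Sigma_r$ is a closed surface which is path-connected by hypothesis, $\int_{\Sigma_r}\scal_\Sigma = 4\pi\chi(\Sigma_r) \le 8\pi$. The curvature hypothesis $\scal \ge 6k$ enters through the $\scal$ term. Substituting $\Delta s$ produces lower-order terms involving $k r \|\nabla s\|$ and $k^2 r^2 \|\nabla s\|^{-1}$. I expect these to be exactly cancelled or absorbed by the derivatives of the second and third terms in \eqref{intro-eq: sharp monotonic quantity}; the coefficients $3k/2$ and $3k^2/32$ are presumably chosen for this. What remains should be a manifestly nonnegative integrand, $r^{-3}\int_{\Sigma_r}$ of
\begin{equation*}
    \frac{|\tracelessHess s|^2}{\|\nabla s\|} + (\scal - 6k)\|\nabla s\| + \text{(a square)},
\end{equation*}
suitably weighted. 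Tracking these $k$-dependent terms and checking that the completed square has the right sign is the step I expect to be the main obstacle. My first attempt would be to work in the conformal picture $\tilde g = G^2 g$-style (or with $s$-weighted traceless Hessian $\hess s - \frac{\Delta s}{3} g$ modified by the $s^{-1}\|\nabla s\|^2$ term) and verify the identity on the model sphere, where every inequality must be an equality.

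For rigidity, suppose equality holds at some $r_0$. Then on $\Sigma_{r_0}$, and by analyticity of the integrands in $r$ (or by a continuation argument) on a neighbourhood, the following hold:
\begin{itemize}
    \item the traceless Hessian term vanishes;
    \item $\scal = 6k$;
    \item $\Sigma_r$ is a sphere.
\end{itemize}
Vanishing of the traceless Hessian with the PDE makes $\|\nabla s\|$ constant on level sets. The metric is then a warped product $ds^2/\phi(s)^2 + \psi(s)^2 g_{\S^2}$, with $\psi$ forced to match the model ODE, so $g$ has constant curvature $k$ near $p$. By unique continuation for the ODE system and closedness, $(M,g)$ is the round sphere of curvature $k$. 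Finally, the explicit model Green's function of $-\Delta + 3k/4$ on this sphere, in terms of $\sn_k$, gives the maximal value of $s$ as $2/\sqrt{k}$. Hence $r_0 \le 2/\sqrt{k}$.
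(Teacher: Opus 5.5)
\textbf{Genuine gap in the central step.} The derivative of the quantity sees the level set $\{s=r\}$ only through $E'(r)=\int_{s=r}(\Delta s+\hess s(\nu,\nu))$. This is \emph{linear} in the second derivatives of $s$ and contains no third-order term to which a Bochner formula could be applied. So the derivative can never be rewritten, as you anticipate, as $r^{-3}\int_{\{s=r\}}$ of a nonnegative quadratic expression in the Hessian.

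The missing idea is to turn this boundary term into a bulk integral and compute a divergence in two ways. Take $V:=\|\nabla s\|^{-1}\nabla(\|\nabla s\|^2+\frac k4 s^2)$.
\begin{itemize}
\item Its flux through $\{s=r\}$ is $2\int_{s=r}\hess s(\nu,\nu)+\frac k2 rA(r)$, which can be expressed through $E'(r)$, $E(r)$ and $A(r)$.
\item Using $d(\|\nabla s\|^2+\frac k4 s^2)=\frac{1}{2s^2}\tracelessHess{s^2}(\nabla s^2,\cdot)$, the Bochner formula, and the traced Gauss equation on each regular level set $\Sigma$, one gets $\frac{\div V}{2\|\nabla s\|}=-\frac12\scal_\Sigma+\frac12(\scal-6k)+\frac{\|\!\tracelessHess{s^2}\|^2}{2\|\nabla s^2\|^2}$ plus explicit terms in $s$, $\|\nabla s\|$ and $\tracelessHess{s^2}(\nu,\nu)$.
\end{itemize}
Integrate $\div V$ over $\{s\le r\}$ once by the divergence theorem and once by the coarea formula with Gauss--Bonnet on every $\{s=t\}$, $t<r$. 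Then integrate the resulting $E'$ and $A'$ terms by parts. This yields
\[
\frac{\dd}{\dd r}\Big(\cdots\Big)=\frac{1}{r^3}\int_0^r\Big[2\pi\big(2-\chi(s=t)\big)+\int_{s=t}\Big(\frac12(\scal-6k)+\frac{\|\!\tracelessHess{s^2}\|^2}{2\|\nabla s^2\|^2}\Big)\Big]\dd t .
\]
Three consequences of this structure differ from what you expect:
\begin{itemize}
\item The derivative is a weighted integral over the whole sublevel set, so connectedness is needed for every $t<r$, not just $t=r$.
\item The coefficients $3k/2$ and $3k^2/32$ cancel bulk terms such as $3k\int_0^rA$ and $-\frac{3k^2}{16}\int_0^r t^2B$, not level-set terms.
\item The nonnegative quantity is the traceless Hessian of $s^2$, not of $s$. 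Already for $s=|x|$ in $\R^3$ one has $\tracelessHess s\neq 0$, so $\|\!\tracelessHess s\|^2$ cannot appear in an identity that is an equality on the model.
\end{itemize}

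\textbf{Two gaps in the rigidity argument.} First, with the bulk formula, equality at a single $r_0$ directly forces $\tracelessHess{s^2}=0$ and $\scal=6k$ on all of $\{s\le r_0\}$. Your appeal to analyticity in $r$ is unavailable for a merely smooth metric, and is not needed. From there, connectedness of $\{s<r_0\}\cup\{p\}$ and the asymptotics at $p$ give $\|\nabla s\|^2+\frac k4 s^2\equiv 1$ and $\hess{s^2}=(2-ks^2)g$. Hence $r_0\le 2/\sqrt k$ and $g$ is round on $\{s\le r_0\}$.

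Second, nothing constrains $g$ outside $\{s\le r_0\}$, so no ``unique continuation for the ODE system'' can propagate roundness to all of $M$. The paper uses a global input here, the Positive Mass Theorem:
\begin{itemize}
\item $G=(2\snk(\dist(p,\cdot)/2))^{-1}$ near $p$ forces the constant $A$ in $G=\dist(p,\cdot)^{-1}+A+O(\dist(p,\cdot))$ to vanish.
\item $2A$ is the ADM mass of the asymptotically flat blow-up $(M\setminus\{p\},G^4g)$, whose scalar curvature $G^{-4}(\scal-6k)$ is nonnegative.
\item Hence this blow-up is flat $\R^3$, so $\scal\equiv 6k$ and $(M,g)$ is the round sphere of curvature $k$.
\end{itemize}
Relatedly, your claim that the bracketed quantity tends to $0$ as $r\to 0$ is false in general. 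It tends to $-8\pi A=-4\pi m(M,p)$, which is exactly the quantity the Positive Mass Theorem controls.
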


To explain Theorem \ref{intro-th: sharp monotonic quantity}, let us start by considering the case of the three-sphere of curvature $k$.
If $\Delta$ is the non-positive Laplacian, the operator $-\Delta + 3k/4$ is the conformal Laplacian, and its Green's function $G$ may be explicitly computed as $G = (2 \snk (\dist(p, \cdot)/2))^{-1}$, where $\snk$ is the curvature-adapted sine function (cf.~{\cite[Lemma 2.1]{ManeaSGE}}); in particular, $s = 2 \snk(\dist(p, \cdot)/2)$.
In this case, one may then explicitly compute that
\[
\frac{1}{r^3} \int_{s = r} \|\nabla s\|^2 = \frac{4\pi}{r} \left(1 - \frac{k}{4} r^2 \right)^2, \qquad \frac{3k}{2r^2} \int_{s \leq r} \|\nabla s\| = 2 \pi k r - \frac{3 \pi k^2 r^3}{10}, \qquad \frac{3k^2}{32r^2} \int_{s \leq r} \frac{r^2 - s^2}{\|\nabla s\|} = \frac{\pi k^2 r^3}{20},
\]
and thus, for all $r \leq 2/\sqrt{k}$ (as the range of $s$ is $[0, 2/\sqrt{k}]$),
\begin{equation} \label{intro-eq: sharp monotonic quantity equality for the sphere}
    \frac{1}{r^3} \int_{s = r} \|\nabla s\|^2 + \frac{3k}{2r^2} \int_{s \leq r} \|\nabla s\| + \frac{3k^2}{32 r^2} \int_{s \leq r} \frac{r^2 - s^2}{\|\nabla s\|} = \frac{4 \pi}{r}.
\end{equation}

For a general closed three-manifold $(M, g)$ with $\scal \geq 6k$, the operator $- \Delta + 3k/4$ acts as a comparison operator for the conformal Laplacian, and it is equal to it when the scalar curvature is constant; this operator is also more suitable for computations due to the absence of pointwise curvature quantities in its potential term.
If, in addition, $\ric \geq 2kg$, the author previously proved (cf.~{\cite{ManeaSGE}}) a sharp gradient estimate for its Green's function, namely the inequality
\begin{equation} \label{intro-eq: sharp gradient estimate in positive ricci}
    \|\nabla s\|^2 + \frac{k}{4} s^2 \leq 1,
\end{equation}
with equality holding at \emph{some} point only when $(M, g)$ is isometric to the three-sphere of curvature $k$.
In this case, various sharp monotonic quantities involving the level and sublevel sets of $s \equiv G^{-1}$, analogous to those from Theorem \ref{intro-th: sharp monotonic quantity}, have also been investigated in the case of positive Ricci curvature, extending previous results of Colding (cf.~{\cite{ColdingNewMonotonicityFormulas}}) and Colding--Minicozzi (cf.~{\cite{ColdingMinicozziMonotonicityFormulas}}) in non-negative curvature.
From this point of view, Theorem \ref{intro-th: sharp monotonic quantity} is a positive-curvature adaptation of similar monotonicity formulae of Munteanu and Wang in non-negative scalar curvature (cf.~{\cite{MunteanuWangComparisonPaper}}); we will explain this relationship in the coming paragraphs.

To understand \eqref{intro-eq: sharp monotonic quantity}, the coarea formula allows us to rewrite
\[
\int_{s \leq r} \|\nabla s\| = \int_0^r \mathrm{area}(s = t) \dd t, \qquad \int_{s \leq r} \frac{r^2 - s^2}{\|\nabla s\|} = \int_0^r (r^2 - t^2) \left( \int_{s = t} \frac{1}{\|\nabla s\|^2} \right) \dd t,
\]
where $\mathrm{area}(s = t)$ denotes the two-dimensional Hausdorff measure of the level set $\{s = t\}$.
Thus, the monotonicity statement of Theorem \ref{intro-th: sharp monotonic quantity} may be written as
\[
\frac{\dd}{\dd r} \left( \frac{1}{r^3} \int_{s = r} \|\nabla s\|^2 + \frac{3k}{2r^2} \int_0^r \mathrm{area}(s = t) \dd t + \frac{3k^2}{32 r^2} \int_0^r (r^2 - t^2) \left( \int_{s = t} \frac{1}{\|\nabla s\|^2} \right) \dd t - \frac{4 \pi}{r} \right) \geq 0.
\]
Let us now discuss each integral term individually.
Observe first that each of them is non-negative.
The first integral term is just the energy of $s$ on its level sets, whilst the second term may be interpreted as the ``total area'' of the level sets of $s$.
The third term may be understood as a ``weighted total energy'' of the foliation of $M$ by the level sets of $s$.
Indeed, if $\{s = t\}$ is a regular level set of $s$, then there is some $\varepsilon > 0$ so that the flow of the vector field $V := \|\nabla s\|^{-2} \nabla s$ provides a diffeomorphism between $\{s = t\} \times (-\varepsilon, \varepsilon)$ and $\{t - \varepsilon < s < t + \varepsilon\} \subset M$.
In this neighbourhood, the metric $g$ takes the form $g = \|\nabla s\|^{-2} ds \otimes ds + g_s$, where $g_s$ denotes $g$ restricted to the level sets of $s$.
It follows that the term $\int_{s = t} \|\nabla s\|^{-2} = \int_{s = t} \|V\|^2$ is the ``energy'' of the level set $\{s = t\}$.
Finally, combined with \eqref{intro-eq: sharp monotonic quantity equality for the sphere}, the sum of these three integral terms represents an ``excess'' with respect to the corresponding ``comparison quantity'' on the sphere of curvature $k$.

\subsection{Proof and context discussion for Theorem \ref{intro-th: sharp monotonic quantity}.}
Together with the above, the idea behind Theorem \ref{intro-th: sharp monotonic quantity} comes from the quantitative expression of the corresponding derivative; we will see in Section \ref{sec: sharp monotonic quantity proof of monotonicity} that
\begin{equation} \label{intro-eq: quantitative derivative of sharp monotonic quantity}
    \begin{split}
        &\frac{\dd}{\dd r} \left( \frac{1}{r^3} \int_{s = r} \|\nabla s\|^2 + \frac{3k}{2r^2} \int_{s \leq r} \|\nabla s\| + \frac{3k^2}{32 r^2} \int_{s \leq r} \frac{r^2 - s^2}{\|\nabla s\|} - \frac{4 \pi}{r} \right) \\
        &= \frac{1}{r^3} \int_0^r \left[ 2\pi (2 - \chi(s = t)) + \int_{s = t} \left( \frac{1}{2} (\scal - 6k) + \frac{1}{2\|\nabla s^2\|^2} \|\!\tracelessHess{s^2} \|^2 \right) \right] \dd t,
    \end{split}
\end{equation}
where $\tracelessHess{s^2}$ denotes the traceless Hessian of $s^2$.
The assumption that the regular level sets of $G$ (and hence of $s$) are path-connected implies that $\chi(s = t) \leq 2$ for almost every $t$ in the range of $s$; moreover, $\scal \geq 6k$, and thus the right-hand side of \eqref{intro-eq: quantitative derivative of sharp monotonic quantity} is non-negative.
The rigidity statement is also immediate from the above computation: if the right-hand side above is zero for some $r > 0$, then $\tracelessHess{s^2} = 0$ on $\{s < r\}$; the rigidity of warped products combined with a Positive Mass Theorem argument for the ``conformal`` blow-up manifold $(M \setminus \{ p \}, G^4 g)$ then show that $(M, g)$ must be isometric to the three-sphere of constant curvature $k$.
The right-hand side of \eqref{intro-eq: quantitative derivative of sharp monotonic quantity} thus measures explicitly the ``geometric excess'' of $M$ with respect to the sphere of curvature $k$ from the point of view of the scalar curvature of $M$ and the geometry of the foliation by the level sets of $s$.

The proof of \eqref{intro-eq: quantitative derivative of sharp monotonic quantity} employs a rewriting of the Bochner formula for $\|\nabla s\|^2$ along the level sets of $s$ using the traced Gauss equation; this will essentially be an extension of the identity from {\cite[Proposition 3.6]{ManeaSGE}} that was used to prove the sharp gradient estimate for $s$ in positive Ricci curvature (cf.~\eqref{intro-eq: sharp gradient estimate in positive ricci}), and it is analogous to {\cite[Proposition 1.17]{CMGradientEsimatesScalarCurvature}}.
We will see in Section \ref{sec: sharp monotonic quantity proof of monotonicity} (cf.~Proposition \ref{prop: divergence on regular level sets for sharp monotonic quantity}) that if $\Sigma$ is a regular level set of $s$, then
\begin{equation} \label{intro-eq: bochner formula for sharp monotonic quantity}
    \begin{split}
        \frac{1}{2\|\nabla s\|} \div \left( \frac{1}{\|\nabla s\|} \nabla \left( \|\nabla s\|^2 + \frac{k}{4} s^2 \right) \right) &= - \frac{1}{2} \scal_\Sigma + \frac{1}{2} (\scal - 6k) + \frac{1}{2\|\nabla s^2\|^2} \|\!\tracelessHess{s^2}\|^2 \\
        &\quad \, + \frac{1}{s^2 \|\nabla s\|^2} \left( \|\nabla s\|^2 + \frac{k}{4} s^2 \right)^2 \\
        &\quad \, + \left( \frac{1}{2s^2} + \frac{k}{4\|\nabla s\|^2} \right) \tracelessHess{s^2} \left( \frac{\nabla s}{\|\nabla s\|}, \frac{\nabla s}{\|\nabla s\|} \right),
    \end{split}
\end{equation}
where $\scal_\Sigma$ denotes the scalar curvature of $\Sigma$ in the induced metric; in the limit as $k \downarrow 0$, we recover precisely the result from {\cite[Proposition 1.17]{CMGradientEsimatesScalarCurvature}}.
Theorem \ref{intro-th: sharp monotonic quantity} is now implied by computing the integral of $\div ( \|\nabla s\|^{-1} \nabla ( \|\nabla s\|^2 + ks^2/4 ))$ over the subset $\{s \leq r\}$ in two different ways: one by using the Divergence Theorem, and one by applying the coarea formula and \eqref{intro-eq: bochner formula for sharp monotonic quantity}.
The last two terms on the right-hand side of \eqref{intro-eq: bochner formula for sharp monotonic quantity} account for the integral terms over the sublevel sets of $s$ from the statement of Theorem \ref{intro-th: sharp monotonic quantity}.

There are two other topics that we must discuss related to the hypotheses of Theorem \ref{intro-th: sharp monotonic quantity}: the assumption of path-connectedness of the level sets of $G$, and (as mentioned earlier) the finiteness of each integral term from \eqref{intro-eq: sharp monotonic quantity}.
Before we proceed with this, let us first contextualize Theorem \ref{intro-th: sharp monotonic quantity}.
Monotonicity formulae play an important role in geometry and analysis because of their wide range of geometric implications.
Classical examples include the Bishop-Gromov relative volume comparison theorem for the monotonicity of the ratio of volumes of balls and Perelman's monotonicity formulae for the Ricci flow.

In the setting of monotonicity for Green's functions, Colding and Minicozzi (cf.~{\cite{ColdingNewMonotonicityFormulas, ColdingMinicozziMonotonicityFormulas}}) obtained monotonic quantities along the level sets of the minimal, positive Green's function for the Laplacian on non-parabolic manifolds with non-negative Ricci curvature, which were used, for instance, in the study of the uniqueness of tangent cones at infinity for Einstein manifolds (cf.~{\cite{CMUniquenessOfTangentCones}}).
Similar formulae were also obtained in the realm of potential theory (cf.~{\cite{MonotonicityInPotentialTheory, MonotonicityAndGeometricInequalitiesForHypersurfacesInNonNegativeRicci, MinkowskiIneqViaPotentialTheory}}) and for the $p$-Laplace operator (cf.~{\cite{MonotinicityPGreensFunction}}), and were used to prove various geometric inequalities for manifolds with non-negative Ricci curvature.

In recent years, monotonic quantities involving the level sets of functions satisfying special (elliptic) equations have been studied extensively to understand the geometry of three-manifolds with non-negative scalar curvature.
A key development in this direction was Stern's Bochner formula (cf.~{\cite{SternLevelSetScalarCurvature}}), which was used to obtain an integral inequality involving the scalar curvature of closed three-manifolds and harmonic maps to the circle.
Since then, other monotonic formulae along the level sets of harmonic functions have led to new results for non-compact three-manifolds with non-negative scalar curvature.
Of particular importance is the work of Munteanu and Wang (cf.~{\cite{MunteanuWangComparisonPaper}}), where they showed that if $\widetilde{G}$ is the minimal, positive Green's function for the Laplacian of a non-parabolic three-manifold with non-negative scalar curvature, then, for $\widetilde{s} := \widetilde{G}^{-1}$,
\begin{equation} \label{intro-eq: MW sharp monotonicity for greens function}
    \frac{\dd}{\dd r} \left( \frac{1}{r^3} \int_{\widetilde{s} = r} \|\nabla \widetilde{s}\|^2 - \frac{4\pi}{r} \right) \geq 0
\end{equation}
if $\widetilde{G} \to 0$ at infinity and the regular level sets of $\widetilde{G}$ are path-connected; moreover, equality holds at some $r > 0$ only if the subset $\{\widetilde{s} < r\}$ is isometric to a Euclidean ball of radius $r$.
It is now apparent that the result of Theorem \ref{intro-th: sharp monotonic quantity} specializes to the one from \eqref{intro-eq: MW sharp monotonicity for greens function} in the limit $k \downarrow 0$.
For this reason, we chose to express Theorem \ref{intro-th: sharp monotonic quantity} in this form, instead of, for instance, normalizing to $k = 1$.

Munteanu and Wang used \eqref{intro-eq: MW sharp monotonicity for greens function} to obtain various comparison and geometric inequalities on manifolds with a (possibly negative) scalar curvature lower bound (cf.~{\cite{MunteanuWangGeometryOfPSC, MunteanuWangBottomSpectrumComparison, MunteanuWangIntegralScalarCurvatureBound, MunteanuWangParabolicityInPSC}}), such as comparisons for the bottom of the spectrum, sharp integral bounds for the scalar curvature, and non-parabolicity of open manifolds with uniformly positive scalar curvature.
Colding and Minicozzi (cf.~{\cite{CMGradientEsimatesScalarCurvature}}) also recently obtained the ``averaged'' sharp gradient bound
\begin{equation} \label{intro-eq: averaged gradient estimate in scalar curvature}
    \int_{\widetilde{s} = r} \|\nabla \widetilde{s} \|^2 \leq 4 \pi r^2,
\end{equation}
generalizing the pointwise estimate $\|\nabla \widetilde{s}\| \leq 1$ previously obtained in {\cite{ColdingNewMonotonicityFormulas}} in non-negative curvature.

The ``level set approach'' from above has also been used in the realm of positive mass theorems.
For example, harmonic functions have been used to prove lower bounds for the ADM mass of (the exterior regions of) asymptotically flat three-manifolds in {\cite{BrayKazarasHarmonicFunctionsAndMass}}, and a modified notion of ``spacetime harmonic'' functions has also given a new proof of the Spacetime Positive Mass Theorem (cf.~{\cite{BrayKazarasSpacetimeHarmonicFunctionsAndMass, BrayKazarasKhuriPerspectives}}).
Similar monotonic quantities have also been employed to provide other new proofs of the Positive Mass Theorem (cf.~{\cite{PMTViaGreenFunction}}) and the Riemannian Penrose Inequality (cf.~{\cite{PenroseIneqViaGreenFunction}}), as well as show a Positive Mass Theorem for asymptotically hyperbolic manifolds (cf.~{\cite{PMTAsymptoticallyHypManifolds}}), a generalized ``$X$-Positive Mass Theorem'' (cf.~{\cite{XADMMassTheorem}}), and prove relationships between the ADM mass of asymptotically flat three-manifolds with boundary and the area and capacity of their boundary (cf.~{\cite{OronzioADMMassAreaCapacity, AreaCapacityIneqViaPotentialTheory}}).
Finally, more recently, for compact manifolds with boundary and positive scalar curvature, a one-parameter family of Positive Mass Theorems has been shown using $p$-Green's functions for the Laplace operator for $p \in (1, 3)$ (cf.~{\cite{PMTCompactPSCManifolds}}).
In this sense, our result from Theorem \ref{intro-th: sharp monotonic quantity} is a continuation of the ``level set approach'' programme, focusing on closed manifolds with positive scalar curvature.

\subsection{The path-connectedness of the level sets and finiteness of the quantity from \eqref{intro-eq: sharp monotonic quantity}.}
Let us now return to discussing the assumptions in Theorem \ref{intro-th: sharp monotonic quantity}.
All the previously described results in non-negative scalar curvature were obtained under the assumption of path-connectedness of the (regular) level sets of the functions in question.
For non-compact manifolds, this has been shown to follow from additional topological assumptions on the manifolds.
For instance, Munteanu and Wang proved in {\cite{MunteanuWangComparisonPaper}} that, in the setting of \eqref{intro-eq: MW sharp monotonicity for greens function}, if the manifold has only one end and vanishing first Betti number, then all level sets of $\widetilde{G}$ are path-connected.
For asymptotically flat manifolds with compact and minimal boundary (such as in the setting of {\cite{BrayKazarasHarmonicFunctionsAndMass}}), the connectedness of the level sets is implied essentially by the vanishing of the second relative homology group of the manifold.

In the setting of closed manifolds with positive scalar curvature from Theorem \ref{intro-th: sharp monotonic quantity}, the path-connectedness of the (regular) level sets of Green's function must be assumed separately, as no topological condition may ensure this property.
We will show in Appendix \ref{appendix-sec: connectedness of the level sets of greens functions} that every closed three-manifold which admits Riemannian metrics of positive scalar curvature also admits a sequence $(g_n)_{n \in \N}$ of metrics with scalar curvature bounded below by $6k$ for which Green's function for the operator $-\Delta_{g_n} + 3k/4$ has at least one level set with at least $n$ path-connected components.
This will be a consequence of the Gromov-Lawson-Schoen-Yau results regarding the existence of metrics of positive scalar curvature on connected sums of manifolds (cf.~{\cite{SchoenYauPSC, GromovLawsonPSC}}).
Concretely, we will show that, for two manifolds $X$ and $Y$ with positive scalar curvature, since the metric on the connected sum $X \# Y$ on the gluing region may be chosen to be isometric to a cylinder of the form $[0, \ell] \times \S^2_K$ where $\S^2_K$ denotes the two-sphere of curvature $K \geq 3k$ and $\ell > 0$, Green's function for the operator $- \Delta + 3k/4$ on $X \# Y$ with singularity at a point in $X$ decays exponentially in $\ell$ on the interior region of $Y$.

The conclusion of Theorem \ref{appendix-th: arbitrarily many connected components on every psc topology} then follows by applying this result to the connected sum of a manifold with $n$ three-spheres for each $n \geq 1$, as such surgery operations do not change the topology of the manifold.
As previously stated, this means that no topological condition may imply the connectedness of the (regular) level sets of such Green's functions.
Moreover, a geometric condition alone will also not guarantee this property, as even Green's functions of spherical manifolds may have disconnected level sets (such as $\S^3/Q_8$, cf.~Example \ref{appendix-example: non-connectedness on level sets on constant curvature spaces}).
Hence, one would hope that, to ensure the path-connectedness of the level sets of Green's function, we would need both a topological and a geometric assumption on a closed three-manifold with positive scalar curvature.
However, it is unclear what such assumptions one could take; for this reason, we decided to add the connectedness condition as an assumption in Theorem \ref{intro-th: sharp monotonic quantity}.

The last point of discussion about Theorem \ref{intro-th: sharp monotonic quantity} is the well-definedness of the monotonic quantity from its statement.
More precisely, the first two integral terms from \eqref{intro-eq: sharp monotonic quantity} are always finite and locally absolutely continuous for $r$ in the range of $s$, but the well-definedness of the third one requires some local integrability for $\|\nabla s\|^{-1}$.
The asymptotics of Green's function near its singularity imply that $\|\nabla s\|^{-1}$ is integrable near $p$, and thus Theorem \ref{intro-th: sharp monotonic quantity} is well-posed for $r > 0$ small enough.
However, a priori, $\|\nabla s\|^{-1}$ may not be integrable on all of $M$, and the corresponding term from \eqref{intro-eq: sharp monotonic quantity} may be infinite for some $r > 0$.

We do not have a general criterion for the finiteness of the third integral term.
The ``simple'' condition of imposing $\|\nabla s\|^{-1} \in L^1(M)$ as a hypothesis of Theorem \ref{intro-th: sharp monotonic quantity} is rather unnatural and too strong, as one may check explicitly that $\|\nabla s\|^{-1}$ is not integrable on $M$ when $k = 1$ and $(M, g) \simeq \R \P^3$ with the unit, round metric from the three-sphere.
However, for generic metrics with positive scalar curvature, $s$ is a Morse function on $M \setminus \{ p \}$, and hence $\|\nabla s\|^{-1}$ is integrable on $M$.
We will show this in Appendix \ref{appendix-sec: morse theory for greens function and integrability of third integral term}, where we will also present some divergence conditions and show a finiteness criterion for when the Riemannian manifold is real-analytic.
We conjecture that this integral term is always finite and locally absolutely continuous for $r$ in the range of $s$, but we do not have a proof of this in full generality.

\subsection{The second monotonicity formula.}
The above concludes the discussion about Theorem \ref{intro-th: sharp monotonic quantity}.
Let us now present the second monotonic quantity, which is expressed as follows.

\begin{theorem} \label{intro-th: alternative sharp monotonic quantity}
Let $(M, g)$ be a closed, connected Riemannian three-manifold with $\scal \geq 6k$ for some $k > 0$.
Fix a point $p \in M$ and let $G$ be Green's function for the operator $- \Delta + 3k/4$, with singularity at $p$.
Assume, in addition, that all regular level sets of $G$ are path-connected and that $G \geq \sqrt{k}/2$ on $M \setminus \{ p \}$.

Then, for $s := G^{-1}$, we have
\allowdisplaybreaks
\begin{align*}
    \frac{\dd}{\dd r} \bigg( \frac{1}{r^3} &\int_{s = r} \|\nabla s\|^2 + \int_{s \leq r} \frac{3k(2 + ks^2 - 6 r^{-2} s^2)}{s^2(4 - ks^2)^2} \|\nabla s\|^3 \\
    &+ \int_{s \leq r} \left( \frac{3k}{4} - \frac{3k(r^2 - s^2)(4 + 5k s^2)}{8r^2(4 - ks^2)} \right) \frac{\|\nabla s\|}{s^2} - \frac{4 \pi}{r} \bigg) \geq 0,
\end{align*}
for almost every $r \in (0, 2/\sqrt{k}]$.
Moreover, if equality holds at some $r < 2/\sqrt{k}$, then $(M, g)$ is isometric to the three-sphere of constant curvature $k$.
\end{theorem}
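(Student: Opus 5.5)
We will follow the same scheme as for Theorem \ref{intro-th: sharp monotonic quantity}: integrate a divergence identity over $\{s \leq r\}$ in two ways. The difference is that the divergence is now taken of a rescaled vector field, adapted so that the sublevel-set terms in the statement appear in place of those in \eqref{intro-eq: sharp monotonic quantity}.

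\textbf{Step 1 (the weighted divergence identity).} On the regular part of $\{s<2/\sqrt{k}\}$ we consider
\[
X := \frac{\phi(s)}{\|\nabla s\|}\,\nabla\Big(\|\nabla s\|^2+\tfrac{k}{4}s^2\Big) + \psi(s)\,\|\nabla s\|^{a}\,\nabla s ,
\]
with weights $\phi,\psi$ and exponent $a$ to be chosen. The hypothesis $G \geq \sqrt{k}/2$ means exactly $s \leq 2/\sqrt{k}$, so $4-ks^2\geq 0$ and weights such as $(4-ks^2)^{-1}$ make sense on $\{s<2/\sqrt k\}$. We expand $\div X$ using \eqref{intro-eq: bochner formula for sharp monotonic quantity} (Proposition \ref{prop: divergence on regular level sets for sharp monotonic quantity}) together with $\Delta s = 2\|\nabla s\|^2/s + \tfrac{3k}{4}s$, which follows from $\Delta G = \tfrac{3k}{4}G$ away from $p$.

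\textbf{Step 2 (absorbing the indefinite term).} The quantity to control is the cross term
\[
\Big(\tfrac{1}{2s^2}+\tfrac{k}{4\|\nabla s\|^2}\Big)\tracelessHess{s^2}(\nu,\nu), \qquad \nu := \nabla s/\|\nabla s\| .
\]
In Theorem \ref{intro-th: sharp monotonic quantity} this term was integrated exactly over sublevel sets, which produced the third integral there. Here we instead complete a square against $\tfrac{1}{2\|\nabla s^2\|^2}\|\tracelessHess{s^2}\|^2$, using $|\tracelessHess{}(\nu,\nu)|^2 \leq \tfrac23\|\tracelessHess{}\|^2$. Alternatively, we rewrite $\tracelessHess{s^2}(\nu,\nu)$ as a normal derivative of $\|\nabla s\|^2$ minus trace terms, and absorb it into the weighted divergence. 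This second route is what the weights $\phi,\psi$ are for. The leftover terms are polynomial in $\|\nabla s\|$ with coefficients rational in $s$, and they should be exactly the integrands
\[
\frac{3k(2+ks^2-6r^{-2}s^2)}{s^2(4-ks^2)^2}\|\nabla s\|^3
\quad\text{and}\quad
\Big(\tfrac{3k}{4}-\dots\Big)\frac{\|\nabla s\|}{s^2}
\]
appearing in the statement. The $r$-dependence ($r^{-2}$ and $r^2-s^2$) enters from integrating the $t$-dependent coefficients against $\int_0^r\cdot\,\dd t$ and then applying Fubini/coarea. We will fix $\phi,\psi$ by matching these coefficients. A consistency check is that every term vanishes identically on the round sphere, where $\|\nabla s\|^2 = 1-\tfrac k4 s^2$, and that the $k\downarrow0$ limit recovers \eqref{intro-eq: MW sharp monotonicity for greens function}. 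This algebraic step is the main obstacle: the remainder must be nonnegative modulo $\scal-6k$, and this may require $s\le 2/\sqrt k$ in an essential way.

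\textbf{Step 3 (integration and rigidity).} Integrating over $\{s\le r\}$ gives, via the Divergence Theorem, the boundary term $\tfrac{\dd}{\dd r}$ of $r^{-3}\int_{s=r}\|\nabla s\|^2$ plus lower-order terms. Via the coarea formula, Gauss--Bonnet on each level set $\int_{s=t}\scal_\Sigma = 4\pi\chi(s=t)$, and path-connectedness (so $\chi \le 2$), it gives a derivative that is at least $r^{-3}\int_0^r[2\pi(2-\chi)+\int_{s=t}(\tfrac12(\scal-6k)+\text{squares})]\,\dd t \ge 0$. To justify this we handle critical values exactly as in Section \ref{sec: sharp monotonic quantity proof of monotonicity}: excise small neighbourhoods of critical points and use Sard, together with the expansion $s\sim \dist(p,\cdot)$ near $p$. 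The terms here involve $\|\nabla s\|^{3}$ and $\|\nabla s\|$ rather than $\|\nabla s\|^{-1}$, so no extra integrability hypothesis is needed; this is why the formula holds for all $r\le 2/\sqrt k$. For rigidity, if the derivative vanishes at some $r<2/\sqrt k$, then $\tracelessHess{s^2}=0$ and $\scal=6k$ on $\{s<r\}$. As in Theorem \ref{intro-th: sharp monotonic quantity}, the warped-product rigidity combined with the Positive Mass Theorem applied to $(M\setminus\{p\},G^4g)$ then yields that $(M,g)$ is isometric to the round sphere of curvature $k$.
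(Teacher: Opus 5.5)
There is a genuine gap, and it is exactly the step you flag as ``the main obstacle''. Step 2 is the entire content of the theorem, and you only assert that $\phi,\psi,a$ can be fixed by matching coefficients. Neither route, as you describe it, produces the statement. Write $T:=\tracelessHess{s^2}(\nu,\nu)$.

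The homogeneous completion via $T^2\le\frac23\|\tracelessHess{s^2}\|^2$ is not sharp. On the round sphere $T=0$, but the square you would discard equals $\frac{(\|\nabla s\|^2+ks^2/2)^2}{3s^2\|\nabla s\|^2}>0$, so rigidity is lost (and a term $\int_{s=t}\|\nabla s\|^{-2}$ still survives).

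Integrating $T$ exactly is precisely the proof of Theorem \ref{intro-th: sharp monotonic quantity}, which leaves $-\frac{3k^2}{16}t^2\int_{s=t}\|\nabla s\|^{-2}$. A weight $\phi(s)$ only multiplies that level-set identity by $\phi(t)$. The field $\psi(s)\|\nabla s\|^a\nabla s$ only adds the coarea tautology $\int_{s\le r}\div(\psi(s)\|\nabla s\|^a\nabla s)=\psi(r)\int_{s=r}\|\nabla s\|^{a+1}$. So as long as the remainder is $\frac{1}{2\|\nabla s^2\|^2}\|\tracelessHess{s^2}\|^2$, or a homogeneous lower bound for it, you land on the quantity of Theorem \ref{intro-th: sharp monotonic quantity}, not the one in the statement.

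The missing idea is to change the remainder by completing the square \emph{inhomogeneously} around the model relation. The paper does this in terms of $G$:
\begin{itemize}
\item The Bochner formula for $\|\nabla G\|$, the traced Gauss equation and the sharp Kato identity (Lemma \ref{lemma: sharp kato in dimension three}) isolate $\frac{3}{4\|\nabla G\|}(\Delta G-G_{11})^2$.
\item On the sphere $\Delta G-G_{11}=-2G^{-1}\|\nabla G\|^2+kG/2$. Using $\|\nabla G\|^2+kG^2/4=G^4$, this is rewritten as $f(G)\|\nabla G\|^2$ with $f(G)=\frac{k-2G^2}{G(G^2-k/4)}$.
\item Completing the square around this expression, which is homogeneous of degree two in $\|\nabla G\|$, makes every cross term a multiple of $\|\nabla G\|(\Delta G-G_{11})$ or $\|\nabla G\|^3$. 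These integrate over level sets to $E'$, $A$ and $E$ only.
\end{itemize}
This is what eliminates the $\|\nabla s\|^{-1}$-type terms, and it is where $G>\sqrt{k}/2$ on regular level sets is actually used. One then integrates $\Delta(\|\nabla G\|/G^2)$ over $\{G\ge r\}$ and chooses $u_1,u_2$. In your variables, with $Q:=\|\nabla s\|^2+\frac k4 s^2$, the normal part of the remainder becomes $\frac{3}{4\|\nabla s^2\|^2}\bigl(T-\frac{4ks^2}{4-ks^2}(Q-1)\bigr)^2$ instead of $\frac{3}{4\|\nabla s^2\|^2}T^2$. The tangential and umbilic parts are unchanged.

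This also invalidates your rigidity step: vanishing of the derivative does not give $\tracelessHess{s^2}=0$. It gives $\scal=6k$, umbilic level sets, $\tracelessHess{s^2}(\nu)^\top=0$ and $T=\frac{4ks^2}{4-ks^2}(Q-1)$. Once the tangential part vanishes, $dQ=\frac{T}{s}\,ds$, so the last condition says $d\bigl((4-ks^2)^2(Q-1)\bigr)=0$. Only after using connectedness of $\{s<r\}$ and $Q\to1$ at $p$ do you get $Q\equiv1$, hence $T=0$ and $\hess{s^2}=(2-ks^2)g$. At that point the warped-product and Positive Mass argument applies.

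Two smaller points. First, $\Delta s=2\|\nabla s\|^2/s-\frac{3k}{4}s$; your sign is wrong. Second, your claim that no integrability of $\|\nabla s\|^{-1}$ is needed is not justified for your field $\phi(s)\|\nabla s\|^{-1}\nabla Q$. By Proposition \ref{prop: divergence on regular level sets for sharp monotonic quantity}, its divergence contains terms of order $\|\nabla s\|^{-1}$ near critical points, exactly as in Theorem \ref{intro-th: sharp monotonic quantity}.
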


As for Theorem \ref{intro-th: sharp monotonic quantity}, we will show a quantitative version of Theorem \ref{intro-th: alternative sharp monotonic quantity} (similar to \eqref{intro-eq: quantitative derivative of sharp monotonic quantity}), which will prove that the derivative is zero if $(M, g)$ is isometric to the three-sphere of curvature $k$.
However, it seems simpler to show this latter property directly from this quantitative computation of the derivative rather than by a concrete computation of each integral term (as in \eqref{intro-eq: sharp monotonic quantity equality for the sphere}); additionally, Theorem \ref{intro-th: alternative sharp monotonic quantity} again recovers the Munteanu--Wang theorem (cf.~{\cite{MunteanuWangComparisonPaper}} and \eqref{intro-eq: MW sharp monotonicity for greens function}) in the limit as $k \downarrow 0$.

The idea of the proof of Theorem \ref{intro-th: alternative sharp monotonic quantity} is analogous to the argument presented in \eqref{intro-eq: bochner formula for sharp monotonic quantity}: rewriting the Bochner formula for $\|\nabla s\|^2$ using the traced Gauss equation along the regular level sets of $s$.
However, for this theorem, we will work entirely in terms of $G$ instead of $s$; this way, it will be more apparent how the additional assumption of the curvature-dependent lower bound for $G$ is used (cf.~Proposition \ref{prop: sharp bochner identity via G for alternative monotonic quantity}).
The computations will be more involved, but the argument will be more in the spirit of that of Munteanu and Wang from {\cite{MunteanuWangComparisonPaper}}.

The motivation behind Theorem \ref{intro-th: alternative sharp monotonic quantity} is the following.
We described earlier that, due to the flexibility of positive scalar curvature given by the Gromov-Lawson-Schoen-Yau surgery results, the connected sum of two closed manifolds with positive scalar curvature also admits positive scalar curvature, with the metric on the gluing region being isometric to a two-sphere cylinder of arbitrarily long length; this is one of the reasons why it is difficult to control the geometry of positive scalar curvature.
From the point of view of Green's functions, as mentioned before, Appendix \ref{appendix-sec: connectedness of the level sets of greens functions} shows that Green's function for our comparison operator converges to zero on such cylinders.
Thus, imposing a lower bound on Green's function is one way to prevent such long cylinders from existing in the manifold.
This also induces further geometric restrictions on $(M, g)$; for instance, testing the Green's function equation with a non-zero constant function gives the volume upper bound $3 k^{3/2} \vol(M, g) \leq 32 \pi$.

The curvature-dependent lower bound $G \geq \sqrt{k}/2$ from the statement of Theorem \ref{intro-th: alternative sharp monotonic quantity} is always satisfied if we instead assume the stronger curvature lower bound $\ric \geq 2k g$ (cf.~{\cite[Corollary 2.3]{ManeaSGE}}).
Thus, if such a lower bound holds, Theorem \ref{intro-th: alternative sharp monotonic quantity} provides an additional monotonic quantity to that from Theorem \ref{intro-th: sharp monotonic quantity}.
Compared to the quantity from Theorem \ref{intro-th: sharp monotonic quantity}, all integral terms from the statement of Theorem \ref{intro-th: alternative sharp monotonic quantity} are finite and locally absolutely continuous for $r$ in the range of $s$ (this is proved in the second part of Appendix \ref{appendix-sec: integration over the (super)level sets of greens function}); at the same time, it is not clear if the latter two are always non-negative.
However, the hypothesis that the (regular) level sets of $G$ are path-connected must still be imposed, as even in this case, spherical manifolds may still have Green's functions with disconnected level sets (cf.~Example \ref{appendix-example: non-connectedness on level sets on constant curvature spaces}).

\subsection{The enclosed mass point of view.}
Lastly, let us end this section by describing how both Theorems \ref{intro-th: sharp monotonic quantity} and \ref{intro-th: alternative sharp monotonic quantity} may be viewed as the monotonicity of the ``enclosed mass'' of the superlevel sets of $s$ in the ``conformal'' blow-up $(M \setminus \{p\}, G^4 g)$.
Let us give the explanation for this for Theorem \ref{intro-th: sharp monotonic quantity}, as the discussion for Theorem \ref{intro-th: alternative sharp monotonic quantity} is analogous; this will be the content of Section \ref{sec: sharp monotonic quantity applications}.

By the formula for the curvature under a conformal change of the metric, the scalar curvature of the manifold $(M \setminus \{p\}, G^4 g)$ is equal to $G^{-4}(\scal - 6k)$; this is non-negative by assumption, and equal to zero if and only if $\scal \equiv 6k$.
Since the dimension of $M$ is equal to three, the arguments from {\cite[Lemmas 6.4 and Theorem 6.5]{LeeParkerYamabeProblem}} may readily be adapted to our Green's function and show that $G$ has the asymptotic expansion $G = \dist(p, \cdot)^{-1} + A + O(\dist(p, \cdot))$ for a constant $A \in \R$ and that the manifold $(M \setminus \{p\}, G^4 g)$ is complete and asymptotically flat.
{\cite[Lemma 9.7]{LeeParkerYamabeProblem}} then shows that $A$ is equal to half the ADM mass of $(M \setminus \{p\}, G^4 g)$, which we denote by $m(M, p)$.
The Positive Mass Theorem tells us that $m(M, p) \geq 0$, and $m(M, p) = 0$ if and only if $(M \setminus \{p\}, G^4 g)$ is isometric to $\R^3$, which in turn holds if and only if $(M, g)$ is isometric to the three-sphere of curvature $k$.

The aforementioned asymptotics of $G$ near $p$ then imply that, as $r \downarrow 0$,
\[
\frac{1}{r^3} \int_{s = r} \|\nabla s\|^2 = \frac{4 \pi}{r} - 4 \pi m(M, p) + O(r)
\]
and that
\[
\lim_{r \downarrow 0} \left( \frac{3k}{2r^2} \int_{s \leq r} \|\nabla s\| \right) = \lim_{r \downarrow 0} \left( \frac{3k^2}{32r^2} \int_{s \leq r} \frac{r^2 - s^2}{\|\nabla s\|} \right) = 0.
\]
It then follows that
\begin{equation} \label{intro-eq: asymptotics of sharp monotonic quantity near the pole gives the mass}
    - \frac{1}{4\pi} \lim_{r \downarrow 0} \left( \frac{1}{r^3} \int_{s = r} \|\nabla s\|^2 + \frac{3k}{2r^2} \int_{s \leq r} \|\nabla s\| + \frac{3k^2}{32 r^2} \int_{s \leq r} \frac{r^2 - s^2}{\|\nabla s\|} - \frac{4 \pi}{r} \right) = m(M, p).
\end{equation}
Thus, up to a constant, the quantity from \eqref{intro-eq: sharp monotonic quantity} may be interpreted as the ``enclosed mass'' (or ``localized mass'') of the ``interior region'' $\{s \geq r\}$, and Theorem \ref{intro-th: sharp monotonic quantity} as the monotonicity of this enclosed mass.
Furthermore, we can integrate \eqref{intro-eq: quantitative derivative of sharp monotonic quantity} on an interval containing zero to obtain an integral formula for the difference between $m(M, p)$ and this enclosed mass (cf.~\eqref{align: mass and quasilocal mass difference via sharp monotonic quantity}).
A similar point of view may also be employed for Theorem \ref{intro-th: alternative sharp monotonic quantity}.

\subsection{Organization of the paper.} In Sections \ref{sec: sharp monotonic quantity proof of monotonicity} and \ref{sec: alternative monotonic quantity proof of monotonicity}, we provide the proofs of Theorems \ref{intro-th: sharp monotonic quantity} and \ref{intro-th: alternative sharp monotonic quantity}, respectively.
Section \ref{sec: sharp monotonic quantity applications} is dedicated to explaining the enclosed mass point of view for both theorems.
In Appendix \ref{appendix-sec: integration over the (super)level sets of greens function}, we discuss the asymptotics of Green's function near its singularity and prove that the integral terms from \eqref{intro-eq: sharp monotonic quantity} in Theorem \ref{intro-th: sharp monotonic quantity} are locally absolutely continuous when finite, and that those in Theorem \ref{intro-th: alternative sharp monotonic quantity} are always finite and locally absolutely continuous.
Appendix \ref{appendix-sec: connectedness of the level sets of greens functions} contains the discussion about the decay of Green's function on long two-sphere cylinders and the non-path-connectedness of its level sets.
Lastly, in Appendix \ref{appendix-sec: morse theory for greens function and integrability of third integral term}, we discuss the finiteness of the third integral term from Theorem \ref{intro-th: sharp monotonic quantity}, presenting a few divergence and convergence criteria and also examining the case of real-analytic metrics.

\textbf{Acknowledgements}. I would like to thank my advisor, Tobias Colding, for suggesting that I pursue this project and for many valuable discussions.
I thank Ovidiu Munteanu for an insightful conversation about {\cite{MunteanuWangComparisonPaper}}.
I am also grateful to Dongyeong Ko and Tristan Ozuch for numerous valuable discussions about the geometry of positive scalar curvature and the results of this manuscript, and to Matthew Gursky for suggesting that one may stratify the critical points by the rank of the Hessian.
Lastly, I thank Tobias Colding, Dongeyong Ko, and Tristan Ozuch for reading a preliminary version of this manuscript and for their helpful comments and suggestions.
During this project, I was also partially supported by NSF DMS Grant 2405393.
\section{The proof of Theorem \ref{intro-th: sharp monotonic quantity}} \label{sec: sharp monotonic quantity proof of monotonicity}

In this section, we prove Theorem \ref{intro-th: sharp monotonic quantity}.
We will assume that all integral terms in \eqref{intro-eq: sharp monotonic quantity} are finite; the proof that the quantity is locally absolutely continuous is provided in Appendix \ref{appendix-sec: integration over the (super)level sets of greens function}.
Throughout the rest of this section, let $(M^3, g)$ be a closed, connected Riemannian three-manifold with $\scal \geq 6k$ for some $k > 0$, fix a point $p \in M$, and let $G$ be Green's function for the operator $- \Delta + 3k/4$, with singularity at $p$.
By the strong maximum principle, $G$ is positive on $M \setminus \{ p \}$.
Assume furthermore that all level sets of $G$ are path-connected, and put $s := G^{-1}$.

As mentioned in Section \ref{sec: intro}, the proof of Theorem \ref{intro-th: sharp monotonic quantity} relies on rewriting the Bochner formula for $\|\nabla s\|^2$ using the traced Gauss equation along the regular level sets of $s$ to obtain \eqref{intro-eq: bochner formula for sharp monotonic quantity}.
The main ingredient is the following identity, which was used to prove a sharp gradient estimate for $s$ in {\cite[Theorem 1.1]{ManeaSGE}}.
This identity has already been proved in {\cite[Proposition 3.6]{ManeaSGE}}, but, for convenience, we also describe the argument below.

\begin{proposition} \label{prop: drift bochner formula for sharp gradient quantity}
The following identity holds on $M \setminus \{p\}$:
\begin{equation} \label{eq: drift bochner formula for sharp gradient quantity}
   \Delta \left( \|\nabla s\|^2 + \frac{k}{4} s^2 \right) - \frac{1}{2s^4} \tracelessHess{s^2}(\nabla s^2, \nabla s^2) = \frac{1}{2s^2} \left( \|\!\tracelessHess{s^2}\|^2 + \ric(\nabla s^2, \nabla s^2) - 2k \|\nabla s^2\|^2 \right),
\end{equation}
where $\tracelessHess{s^2}$ denotes the traceless Hessian of $s^2$.
\end{proposition}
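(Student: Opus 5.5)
The plan is a direct computation: rewrite everything in terms of $u := s^2$, apply the Bochner formula to $u$, and split the Hessian of $u$ into its trace and traceless parts. The first step is to record the equation satisfied by $u$. On $M \setminus \{p\}$ we have $\Delta G = \frac{3k}{4} G$, and $s = G^{-1}$ gives $\nabla s = -G^{-2}\nabla G$ and
\[
\Delta s = \frac{2\|\nabla s\|^2}{s} - \frac{3k}{4} s .
\]
Hence $\Delta s^2 = 2s\Delta s + 2\|\nabla s\|^2 = 6\|\nabla s\|^2 - \frac{3k}{2}s^2$. Using $\|\nabla u\|^2 = 4s^2\|\nabla s\|^2$, this becomes
\[
\Delta u = \frac{3}{2}\frac{\|\nabla u\|^2}{u} - \frac{3k}{2} u .
\]
In these variables the quantity to be differentiated is
\[
\|\nabla s\|^2 + \frac{k}{4}s^2 = \frac{\|\nabla u\|^2}{4u} + \frac{k}{4}u .
\]

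Next, I expand $\Delta\big(\frac{\|\nabla u\|^2}{4u}\big)$ using the product rule. This produces four pieces: $\frac{1}{4u}\Delta\|\nabla u\|^2$, the cross term $-\frac{1}{2u^2}\langle \nabla\|\nabla u\|^2, \nabla u\rangle = -\frac{1}{u^2}\hess u(\nabla u,\nabla u)$, the term $\frac{\|\nabla u\|^4}{2u^3}$, and $-\frac{\|\nabla u\|^2}{4u^2}\Delta u$. For $\Delta\|\nabla u\|^2$ I use the Bochner formula
\[
\tfrac12\Delta\|\nabla u\|^2 = \|\hess u\|^2 + \langle \nabla u, \nabla\Delta u\rangle + \ric(\nabla u,\nabla u).
\]
Differentiating the equation for $\Delta u$ gives
\[
\langle \nabla u,\nabla \Delta u\rangle = \frac{3}{u}\hess u(\nabla u,\nabla u) - \frac{3}{2}\frac{\|\nabla u\|^4}{u^2} - \frac{3k}{2}\|\nabla u\|^2 .
\]

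I then decompose $\|\hess u\|^2 = \|\tracelessHess u\|^2 + \frac13(\Delta u)^2$ and $\hess u(\nabla u,\nabla u) = \tracelessHess u(\nabla u,\nabla u) + \frac13\Delta u\,\|\nabla u\|^2$, and substitute $\Delta u$ from the equation everywhere. The $\tracelessHess u(\nabla u,\nabla u)$ terms should combine with coefficient $\frac{1}{2u}\cdot 3\cdot\frac1u - \frac{1}{u^2} = \frac{1}{2u^2}$, which matches the term $\frac{1}{2s^4}\tracelessHess{s^2}(\nabla s^2,\nabla s^2)$ moved to the left-hand side. The term $\|\tracelessHess u\|^2$ appears with coefficient $\frac{1}{2u}$, and so does $\ric(\nabla u,\nabla u)$, both as in \eqref{eq: drift bochner formula for sharp gradient quantity}.

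What remains is a polynomial in $\|\nabla u\|^2$, $u$ and $k$ of the shape
\[
a\,\frac{\|\nabla u\|^4}{u^3} + b\,k\,\frac{\|\nabla u\|^2}{u} + c\,k^2 u .
\]
Here the $\frac{k}{4}\Delta u$ term contributes $\frac{3k}{8}\frac{\|\nabla u\|^2}{u} - \frac{3k^2}{8}u$. The main obstacle is purely bookkeeping: checking that $a = c = 0$ and $b = -1$, so that exactly $-\frac{k}{u}\|\nabla u\|^2 = \frac{1}{2u}\big(-2k\|\nabla u\|^2\big)$ survives. This is where the specific potential $3k/4$ and the choice of the combination $\|\nabla s\|^2 + \frac{k}{4}s^2$ are used. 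I would verify it by collecting each homogeneity class separately, and cross-check on the round sphere of curvature $k$. There $\tracelessHess{s^2} = 0$ and $\ric = 2kg$, so both sides must vanish, consistent with $\|\nabla s\|^2 + \frac{k}{4}s^2 \equiv 1$.
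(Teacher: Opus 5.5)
Your argument is correct, and the bookkeeping you leave open does close. After writing $\hess{u} = \tracelessHess{u} + \frac{\Delta u}{3}g$, the leftover terms are
\[
\frac{(\Delta u)^2}{6u} + \Big(\frac{k}{4} - \frac{\|\nabla u\|^2}{12u^2}\Big)\Delta u - \frac{\|\nabla u\|^4}{4u^3} - \frac{3k\|\nabla u\|^2}{4u}.
\]
Substituting $\Delta u = \frac{3\|\nabla u\|^2}{2u} - \frac{3k}{2}u$ gives exactly $-\frac{k}{u}\|\nabla u\|^2$, so $a = c = 0$ and $b = -1$. Your sphere check would in fact suffice on its own. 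There $\|\nabla u\|^2 = 4u - ku^2$ with $u$ ranging over an interval, and matching the coefficients of $u^{-1}$, $1$, $u$ forces exactly these values.

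Your route is to apply the Bochner formula to $\|\nabla s^2\|^2$ and rewrite everything in $s^2$. The paper attributes this route to its earlier reference; the proof given here is organised differently. It first establishes the first-order identity
\[
d\Big(\|\nabla s\|^2 + \frac{k}{4}s^2\Big) = \frac{1}{2s^2}\tracelessHess{s^2}(\nabla s^2, \cdot).
\]
It then takes the divergence of this identity, using $\div \tracelessHess{s^2} = \frac{2}{3}d\Delta s^2 + \ric(\nabla s^2,\cdot)$ to play the role of the Bochner formula. The only remaining computation is then $\frac{2}{3}\langle \nabla\Delta s^2, \nabla s^2\rangle = \frac{2}{s^2}\tracelessHess{s^2}(\nabla s^2,\nabla s^2) - 2k\|\nabla s^2\|^2$.

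Most of the cancellation you track by hand is thus absorbed at first order. More importantly, the gradient identity is reused later in the paper. It produces the formula for $\|\nabla\|\nabla s\|\|^2$ in the level-set divergence identity. In the rigidity argument, it turns $\tracelessHess{s^2} = 0$ into local constancy of $\|\nabla s\|^2 + \frac{k}{4}s^2$. Your version is self-contained and purely mechanical, but it does not isolate this first-order structure.
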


\begin{proof}
One could directly apply the Bochner formula for $\|\nabla s\|^2$ and rewrite every term using $s^2$; this was the approach taken in {\cite[Proposition 3.6]{ManeaSGE}}.
We will provide a slightly different proof; we will essentially proceed with the same computations, but we will write them in a manner that will also be useful for the proof of Theorem \ref{intro-th: sharp monotonic quantity}.

The first observation is that
\begin{equation} \label{eq: Delta s^2}
    \Delta s^2 = 6 \|\nabla s\|^2 - \frac{3k}{2} s^2.
\end{equation}
This follows by a direct computation using the chain rule and the identity $s \equiv G^{-1}$.
We now claim that
\begin{equation} \label{eq: gradient of sharp gradient quantity is traceless hessian}
    d \left( \|\nabla s\|^2 + \frac{k}{4} s^2 \right) = \frac{1}{2s^2} \tracelessHess{s^2} (\nabla s^2, \cdot).
\end{equation}
To see this, we may just compute, using the product rule, that $d \|\nabla s\|^2 = 2 \hess{s}(\nabla s, \cdot)$ and $\hess{s^2} = 2s \hess{s} + 2 ds \otimes ds$, so
\allowdisplaybreaks
\begin{align*}
    d \left( \|\nabla s\|^2 + \frac{k}{4} s^2 \right) &= 2 \hess{s}(\nabla s, \cdot) + \frac{ks}{2} ds \\
    &= 2 \left( \frac{1}{2s} \hess{s^2} (\nabla s, \cdot) - \frac{1}{s} \|\nabla s\|^2 ds \right) + \frac{ks}{2} ds \\
    &= \frac{1}{s} \hess{s^2}(\nabla s, \cdot) - \frac{2}{s} \left( \|\nabla s\|^2 - \frac{k}{4} s^2 \right) ds.
\end{align*}
The proof of the claim is now finished by combining the above equation with \eqref{eq: Delta s^2} and the identity $\hess{s}^2 = \tracelessHess{s^2} + (\Delta s^2/3)g$.

With the above, we are now ready to prove \eqref{eq: drift bochner formula for sharp gradient quantity}.
On the one hand, taking divergence in \eqref{eq: gradient of sharp gradient quantity is traceless hessian} and using that $\div \tracelessHess{s^2} = (2/3) d \Delta s^2 + \ric(\nabla s^2, \cdot)$, the product rule gives us the Bochner formula
\begin{equation} \label{eq: bochner formula with ricci term}
    \Delta \left( \|\nabla s\|^2 + \frac{k}{4} s^2 \right) = \frac{1}{2s^2} \left( \|\!\tracelessHess{s^2}\|^2 + \frac{2}{3} \langle \nabla \Delta s^2, \nabla s^2 \rangle + \ric(\nabla s^2, \nabla s^2) \right) - \frac{1}{2s^4} \tracelessHess{s^2}(\nabla s^2, \nabla s^2).
\end{equation}
On the other hand, differentiating \eqref{eq: Delta s^2} and recalling that $d \|\nabla s\|^2 = 2 \hess{s}(\nabla s, \cdot)$, we get that
\pagebreak
\allowdisplaybreaks
\begin{align*}
    \frac{2}{3} \langle \nabla \Delta s^2, \nabla s^2 \rangle &= 4 \langle \nabla \|\nabla s\|^2, \nabla s^2 \rangle - k \|\nabla s^2\|^2 \\
    &= 8 \hess{s}(\nabla s, \nabla s^2) - k \|\nabla s^2\|^2 \\
    &= \frac{4}{s} \hess{s^2}(\nabla s, \nabla s^2) - 16 \|\nabla s\|^4 - k \|\nabla s^2\|^2 \\
    &= \frac{2}{s^2} \tracelessHess{s^2}(\nabla s^2, \nabla s^2) - 2k \|\nabla s^2\|^2,
\end{align*}
where the last identity follows from \eqref{eq: Delta s^2} and the product rule.
Combining this last equation with \eqref{eq: bochner formula with ricci term}, we may conclude the desired result.
\end{proof}

The next step in proving \eqref{intro-eq: bochner formula for sharp monotonic quantity} is to use the traced Gauss equation on the regular level sets of $s$ to rewrite the Ricci curvature term from \eqref{eq: drift bochner formula for sharp gradient quantity} using only the scalar curvature and $\tracelessHess{s^2}$.
Since the second fundamental form of the level sets of $s$ may be written in terms of the restricted Hessian of $s^2$, we first record the following simple lemma.

\begin{lemma} \label{lemma: norm of hessian of s squared along the regular level sets of s}
If $\Sigma$ is a regular level set of $s$, then
\begin{equation} \label{eq: norm of hessian of s squared along the regular level sets of s}
    \|\!\tracelessHess{s^2}\|^2 = \frac{1}{2} (\tracelessHess{s^2}(\nu, \nu))^2 + \|\!\tracelessHess{s^2}(\nu)^\top\|^2 + \|\!\tracelessHess{s^2}(\nu, \cdot)\|^2 + \|(\tracelessHess{s^2}|_\Sigma)^\circ\|^2,
\end{equation}
where $\nu := \|\nabla s^2\|^{-1} \nabla s^2$, $\tracelessHess{s^2}(\nu)$ is the vector field associated with the $1$-form $\tracelessHess{s^2}(\nu, \cdot)$, $\top$ denotes the tangential component with respect to $\Sigma$, and $(\tracelessHess{s^2}|_\Sigma)^\circ$ is the traceless part of $\tracelessHess{s^2}|_\Sigma$.
\end{lemma}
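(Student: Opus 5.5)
The identity is pointwise linear algebra: it holds for any traceless symmetric $2$-tensor on a three-dimensional inner product space, decomposed relative to a unit vector. The only geometric input is that $\Sigma$ is regular, so $\nabla s^2 = 2s\nabla s \neq 0$ on $\Sigma$. Hence $\nu$ is a well-defined unit normal to $\Sigma$ and $T_xM = \R\nu \oplus T_x\Sigma$ orthogonally at each $x \in \Sigma$.

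Write $A := \tracelessHess{s^2}$ at a point $x \in \Sigma$ and fix an orthonormal frame $\{\nu, e_1, e_2\}$ with $e_1, e_2$ tangent to $\Sigma$. Expanding the Hilbert--Schmidt norm in this frame and using the symmetry of $A$ gives
\[
\|A\|^2 = A(\nu,\nu)^2 + 2\sum_{i} A(\nu, e_i)^2 + \sum_{i,j} A(e_i, e_j)^2 .
\]
The middle sum is $\|A(\nu)^\top\|^2$. Moreover, $\|A(\nu,\cdot)\|^2 = A(\nu,\nu)^2 + \|A(\nu)^\top\|^2$, since $A(\nu)$ splits into its normal component $A(\nu,\nu)\nu$ and its tangential component $A(\nu)^\top$.

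For the tangential block $B := A|_\Sigma$, the key step is that $A$ is traceless in three dimensions, so $\tr_\Sigma B = \tr A - A(\nu,\nu) = -A(\nu,\nu)$. Decomposing $B$ on the two-dimensional space $T_x\Sigma$ into its traceless part and its trace part gives
\[
\|B\|^2 = \|B^\circ\|^2 + \tfrac{1}{2}(\tr_\Sigma B)^2 = \|B^\circ\|^2 + \tfrac{1}{2}A(\nu,\nu)^2 .
\]

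Substituting these into the frame expansion yields
\[
\|A\|^2 = \tfrac{3}{2}A(\nu,\nu)^2 + 2\|A(\nu)^\top\|^2 + \|B^\circ\|^2 .
\]
Regrouping one copy of $A(\nu,\nu)^2 + \|A(\nu)^\top\|^2$ as $\|A(\nu,\cdot)\|^2$ gives exactly \eqref{eq: norm of hessian of s squared along the regular level sets of s}.

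The only point needing care is the trace relation $\tr_\Sigma B = -A(\nu,\nu)$, which produces the coefficient $\tfrac{1}{2}$ on $A(\nu,\nu)^2$ through the two-dimensional trace decomposition. The rest is bookkeeping.
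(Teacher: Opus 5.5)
Your proposal is correct and follows essentially the same route as the paper: you expand the norm in a frame adapted to $\nu$, you split the tangential block into its traceless and trace parts using $\tr_\Sigma(\tracelessHess{s^2}|_\Sigma) = -\tracelessHess{s^2}(\nu,\nu)$, and you regroup via $\|\!\tracelessHess{s^2}(\nu,\cdot)\|^2 = (\tracelessHess{s^2}(\nu,\nu))^2 + \|\!\tracelessHess{s^2}(\nu)^\top\|^2$. Your justification that $\nu$ is well defined (since $\nabla s^2 = 2s\nabla s \neq 0$ on a regular level set with $s > 0$) is a small detail the paper leaves implicit.
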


\begin{proof}
By definition of the norm,
\[
\|\!\tracelessHess{s^2}\|^2 = (\tracelessHess{s^2}(\nu, \nu))^2 + 2 \|\!\tracelessHess{s^2}(\nu)^\top\|^2 + \|\!\tracelessHess{s^2}|_\Sigma\|^2.
\]
At the same time, $\tracelessHess{s^2}|_\Sigma = (\tracelessHess{s^2}|_\Sigma)^\circ + (\tr_\Sigma(\tracelessHess{s^2}|_\Sigma)/2)g|_\Sigma$, and we may directly compute
\begin{equation} \label{eq: trace of traceless hess s squared along regular level sets of s}
    \tr_\Sigma ( \tracelessHess{s^2}|_\Sigma) = \tr_\Sigma \left( \hess{s^2}|_\Sigma - \frac{\Delta s^2}{3} g|_\Sigma \right) = \frac{1}{3} \Delta s^2 - \hess{s^2}(\nu, \nu) = - \tracelessHess{s^2}(\nu, \nu).
\end{equation}
Thus, $\|\!\tracelessHess{s^2}|_\Sigma\|^2 = \|(\tracelessHess{s^2}|_\Sigma)^\circ\|^2 + \tracelessHess{s^2}(\nu, \nu)^2/2$,
and the desired conclusion now follows because 
\[
\|\!\tracelessHess{s^2}(\nu)^\top\|^2 = \|\!\tracelessHess{s^2}(\nu, \cdot)\|^2 - (\tracelessHess{s^2}(\nu, \nu))^2.
\]
\end{proof}

With the above, we are ready to rewrite the Ricci curvature along the regular level sets of $s$.

\begin{proposition} \label{prop: ricci curvature along regular level sets of s}
If $\Sigma$ is a regular level set of $s$, then
\begin{equation} \label{eq: ricci curvature along regular level sets of s}
    \begin{split}
        \ric(\nabla s^2, \nabla s^2) + \|\!\tracelessHess{s^2}\|^2 &= \frac{\|\nabla s^2\|^2}{2} (\scal - \scal_\Sigma) + \frac{1}{2} \|\!\tracelessHess{s^2}\|^2 \\ 
        &\quad \, + \frac{1}{\|\nabla s^2\|^2} \|\!\tracelessHess{s^2}(\nabla s^2, \cdot)\|^2 + \frac{(\Delta s^2)^2 }{9} - \frac{\Delta s^2}{3 \|\nabla s^2\|^2} \tracelessHess{s^2} (\nabla s^2, \nabla s^2),
    \end{split}
\end{equation}
where $\scal_\Sigma$ denotes the scalar curvature of $\Sigma$ in the induced metric.
\end{proposition}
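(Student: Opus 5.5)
We use the traced Gauss equation on $\Sigma$ with unit normal $\nu = \|\nabla s^2\|^{-1}\nabla s^2$:
\[
\scal = \scal_\Sigma + 2\ric(\nu,\nu) - H^2 + \|A\|^2 ,
\]
where $A$ is the second fundamental form and $H = \tr_\Sigma A$. Multiplying by $\|\nabla s^2\|^2/2$ gives
\[
\ric(\nabla s^2,\nabla s^2) = \frac{\|\nabla s^2\|^2}{2}(\scal - \scal_\Sigma) + \frac{\|\nabla s^2\|^2}{2}\bigl(H^2 - \|A\|^2\bigr).
\]
The whole task is then to express $\|\nabla s^2\|^2 (H^2 - \|A\|^2)/2$ through $\tracelessHess{s^2}$ and $\Delta s^2$, using Lemma \ref{lemma: norm of hessian of s squared along the regular level sets of s} to reorganize the norms.

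\textbf{Step 1: the second fundamental form.} Since $\Sigma$ is a level set of $s^2$, we have $A = \|\nabla s^2\|^{-1}\hess{s^2}|_\Sigma$. Write $\hess{s^2}|_\Sigma = \tracelessHess{s^2}|_\Sigma + (\Delta s^2/3)\, g|_\Sigma$. By \eqref{eq: trace of traceless hess s squared along regular level sets of s},
\[
\|\nabla s^2\|\, H = \frac{2}{3}\Delta s^2 - \tracelessHess{s^2}(\nu,\nu).
\]
Decompose $\hess{s^2}|_\Sigma$ into its traceless part $(\tracelessHess{s^2}|_\Sigma)^\circ$ plus $(\|\nabla s^2\| H/2)\, g|_\Sigma$. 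Then
\[
\|\nabla s^2\|^2 \|A\|^2 = \|(\tracelessHess{s^2}|_\Sigma)^\circ\|^2 + \frac{\|\nabla s^2\|^2 H^2}{2},
\]
and therefore
\[
\frac{\|\nabla s^2\|^2}{2}\bigl(H^2 - \|A\|^2\bigr) = \frac{\|\nabla s^2\|^2 H^2}{4} - \frac{1}{2}\|(\tracelessHess{s^2}|_\Sigma)^\circ\|^2 .
\]
Expanding the square gives
\[
\frac{\|\nabla s^2\|^2 H^2}{4} = \frac{(\Delta s^2)^2}{9} - \frac{\Delta s^2}{3}\tracelessHess{s^2}(\nu,\nu) + \frac{1}{4}\bigl(\tracelessHess{s^2}(\nu,\nu)\bigr)^2 .
\]
The middle term is exactly the last term of \eqref{eq: ricci curvature along regular level sets of s}, since $\tracelessHess{s^2}(\nu,\nu) = \|\nabla s^2\|^{-2}\tracelessHess{s^2}(\nabla s^2,\nabla s^2)$.

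\textbf{Step 2: adding $\|\tracelessHess{s^2}\|^2$ and matching.} Add $\|\tracelessHess{s^2}\|^2$ to both sides. Split it as $\frac{1}{2}\|\tracelessHess{s^2}\|^2 + \frac{1}{2}\|\tracelessHess{s^2}\|^2$, and expand the second half with Lemma \ref{lemma: norm of hessian of s squared along the regular level sets of s}. The resulting $\frac{1}{2}\|(\tracelessHess{s^2}|_\Sigma)^\circ\|^2$ cancels the negative term from Step 1. What remains to check is the identity
\[
\frac{1}{4}\tracelessHess{s^2}(\nu,\nu)^2 + \frac{1}{2}\Bigl(\frac{1}{2}\tracelessHess{s^2}(\nu,\nu)^2 + \|\tracelessHess{s^2}(\nu)^\top\|^2 + \|\tracelessHess{s^2}(\nu,\cdot)\|^2\Bigr) = \|\tracelessHess{s^2}(\nu,\cdot)\|^2 .
\]
This follows from $\|\tracelessHess{s^2}(\nu)^\top\|^2 = \|\tracelessHess{s^2}(\nu,\cdot)\|^2 - \tracelessHess{s^2}(\nu,\nu)^2$. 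Finally, $\|\tracelessHess{s^2}(\nu,\cdot)\|^2 = \|\nabla s^2\|^{-2}\|\tracelessHess{s^2}(\nabla s^2,\cdot)\|^2$, which matches the right-hand side of \eqref{eq: ricci curvature along regular level sets of s}.

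\textbf{Main obstacle.} The only delicate part is the bookkeeping: tracking the sign convention for $A$ and $H$ (any sign works, since only $H^2$ and $\|A\|^2$ appear), and getting the coefficient $\tfrac12$ in the split of $\|\tracelessHess{s^2}\|^2$ right so that the traceless tangential part cancels exactly. Choosing that split is the key step; once it is in place, the remaining terms combine as shown.
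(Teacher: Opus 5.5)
Your proof is correct and follows essentially the same route as the paper: you use the traced Gauss equation with $\two$ and $H$ rewritten through $\tracelessHess{s^2}$ and $\Delta s^2$, and then Lemma \ref{lemma: norm of hessian of s squared along the regular level sets of s} to cancel the traceless tangential part. The only difference is bookkeeping. You split $\|\tracelessHess{s^2}\|^2$ in half up front and expand one half, whereas the paper adds the full expansion and afterwards regroups $\frac{3}{4}(\tracelessHess{s^2}(\nu,\nu))^2 + \frac{1}{2}\|(\tracelessHess{s^2}|_\Sigma)^\circ\|^2 + \|\tracelessHess{s^2}(\nu)^\top\|^2$ into $\frac{1}{2}\|\tracelessHess{s^2}\|^2$.
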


\begin{proof}
As $s > 0$ on $M \setminus \{p\}$, $\Sigma$ is also a regular level set of $s^2$.
For the rest of this proof, all computations are understood along the level set $\Sigma$.

As in the proof of Lemma \ref{lemma: norm of hessian of s squared along the regular level sets of s}, put $\nu := \|\nabla s^2\|^{-1} \nabla s^2$.
Let $\two$ and $H$ be the second fundamental form and mean curvature of $\Sigma$, respectively, with respect to $\nu$.
It follows that
\[
\two = \frac{1}{\|\nabla s^2\|} \hess{s^2}|_\Sigma, \qquad H = \div \left( \frac{\nabla s^2}{\|\nabla s^2\|} \right).
\]
Let us first rewrite both $\two$ and $H$ in terms of $\tracelessHess{s^2}$.
On the one hand, since $d \|\nabla s^2\| = \hess{s^2}(\nu, \cdot)$, we have
\begin{equation} \label{eq: mean curvature of level sets of s in terms of Hessian of s squared}
    H \equiv \div \left( \frac{\nabla s^2}{\|\nabla s^2\|} \right) = \frac{1}{\|\nabla s^2\|} \left( \Delta s^2 - \hess{s^2}(\nu, \nu) \right) = \frac{1}{\|\nabla s^2\|} \left( \frac{2}{3} \Delta s^2 - \tracelessHess{s^2}(\nu, \nu) \right).
\end{equation}
On the other hand, as $\two \equiv \|\nabla s^2\|^{-1} \hess{s^2}|_\Sigma$, it follows that
\begin{equation} \label{eq: second fundamental form of level sets of s in terms of hessian of s squared part 1}
      \two = \frac{1}{\|\nabla s^2\|} \left( \tracelessHess{s^2}|_\Sigma + \frac{\Delta s^2}{3} g|_\Sigma \right) = \frac{1}{\|\nabla s^2\|} (\tracelessHess{s^2}|_\Sigma)^\circ + \frac{1}{\|\nabla s^2\|} \left( \frac{\tr_\Sigma(\tracelessHess{s^2}|_\Sigma)}{2} + \frac{\Delta s^2}{3} \right) g|_\Sigma,
\end{equation}
where $(\tracelessHess{s^2}|_\Sigma)^\circ$ denotes the traceless part of $\tracelessHess{s^2}|_\Sigma$.
Combining \eqref{eq: second fundamental form of level sets of s in terms of hessian of s squared part 1} with the identity \eqref{eq: trace of traceless hess s squared along regular level sets of s} from the proof of Lemma \ref{lemma: norm of hessian of s squared along the regular level sets of s}, we conclude that
\begin{equation} \label{eq: second fundamental form of level sets of s in terms of hessian of s squared part 2}
    \two = \frac{1}{\|\nabla s^2\|} (\tracelessHess{s^2}|_\Sigma)^\circ + \frac{1}{2\|\nabla s^2\|} \left( \frac{2}{3} \Delta s^2 - \tracelessHess{s^2}(\nu, \nu) \right) g|_\Sigma.
\end{equation}

Now, the traced Gauss equation and the rearrangement identity of Schoen-Yau tell us that
\begin{equation} \label{eq: gauss codazzi and rearrangement on regular level sets of s part 1}
    2\ric(\nu, \nu) = \scal - \scal_\Sigma + H^2 - \|\two\|^2.
\end{equation}
Inserting \eqref{eq: mean curvature of level sets of s in terms of Hessian of s squared} and \eqref{eq: second fundamental form of level sets of s in terms of hessian of s squared part 2} into \eqref{eq: gauss codazzi and rearrangement on regular level sets of s part 1}, we obtain the identity
\begin{equation} \label{eq: gauss codazzi and rearrangement on regular level sets of s part 2}
    \ric(\nu, \nu) = \frac{1}{2} (\scal - \scal_\Sigma) + \frac{1}{4\|\nabla s^2\|^2} \left( \frac{2}{3} \Delta s^2 - \tracelessHess{s^2}(\nu, \nu) \right)^2 - \frac{1}{2\|\nabla s^2\|^2} \| (\tracelessHess{s^2}|_\Sigma)^\circ \|^2.
\end{equation}
Combining this with \eqref{eq: norm of hessian of s squared along the regular level sets of s} from Lemma \ref{lemma: norm of hessian of s squared along the regular level sets of s}, we conclude that
\begin{equation} \label{eq: gauss codazzi and rearrangement on regular level sets of s part 3}
    \begin{split}
        \ric(\nu, \nu) + \frac{1}{\|\nabla s^2\|^2} \|\!\tracelessHess{s^2}\|^2 &= \frac{1}{2} (\scal - \scal_\Sigma) + \frac{1}{4\|\nabla s^2\|^2} \left( \frac{2}{3} \Delta s^2 - \tracelessHess{s^2}(\nu, \nu) \right)^2 \\
        &\quad \, + \frac{1}{2\|\nabla s^2\|^2} \|(\tracelessHess{s^2}|_\Sigma)^\circ\|^2 + \frac{1}{2\|\nabla s^2\|^2} (\tracelessHess{s^2}(\nu, \nu))^2 \\
        &\quad \, + \frac{1}{\|\nabla s^2\|^2} \| \tracelessHess{s^2}(\nu)^\top \|^2 + \frac{1}{\|\nabla s^2\|^2} \|\!\tracelessHess{s^2}(\nu, \cdot)\|^2.
    \end{split}
\end{equation}
Now, expanding
\[
\frac{1}{4\|\nabla s^2\|^2} \left( \frac{2}{3} \Delta s^2 - \tracelessHess{s^2}(\nu, \nu) \right)^2 = \frac{(\Delta s^2)^2}{9\|\nabla s^2\|^2} - \frac{\Delta s^2}{3\|\nabla s^2\|^2} \tracelessHess{s^2}(\nu, \nu) + \frac{1}{4\|\nabla s^2\|^2} (\tracelessHess{s^2}(\nu, \nu))^2,
\]
substituting the above into \eqref{eq: gauss codazzi and rearrangement on regular level sets of s part 3}, and grouping the terms, we obtain the identity
\begin{equation} \label{eq: gauss codazzi and rearrangement on regular level sets of s part 4}
    \begin{split}
        \ric(\nu, \nu) + \frac{1}{\|\nabla s^2\|^2} \|\!\tracelessHess{s^2}\|^2 &= \frac{1}{2} (\scal - \scal_\Sigma) + \frac{(\Delta s^2)^2}{9\|\nabla s^2\|^2} - \frac{\Delta s^2}{3\|\nabla s^2\|^2} \tracelessHess{s^2}(\nu, \nu) \\
        &\quad \, + \frac{1}{\|\nabla s^2\|^2} \left[ \frac{3}{4} (\tracelessHess{s^2}(\nu, \nu))^2 + \frac{1}{2} \|(\tracelessHess{s^2}|_\Sigma)^\circ\|^2 + \|\!\tracelessHess{s^2}(\nu)^\top\|^2 \right] \\
        &\quad \, + \frac{1}{\|\nabla s^2\|^2} \|\!\tracelessHess{s^2}(\nu, \cdot)\|^2.
    \end{split}
\end{equation}
Finally, since $\|\!\tracelessHess{s^2}(\nu, \cdot)\|^2 = (\tracelessHess{s^2}(\nu, \nu))^2 + \|\!\tracelessHess{s^2}(\nu)^\top\|^2$, \eqref{eq: norm of hessian of s squared along the regular level sets of s} from Lemma \ref{lemma: norm of hessian of s squared along the regular level sets of s} implies that
\allowdisplaybreaks
\begin{align*}
    \|\!\tracelessHess{s^2}\|^2 &= \frac{1}{2} (\tracelessHess{s^2}(\nu, \nu))^2 + \|\!\tracelessHess{s^2}(\nu)^\top\|^2 + \|\!\tracelessHess{s^2}(\nu, \cdot)\|^2 + \|(\tracelessHess{s^2}|_\Sigma)^\circ\|^2 \\
    &= \frac{3}{2} (\tracelessHess{s^2}(\nu, \nu))^2 + \|(\tracelessHess{s^2}|_\Sigma)^\circ\|^2 + 2 \|\!\tracelessHess{s^2}(\nu)^\top\|^2.
\end{align*}
Inserting this into \eqref{eq: gauss codazzi and rearrangement on regular level sets of s part 4}, we obtain the equation
\begin{equation*}
    \begin{split}
        \ric(\nu, \nu) + \frac{1}{\|\nabla s^2\|^2} \|\!\tracelessHess{s^2}\|^2 &= \frac{1}{2} (\scal - \scal_\Sigma) + \frac{(\Delta s^2)^2}{9\|\nabla s^2\|^2} - \frac{\Delta s^2}{3\|\nabla s^2\|^2} \tracelessHess{s^2}(\nu, \nu) \\
        &\quad \, + \frac{1}{2 \|\nabla s^2\|^2} \|\!\tracelessHess{s^2}\|^2 + \frac{1}{\|\nabla s^2\|^2} \|\!\tracelessHess{s^2}(\nu, \cdot)\|^2.
    \end{split}
\end{equation*}
The desired conclusion now follows by multiplying this preceding identity by $\|\nabla s^2\|^2$.
\end{proof}

Combining Propositions \ref{prop: drift bochner formula for sharp gradient quantity} and \ref{prop: ricci curvature along regular level sets of s}, we obtain the following result.

\begin{corollary} \label{cllr: almost divergence computation for sharp monotonic quantity}
If $\Sigma$ is a regular level set of $s$, then
\begin{equation}
    \begin{split}
        \Delta \left( \|\nabla s\|^2 + \frac{k}{4} s^2 \right) &= - \|\nabla s\|^2 \scal_\Sigma + \|\nabla s\|^2 (\scal - 6k) + \frac{1}{4s^2} \|\!\tracelessHess{s^2}\|^2 \\ 
        &\quad \, + \frac{2}{s^2} \left(\|\nabla s\|^2 + \frac{k}{4} s^2 \right)^2 +  \frac{1}{2s^2 \|\nabla s^2\|^2}\|\!\tracelessHess{s^2}(\nabla s^2, \cdot)\|^2 \\
        &\quad \, + \left( \frac{1}{4s^4} + \frac{k}{4 \|\nabla s^2\|^2} \right) \tracelessHess{s^2}(\nabla s^2, \nabla s^2),
    \end{split}
\end{equation}
where $\scal_\Sigma$ denotes the scalar curvature of $\Sigma$ in the induced metric.
\end{corollary}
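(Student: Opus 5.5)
The plan is a direct substitution: insert the identity of Proposition \ref{prop: ricci curvature along regular level sets of s} into the right-hand side of \eqref{eq: drift bochner formula for sharp gradient quantity}. Then use \eqref{eq: Delta s^2} to turn every occurrence of $\Delta s^2$ and $\|\nabla s^2\|^2 = 4s^2\|\nabla s\|^2$ into expressions in $s$ and $\|\nabla s\|^2$. No new geometric input is required; the work is bookkeeping of coefficients.

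\textbf{Step 1: substitute the Ricci term.} By Proposition \ref{prop: ricci curvature along regular level sets of s}, the bracket on the right of \eqref{eq: drift bochner formula for sharp gradient quantity}, divided by $2s^2$, becomes
\[
\frac{1}{2s^2}\Big[\tfrac{\|\nabla s^2\|^2}{2}(\scal-\scal_\Sigma) + \tfrac12\|\!\tracelessHess{s^2}\|^2 + \tfrac{1}{\|\nabla s^2\|^2}\|\!\tracelessHess{s^2}(\nabla s^2,\cdot)\|^2 + \tfrac{(\Delta s^2)^2}{9} - \tfrac{\Delta s^2}{3\|\nabla s^2\|^2}\tracelessHess{s^2}(\nabla s^2,\nabla s^2) - 2k\|\nabla s^2\|^2\Big].
\]
Using $\|\nabla s^2\|^2 = 4s^2\|\nabla s\|^2$, the curvature terms give $\|\nabla s\|^2(\scal - \scal_\Sigma) - 4k\|\nabla s\|^2$. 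The Hessian-norm terms give the coefficients $1/(4s^2)$ and $1/(2s^2\|\nabla s^2\|^2)$ claimed in the statement.

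\textbf{Step 2: produce the square and the $-6k$.} By \eqref{eq: Delta s^2}, $(\Delta s^2)^2/9 = 4(\|\nabla s\|^2 - \tfrac{k}{4}s^2)^2$, so $(\Delta s^2)^2/(18 s^2) = \tfrac{2}{s^2}(\|\nabla s\|^2 - \tfrac{k}{4}s^2)^2$. We rewrite this as
\[
\tfrac{2}{s^2}\Big(\|\nabla s\|^2+\tfrac{k}{4}s^2\Big)^2 - 2k\|\nabla s\|^2 .
\]
Together with the $-4k\|\nabla s\|^2$ from Step 1, this yields the term $\|\nabla s\|^2(\scal - 6k)$ together with the squared quantity in the statement.

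\textbf{Step 3: collect the $\tracelessHess{s^2}(\nabla s^2,\nabla s^2)$ terms.} One such term is the $+\tfrac{1}{2s^4}$ term moved over from the left side of \eqref{eq: drift bochner formula for sharp gradient quantity}. The other is $-\tfrac{\Delta s^2}{6s^2\|\nabla s^2\|^2}$. Substituting $\Delta s^2 = 6\|\nabla s\|^2 - \tfrac{3k}{2}s^2$ and $\|\nabla s^2\|^2 = 4s^2\|\nabla s\|^2$, the latter equals
\[
-\frac{1}{4s^4} + \frac{k}{4\|\nabla s^2\|^2}.
\]
Adding the $\tfrac{1}{2s^4}$ term gives exactly the coefficient $\tfrac{1}{4s^4} + \tfrac{k}{4\|\nabla s^2\|^2}$.

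The only real obstacle is sign and coefficient tracking in Steps 2 and 3. In Step 2 the key move is completing the square to convert $(\|\nabla s\|^2 - \tfrac{k}{4}s^2)^2$ into $(\|\nabla s\|^2 + \tfrac{k}{4}s^2)^2$, which is what generates the extra $-2k$ needed for $\scal - 6k$. In Step 3 one must correctly split $\Delta s^2/\|\nabla s^2\|^2$ into its $s^{-2}$ and $k/\|\nabla s^2\|^2$ parts.
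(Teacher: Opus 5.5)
Your proposal is correct and follows the paper's proof exactly. You substitute Proposition \ref{prop: ricci curvature along regular level sets of s} into \eqref{eq: drift bochner formula for sharp gradient quantity}, use $\|\nabla s^2\|^2 = 4s^2\|\nabla s\|^2$, and then apply the two identities $(\Delta s^2)^2/(18s^2) = \frac{2}{s^2}(\|\nabla s\|^2 + \frac{k}{4}s^2)^2 - 2k\|\nabla s\|^2$ and $-\Delta s^2/(6s^2\|\nabla s^2\|^2) = -\frac{1}{4s^4} + \frac{k}{4\|\nabla s^2\|^2}$, and all your coefficients check out.
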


\begin{proof}
Using that $\|\nabla s^2\|^2 = 4 s^2 \|\nabla s\|^2$, combining Propositions \ref{eq: drift bochner formula for sharp gradient quantity} and \ref{prop: ricci curvature along regular level sets of s} and then grouping terms, we have
\begin{equation} \label{eq: drift bochner formula on regular level sets of s}
    \begin{split}
        \Delta \left( \|\nabla s\|^2 + \frac{k}{4} s^2 \right) &= - \|\nabla s\|^2 \scal_\Sigma + \|\nabla s\|^2 (\scal - 6k) + \frac{1}{4s^2} \|\!\tracelessHess{s^2}\|^2 \\
        &\quad \, + 2k \|\nabla s\|^2 + \frac{1}{2s^2 \|\nabla s^2\|^2} \|\tracelessHess{s^2}(\nabla s^2, \cdot)\|^2 + \frac{(\Delta s^2)^2}{18 s^2} \\
        &\quad \, + \left( \frac{1}{2s^4} - \frac{\Delta s^2}{6 s^2 \|\nabla s^2\|^2} \right) \tracelessHess{s^2}(\nabla s^2, \nabla s^2).
    \end{split}
\end{equation}
Now, by \eqref{eq: Delta s^2} from the proof of Proposition \ref{eq: drift bochner formula for sharp gradient quantity}, we may also compute
\allowdisplaybreaks
\begin{align} \label{eq: Delta s^2 squared for ricci on regular level sets of s}
    \frac{(\Delta s^2)^2}{18s^2} &= \frac{1}{18s^2} \left( 6 \|\nabla s\|^2 - \frac{3k}{2} s^2 \right)^2 \nonumber \\
    &= \frac{2}{s^2} \left( \|\nabla s\|^2 + \frac{k}{4} s^2 \right)^2 - 2k \|\nabla s\|^2
\end{align}
and
\begin{equation} \label{eq: Delta s^2 divided by s squared grad s squared}
    - \frac{\Delta s^2}{6 s^2 \|\nabla s^2\|^2} = - \frac{1}{4s^4} + \frac{k}{4\|\nabla s^2\|^2}
\end{equation}
Substituting \eqref{eq: Delta s^2 squared for ricci on regular level sets of s} and \eqref{eq: Delta s^2 divided by s squared grad s squared} into \eqref{eq: drift bochner formula on regular level sets of s} gives the desired conclusion.
\end{proof}

Using the above identities, we may proceed with the proof of \eqref{intro-eq: bochner formula for sharp monotonic quantity}, which we state as a proposition below.
As previously remarked, this is a positive-curvature extension of {\cite[Proposition 1.17]{CMGradientEsimatesScalarCurvature}}.

\begin{proposition} \label{prop: divergence on regular level sets for sharp monotonic quantity}
If $\Sigma$ is a regular level set of $s$, then
\begin{equation} \label{eq: divergence form bochner formula along level sets for sharp monotonic quantity}
    \begin{split}
        \frac{1}{2\|\nabla s\|} \div \left( \frac{1}{\|\nabla s\|} \nabla \left( \|\nabla s\|^2 + \frac{k}{4} s^2 \right) \right) &= - \frac{1}{2} \scal_\Sigma + \frac{1}{2} (\scal - 6k) + \frac{1}{2\|\nabla s^2\|^2} \|\!\tracelessHess{s^2}\|^2 \\
        &\quad \, + \frac{1}{s^2 \|\nabla s\|^2} \left( \|\nabla s\|^2 + \frac{k}{4} s^2 \right)^2 \\
        &\quad \, + \left( \frac{1}{2s^2} + \frac{k}{4\|\nabla s\|^2} \right) \tracelessHess{s^2} \left( \frac{\nabla s}{\|\nabla s\|}, \frac{\nabla s}{\|\nabla s\|} \right),
    \end{split}
\end{equation}
where $\scal_\Sigma$ denotes the scalar curvature of $\Sigma$ in the induced metric.
\end{proposition}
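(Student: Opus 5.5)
We expand the divergence with the product rule and then substitute Corollary \ref{cllr: almost divergence computation for sharp monotonic quantity} together with the gradient identity \eqref{eq: gradient of sharp gradient quantity is traceless hessian}. Write $F := \|\nabla s\|^2 + \frac{k}{4}s^2$. Then
\[
\div\left(\frac{1}{\|\nabla s\|}\nabla F\right) = \frac{1}{\|\nabla s\|}\Delta F - \frac{1}{2\|\nabla s\|^3}\langle \nabla \|\nabla s\|^2, \nabla F\rangle,
\]
so the left-hand side of \eqref{eq: divergence form bochner formula along level sets for sharp monotonic quantity} equals
\[
\frac{1}{2\|\nabla s\|^2}\Delta F - \frac{1}{4\|\nabla s\|^4}\langle \nabla\|\nabla s\|^2,\nabla F\rangle .
\]
For the first term we substitute the Corollary, divided by $2\|\nabla s\|^2$. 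This immediately produces $-\frac12\scal_\Sigma + \frac12(\scal-6k)$ and the term $\frac{1}{s^2\|\nabla s\|^2}F^2$. It also gives
\[
\frac{1}{8s^2\|\nabla s\|^2}\|\!\tracelessHess{s^2}\|^2 = \frac{1}{2\|\nabla s^2\|^2}\|\!\tracelessHess{s^2}\|^2,
\]
using $\|\nabla s^2\|^2 = 4s^2\|\nabla s\|^2$. Two further pieces remain: a term in $\|\!\tracelessHess{s^2}(\nabla s^2,\cdot)\|^2$, and a term in $\tracelessHess{s^2}(\nabla s^2,\nabla s^2)$.

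For the second term we need $\nabla \|\nabla s\|^2 = \nabla F - \frac{k}{2}s\nabla s$. Both $\nabla F$ and $\nabla s$ are expressed through \eqref{eq: gradient of sharp gradient quantity is traceless hessian}, namely $dF = \frac{1}{2s^2}\tracelessHess{s^2}(\nabla s^2,\cdot)$. This yields
\[
\langle \nabla\|\nabla s\|^2,\nabla F\rangle = \|dF\|^2 - \frac{k}{2}s\, dF(\nabla s) = \frac{1}{4s^4}\|\!\tracelessHess{s^2}(\nabla s^2,\cdot)\|^2 - \frac{k}{8s^2}\tracelessHess{s^2}(\nabla s^2,\nabla s^2).
\]
Here we used $\nabla s = \nabla s^2/(2s)$.

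The crux is the bookkeeping that cancels the $\|\!\tracelessHess{s^2}(\nabla s^2,\cdot)\|^2$ terms. From the Corollary this term enters with coefficient
\[
\frac{1}{2\|\nabla s\|^2}\cdot\frac{1}{2s^2\|\nabla s^2\|^2} = \frac{1}{16 s^4\|\nabla s\|^4}.
\]
The second term subtracts $\frac{1}{4\|\nabla s\|^4}\cdot\frac{1}{4s^4}$, which is the same quantity, so the two cancel exactly.

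It remains to collect the $\tracelessHess{s^2}(\nabla s^2,\nabla s^2)$ terms. The Corollary contributes
\[
\frac{1}{2\|\nabla s\|^2}\left(\frac{1}{4s^4}+\frac{k}{16 s^2\|\nabla s\|^2}\right),
\]
and the second term contributes $+\frac{k}{32 s^2\|\nabla s\|^4}$. Finally we rewrite $\tracelessHess{s^2}(\nabla s^2,\nabla s^2) = 4s^2\|\nabla s\|^2\,\tracelessHess{s^2}(\hat n,\hat n)$ with $\hat n = \nabla s/\|\nabla s\|$. The coefficient of $\tracelessHess{s^2}(\hat n,\hat n)$ then becomes
\[
\frac{1}{2s^2} + \frac{k}{8\|\nabla s\|^2} + \frac{k}{8\|\nabla s\|^2} = \frac{1}{2s^2}+\frac{k}{4\|\nabla s\|^2},
\]
as claimed. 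The main obstacle is only this bookkeeping, in particular tracking the factors $\|\nabla s^2\|^2 = 4s^2\|\nabla s\|^2$ consistently; no new geometric input is needed.
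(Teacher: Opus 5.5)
Your proposal is correct and follows the paper's proof. Both expand by the product rule, use Corollary \ref{cllr: almost divergence computation for sharp monotonic quantity} for the $\Delta$-term and \eqref{eq: gradient of sharp gradient quantity is traceless hessian} for the cross term, cancel the $\|\!\tracelessHess{s^2}(\nabla s^2,\cdot)\|^2$ terms exactly, and add the $\tracelessHess{s^2}$ coefficients to $\frac{1}{2s^2}+\frac{k}{4\|\nabla s\|^2}$. The one cosmetic difference is that you write $\nabla\|\nabla s\|^2 = \nabla F - \frac{k}{2}s\nabla s$, so the whole cross term goes through $dF$, whereas the paper handles the $\frac{k}{4}s^2$ piece separately via $\hess{s}$ and \eqref{eq: Delta s^2}.
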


\begin{proof}
By the product rule,
\allowdisplaybreaks
\begin{align} \label{align: product rule for divergence term for sharp monotonic quantity}
    \frac{1}{2\|\nabla s\|} \div \left( \frac{1}{\|\nabla s\|} \nabla \left( \|\nabla s\|^2 + \frac{k}{4} s^2 \right) \right) &= \frac{1}{2 \|\nabla s\|^2} \Delta \left( \|\nabla s\|^2 + \frac{k}{4} s^2 \right) \nonumber \\
    &\quad \, - \frac{1}{2 \|\nabla s\|^3} \left\langle \nabla \left( \|\nabla s\|^2 + \frac{k}{4} s^2 \right), \nabla \|\nabla s\| \right\rangle.
\end{align}
On the one hand, by Corollary \ref{cllr: almost divergence computation for sharp monotonic quantity}, we have
\begin{equation} \label{eq: almost divergence for sharp monotonic quantity part 1}
    \begin{split}
        \frac{1}{2 \|\nabla s\|^2}\Delta \left( \|\nabla s\|^2 + \frac{k}{4} s^2 \right) &= - \frac{\scal_\Sigma}{2} + \frac{\scal - 6k}{2} + \frac{1}{8s^2 \|\nabla s\|^2} \|\!\tracelessHess{s^2}\|^2 \\ 
        &\quad \, + \frac{1}{s^2 \|\nabla s\|^2} \left(\|\nabla s\|^2 + \frac{k}{4} s^2 \right)^2 + \frac{1}{16 s^4 \|\nabla s\|^4} \|\!\tracelessHess{s^2}(\nabla s^2, \cdot)\|^2 \\
        &\quad \, + \frac{1}{2 \|\nabla s\|^2} \left( \frac{1}{4s^4} + \frac{k}{4 \|\nabla s^2\|^2} \right) \tracelessHess{s^2}(\nabla s^2, \nabla s^2).
    \end{split}
\end{equation}
On the other hand, we may compute, with the chain rule and the identity $d \|\nabla s\| = \hess{s} (\|\nabla s\|^{-1} \nabla s, \cdot)$,
\allowdisplaybreaks
\begin{align} \label{align: second term in divergence of sharp monotonic quantity part 1}
    \frac{k}{4} \langle \nabla s^2, \nabla \|\nabla s\| \rangle &= \frac{k}{4\|\nabla s^2\|} \hess{s}(\nabla s^2, \nabla s^2) \nonumber \\
    &= \frac{k}{4\|\nabla s^2\|} \left( \frac{1}{2s} \hess{s^2}(\nabla s^2, \nabla s^2) - 4s \|\nabla s\|^4 \right) \nonumber \\
    &= \frac{k}{4 \|\nabla s^2\|} \left( \frac{1}{2s} \tracelessHess{s^2}(\nabla s^2, \nabla s^2) - k s^3 \|\nabla s\|^2 \right) \nonumber \\
    &= \frac{k}{16 s^2 \|\nabla s\|} \tracelessHess{s^2}(\nabla s^2, \nabla s^2) - \frac{k^2}{8} s^2 \|\nabla s\|,
\end{align}
where the second-to-last identity follows because $\Delta s^2 = 6 \|\nabla s\|^2 - 3k s^2/2$ (cf.~\eqref{eq: Delta s^2}).

Similarly, $\langle \nabla \|\nabla s\|^2, \nabla \|\nabla s\| \rangle = 2 \|\nabla s\| \|\nabla \|\nabla s\|\|^2$, and by \eqref{eq: gradient of sharp gradient quantity is traceless hessian} from the proof of Proposition \ref{prop: drift bochner formula for sharp gradient quantity}, it follows that
\allowdisplaybreaks
\begin{align} \label{align: second term in divergence of sharp monotonic quantity part 2}
    2\|\nabla s\| \|\nabla \|\nabla s\|\|^2 &= \frac{1}{2\|\nabla s\|} \left\|\frac{1}{2s^2} \tracelessHess{s^2}(\nabla s^2, \cdot) - \frac{ks}{2} ds \right\|^2 \nonumber \\
    &= \frac{1}{8s^4 \|\nabla s\|} \|\!\tracelessHess{s^2}(\nabla s^2, \cdot)\|^2 + \frac{k^2}{8} s^2 \|\nabla s\| - \frac{k}{8s^2 \|\nabla s\|} \tracelessHess{s^2}(\nabla s^2, \nabla s^2).
\end{align}
Thus, by \eqref{align: second term in divergence of sharp monotonic quantity part 1} and \eqref{align: second term in divergence of sharp monotonic quantity part 2}, we conclude that
\begin{equation} \label{eq: almost divergence for sharp monotonic quantity part 2}
    - \frac{1}{2\|\nabla s\|^3}\left\langle \nabla \left( \|\nabla s\|^2 + \frac{k}{4} s^2 \right), \nabla \|\nabla s\| \right\rangle = -\frac{1}{16 s^4 \|\nabla s\|^4} \|\!\tracelessHess{s^2}(\nabla s^2, \cdot)\|^2 + \frac{k}{32 s^2 \|\nabla s\|^4} \tracelessHess{s^2}(\nabla s^2, \nabla s^2).
\end{equation}
The desired conclusion now follows by combining \eqref{align: product rule for divergence term for sharp monotonic quantity} with \eqref{eq: almost divergence for sharp monotonic quantity part 1} and \eqref{eq: almost divergence for sharp monotonic quantity part 2}.
\end{proof}

We are now ready to prove Theorem \ref{intro-th: sharp monotonic quantity}.

\begin{proof}[Proof of Theorem \ref{intro-th: sharp monotonic quantity}]
For the rest of this proof, all derivatives are computed at regular values of $s$, which form a dense and full measure subset of its range by Lemma \ref{appendix-lemma: regular values of G}.
As mentioned in Section \ref{sec: intro}, the main idea of this proof is to compute the integral of $\div(\|\nabla s\|^{-1} \nabla (\|\nabla s\|^2 + ks^2/4))$ on the sublevel set $\{s \leq r\}$ in two different ways: one via the Divergence Theorem, and one by using the coarea formula and Proposition \ref{prop: divergence on regular level sets for sharp monotonic quantity}.
Thus, let us proceed with this argument; we remark that equation \eqref{align: product rule for divergence term for sharp monotonic quantity} and the asymptotics of $s$ near its singularity (cf.~Lemma \ref{appendix-lemma: asymptotics of greens function near singularity}) show that the function $\div(\|\nabla s\|^{-1} \nabla (\|\nabla s\|^2 + ks^2/4))$ is integrable near $p$, and  Remark \ref{appendix-remark: integrability of the singular vector field for sharp monotonic quantity} shows that it is thus also locally integrable on $M$.
Denote by
\begin{equation} \label{eq: definition of energy and area terms for s}
    E(r) := \int_{s = r} \|\nabla s\|^2, \qquad A(r) := \mathrm{area}(s = r), \qquad B(r) := \int_{s = r} \frac{1}{\|\nabla s\|^2},
\end{equation}
where $\mathrm{area}(s = r)$ denotes the two-dimensional Hausdorff measure of the level set $\{s = r\}$.
By the coarea formula, the statement of Theorem \ref{intro-th: sharp monotonic quantity} may be written as
\begin{equation} \label{eq: what we need to prove the sharp monotonic quantity via s}
    \frac{\dd}{\dd r} \left( \frac{1}{r^3} E(r) + \frac{3k}{2r^2} \int_0^r A(t) \dd t + \frac{3k^2}{32r^2} \int_0^r (r^2 - t^2) B(t) \dd t - \frac{4 \pi}{r} \right) \geq 0,
\end{equation}
with equality at some $r > 0$ only if $r \leq 2/\sqrt{k}$ and $(M, g)$ is isometric to the three-sphere of curvature $k$.

Put $\nu := \|\nabla s\|^{-1} \nabla s$.
Let us first compute the derivatives of $E$ and $A$; these will be used in the subsequent computations.
For $E$, by Lemma \ref{appendix-lemma: local AC of energy term} and recalling that $d \|\nabla s\| = \hess{s} (\nu, \cdot)$, we have $E'(r) = \int_{s = r} ( \Delta s + \hess{s}(\nu, \nu) )$.
By \eqref{eq: Delta s^2} from the proof of Proposition \ref{prop: drift bochner formula for sharp gradient quantity} and the product rule, we have $\Delta s = 2 \|\nabla s\|^2/s - 3ks/4$; substituting this into the preceding derivative computation, we obtain the identity
\begin{equation} \label{eq: derivative of energy term with respect to s}
    E'(r) - \frac{2}{r} E(r) + \frac{3k}{4} r A(r) = \int_{s = r} \hess{s}(\nu, \nu).
\end{equation}
Similarly, for $A$, by the first variation of area, we have
\begin{equation} \label{align: derivative of area of level sets of s}
    A'(r) = \int_{s = r} \frac{1}{\|\nabla s\|} \div \left( \frac{\nabla s}{\|\nabla s\|} \right) = \int_{s = r} \frac{\Delta s - \hess{s}(\nu, \nu)}{\|\nabla s\|^2} = \frac{2}{r} A(r) - \frac{3k}{4} r B(r) - \int_{s = r} \frac{\hess{s}(\nu, \nu)}{\|\nabla s\|^2}.
\end{equation}

Let us now proceed with the computation of the integral of $\div(\|\nabla s\|^{-1} \nabla (\|\nabla s\|^2 + ks^2/4))$ on the sublevel set $\{s \leq r\}$.
On the one hand, for regular values $0 < u < r$ of $s$, Remark \ref{appendix-remark: integrability of the singular vector field for sharp monotonic quantity} from Appendix \ref{appendix-sec: integration over the (super)level sets of greens function} tells us that, with a Dominated Convergence Theorem argument, we may apply the Divergence Theorem to obtain that
\allowdisplaybreaks
\begin{align*}
    \int_{u \leq s \leq r} \div \left( \frac{1}{\|\nabla s\|} \nabla \left( \|\nabla s\|^2 + \frac{k}{4} s^2 \right) \right) &= \int_{s = r} \left\langle \nabla \left( \|\nabla s\|^2 + \frac{k}{4} s^2 \right), \frac{\nabla s}{\|\nabla s\|^2} \right\rangle \\
    &\quad \, - \int_{s = u} \left\langle \nabla \left( \|\nabla s\|^2 + \frac{k}{4} s^2 \right), \frac{\nabla s}{\|\nabla s\|^2} \right\rangle.
\end{align*}
By the asymptotics of $s$ near $p$ (cf.~Lemma \ref{appendix-lemma: asymptotics of greens function near singularity}), the second boundary term converges to zero as $u \downarrow 0$; thus, the Dominated Convergence Theorem implies that
\[
\int_{s \leq r} \div \left( \frac{1}{\|\nabla s\|} \nabla \left( \|\nabla s\|^2 + \frac{k}{4} s^2 \right) \right) = \int_{s = r} \left\langle \nabla \left( \|\nabla s\|^2 + \frac{k}{4} s^2 \right), \frac{\nabla s}{\|\nabla s\|^2} \right\rangle.
\]
Using the product rule and the identity $d \|\nabla s\| = \hess{s}(\nu, \cdot)$ for the above right-hand side, we obtain that
\[
\int_{s \leq r} \div \left( \frac{1}{\|\nabla s\|} \nabla \left( \|\nabla s\|^2 + \frac{k}{4} s^2 \right) \right) = 2 \int_{s = r} \hess{s}(\nu, \nu) + \frac{k}{2} r \, \mathrm{area}(s = r) \equiv 2 \int_{s = r} \hess{s}(\nu, \nu) + \frac{k}{2} r A(r).
\]
Inserting this equation into \eqref{eq: derivative of energy term with respect to s}, it follows that
\begin{equation} \label{eq: proof of sharp monotonic quantity with s part 1}
    \frac{1}{2} \int_{s \leq r} \div \left( \frac{1}{\|\nabla s\|} \nabla \left( \|\nabla s\|^2 + \frac{k}{4} s^2 \right) \right) = E'(r) - \frac{2}{r} E(r) + k r A(r).
\end{equation}

On the other hand, we may apply the coarea formula to compute
\[
\frac{1}{2} \int_{s \leq r} \div \left( \frac{1}{\|\nabla s\|} \nabla \left( \|\nabla s\|^2 + \frac{k}{4} s^2 \right) \right) = \int_0^r \left( \int_{s = t} \frac{1}{2 \|\nabla s\|} \div \left( \frac{1}{\|\nabla s\|} \nabla \left( \|\nabla s\|^2 + \frac{k}{4} s^2 \right) \right) \right) \dd t,
\]
and we may then use \eqref{eq: divergence form bochner formula along level sets for sharp monotonic quantity} from Proposition \ref{prop: divergence on regular level sets for sharp monotonic quantity} to conclude that
\allowdisplaybreaks
\begin{align*}
    \frac{1}{2} \int_{s \leq r} \div \left( \frac{1}{\|\nabla s\|} \nabla \left( \|\nabla s\|^2 + \frac{k}{4} s^2 \right) \right) &= \int_0^r \left( \int_{s = t} \left( - \frac{\scal_{\{s = t\}}}{2} + \frac{\scal - 6k}{2} + \frac{1}{2\|\nabla s^2\|^2} \|\!\tracelessHess{s^2}\|^2 \right) \right) \dd t \\
    &\quad \, + \int_0^r \left( \int_{s = t} \frac{1}{s^2 \|\nabla s\|^2} \left( \|\nabla s\|^2 + \frac{k}{4} s^2 \right)^2 \right) \dd t \\
    &\quad \, + \int_0^r \left( \int_{s = t} \left( \frac{1}{2s^2} + \frac{k}{4\|\nabla s\|^2} \right) \tracelessHess{s^2} \left( \nu, \nu \right) \right) \dd t,
\end{align*}
where $\scal_{\{s = t\}}$ is the scalar curvature of the level set $\{s = t\}$ in the induced metric.
Using the Gauss-Bonnet Theorem, the above equation becomes
\begin{equation} \label{eq: proof of sharp monotonic quantity with s part 2}
    \begin{split}
        \frac{1}{2} \int_{s \leq r} \div \left( \frac{1}{\|\nabla s\|} \nabla \left( \|\nabla s\|^2 + \frac{k}{4} s^2 \right) \right) &= \int_0^r \left( - 2 \pi \chi(s = t) + \int_{s = t} \left( \frac{\scal - 6k}{2} + \frac{1}{2\|\nabla s^2\|^2} \|\!\tracelessHess{s^2}\|^2 \right) \right) \dd t \\
        &\quad \, + \int_0^r \left( \int_{s = t} \frac{1}{s^2 \|\nabla s\|^2} \left( \|\nabla s\|^2 + \frac{k}{4} s^2 \right)^2 \right) \dd t \\
        &\quad \, + \int_0^r \left( \int_{s = t} \left( \frac{1}{2s^2} + \frac{k}{4\|\nabla s\|^2} \right) \tracelessHess{s^2} \left( \nu, \nu \right) \right) \dd t.
    \end{split}
\end{equation}
We now proceed with rewriting the last two integral terms in \eqref{eq: proof of sharp monotonic quantity with s part 2}.
For the first one, expanding the square, we obtain that
\allowdisplaybreaks
\begin{align} \label{eq: proof of sharp monotonic quantity with s part 3}
    \int_{s = t} \frac{1}{s^2 \|\nabla s\|^2} \left( \|\nabla s\|^2 + \frac{k}{4} s^2 \right)^2 &= \int_{s = t} \frac{1}{s^2 \|\nabla s\|^2} \left( \|\nabla s\|^4 + \frac{k}{2} s^2 \|\nabla s\|^2 + \frac{k^2}{16} s^4 \right) \nonumber \\
    &=  \frac{1}{t^2} E(t) + \frac{k}{2} A(t) + \frac{k^2}{16} t^2 B(t),
\end{align}
where $E, A$, and $B$ were defined in \eqref{eq: definition of energy and area terms for s}.

For the second term, we rephrase it using only $\hess{s}$ as follows: by definition of the traceless Hessian, by the chain rule, and by \eqref{eq: Delta s^2} from the proof of Proposition \ref{prop: drift bochner formula for sharp gradient quantity}, we have
\[
\tracelessHess{s^2} = \hess{s^2} - \frac{\Delta s^2}{3} g = 2s \hess{s} + 2 ds \otimes ds - \left( 2 \|\nabla s\|^2 - \frac{k}{2} s^2 \right)g,
\]
so
\begin{equation} \label{eq: proof of sharp monotonic quantity with s part 4}
    \left( \frac{1}{2s^2} + \frac{k}{4\|\nabla s\|^2} \right) \tracelessHess{s^2} \left( \nu, \nu \right) = \left( \frac{1}{s} + \frac{ks}{2\|\nabla s\|^2} \right) \left( \hess{s}(\nu, \nu) + \frac{k}{4} s \right).
\end{equation}
Furthermore,
\allowdisplaybreaks
\begin{align*}
    \int_{s = t} \left( \frac{1}{s} + \frac{ks}{2\|\nabla s\|^2} \right) \left( \hess{s}(\nu, \nu) + \frac{k}{4} s \right) &= \frac{1}{t} \int_{s = t} \left( \hess{s}(\nu, \nu) + \frac{kt}{4} \right) + \frac{kt}{2} \int_{s = t} \frac{\hess{s}(\nu, \nu)}{\|\nabla s\|^2} \\
    &\quad \, + \frac{k^2 t^2}{8} \int_{s = t} \frac{1}{\|\nabla s\|^2} \\
    &\equiv \frac{k}{4} A(t) + \int_{s = t} \hess{s}(\nu, \nu) + \frac{kt}{2} \int_{s = t} \frac{\hess{s}(\nu, \nu)}{\|\nabla s\|^2} + \frac{k^2 t^2}{8} B(t).
\end{align*}
Inserting \eqref{eq: derivative of energy term with respect to s} and \eqref{align: derivative of area of level sets of s} into the preceding equation and combining the resulting one with \eqref{eq: proof of sharp monotonic quantity with s part 4}, it follows that
\begin{equation} \label{eq: proof of sharp monotonic quantity with s part 5}
    \int_0^r \left( \int_{s = t}  \left( \frac{1}{2s^2} + \frac{k}{4\|\nabla s\|^2} \right) \tracelessHess{s^2}(\nu, \nu) \right) \dd t = \int_0^r \left( \frac{1}{t} E'(t) - \frac{2}{t^2} E(t) + 2k A(t) - \frac{k}{2} t A'(t) - \frac{k^2}{4} t^2 B(t) \right) \dd t.
\end{equation}
Let us now integrate by parts the terms involving $E'$ and $A'$.
Lemma \ref{appendix-lemma: local AC of energy term} tells us that $E$ is locally absolutely continuous; whilst $A$ may, in general, not satisfy this property, it is equal almost everywhere with a locally absolutely continuous function, so we may still perform the integration by parts for the $A'$ term above (a more detailed explanation of this argument is provided in Lemma \ref{appendix-lemma: local AC of area of the level sets of s} and \eqref{appendix-eq: how to integrate by parts the derivative of the area of the level sets of s} in Appendix \ref{appendix-sec: integration over the (super)level sets of greens function}).
The asymptotics of $s$ near its singularity (cf.~Lemma \ref{appendix-lemma: asymptotics of greens function near singularity}) also show that $E(r), A(r) \sim 4 \pi r^2$ as $r \downarrow 0$, and we thus obtain that
\begin{equation} \label{eq: proof of sharp monotonic quantity with s part 6}
    \int_0^r \frac{1}{t} E'(t) \dd t = \frac{1}{r} E(r) + \int_0^r \frac{1}{t^2} E(t) \dd t, \qquad \int_0^r t A'(t) \dd t = r A(r) - \int_0^r A(t) \dd t.
\end{equation}
Substituting \eqref{eq: proof of sharp monotonic quantity with s part 6} into \eqref{eq: proof of sharp monotonic quantity with s part 5}, combining the resulting equation with \eqref{eq: proof of sharp monotonic quantity with s part 3} and \eqref{eq: proof of sharp monotonic quantity with s part 2}, and then simplifying and grouping the common terms, we finally obtain that
\begin{equation} \label{eq: proof of sharp monotonic quantity with s part 7}
    \begin{split}
        \frac{1}{2} \int_{s \leq r} \div \left( \frac{1}{\|\nabla s\|} \nabla \left( \|\nabla s\|^2 + \frac{k}{4} s^2 \right) \right) &= \int_0^r \left( - 2 \pi \chi(s = t) + \int_{s = t} \left( \frac{\scal - 6k}{2} + \frac{1}{2\|\nabla s^2\|^2} \|\!\tracelessHess{s^2}\|^2 \right) \right) \dd t \\
        &\quad \, + \frac{1}{r} E(r) + 3k \int_0^r A(t) \dd t - \frac{k}{2} r A(r) - \frac{3k^2}{16} \int_0^r t^2 B(t) \dd t.
    \end{split}
\end{equation}
Using \eqref{eq: proof of sharp monotonic quantity with s part 1} and \eqref{eq: proof of sharp monotonic quantity with s part 7} and simplifying terms gives us the differential identity
\begin{equation} \label{eq: proof of sharp monotonic quantity with s part 8}
    \begin{split}
        E'(r) - \frac{3}{r} E(r) + \frac{3k}{2} r A(r) &= 3k \int_0^r A(t) \dd t - \frac{3k^2}{16} \int_0^r t^2 B(t) \dd t \\
        &\quad \, - 2 \pi \int_0^r \chi(s = t) + \int_0^r \left( \int_{s = t} \left( \frac{\scal - 6k}{2} + \frac{1}{2\|\nabla s^2\|^2} \|\!\tracelessHess{s^2}\|^2 \right) \right) \dd t
    \end{split}
\end{equation}
With this equation, we may now proceed with proving \eqref{intro-eq: quantitative derivative of sharp monotonic quantity}, the quantitative version of Theorem \ref{intro-th: sharp monotonic quantity}.
Define
\begin{equation} \label{eq: definition of w for sharp monotonic quantity via s}
    w(r) := \frac{1}{r^2} E(r) + \frac{3k}{2r} \int_0^r A(t) \dd t + \frac{3k^2}{32r} \int_0^r (r^2 - t^2) B(t) \dd t.
\end{equation}
To prove Theorem \ref{intro-th: sharp monotonic quantity}, we need to show that the derivative of $(w(r) - 4\pi)/r$ is non-negative (cf.~\eqref{eq: what we need to prove the sharp monotonic quantity via s}).
As previously mentioned, $E$ is locally absolutely continuous (cf.~Lemma \ref{appendix-lemma: local AC of energy term}); the second term in the definition of $w$ also clearly satisfies the same property.
Lemma \ref{appendix-lemma: local AC in sharp monotonic quantity} shows that the last integral term from \eqref{eq: definition of w for sharp monotonic quantity via s} is also locally absolutely continuous, and \eqref{appendix-eq: derivative of third term in sharp monotonic quantity} tells us how to compute its derivative.
We then see that
\[
w'(r) = \frac{1}{r^2} E'(r) - \frac{2}{r^3} E(r) - \frac{3k}{2r^2} \int_0^r A(t) \dd t + \frac{3k}{2r} A(r) + \frac{3k^2}{32} \int_0^r B(t) \dd t + \frac{3k^2}{32r^2} \int_0^r t^2 B(t) \dd t,
\]
and thus
\begin{equation} \label{eq: proof of sharp monotonic quantity with s part 9}
    \begin{split}
        \frac{\dd}{\dd r} \left( \frac{w(r) - 4\pi}{r} \right) &\equiv \frac{w'(r)}{r} - \frac{w(r)}{r^2} + \frac{4\pi}{r^2} \\
        &= \frac{1}{r^3} E'(r) - \frac{3}{r^4} E(r) + \frac{3k}{2r^2} A(r) - \frac{3k}{r^3} \int_0^r A(t) \dd t + \frac{3k^2}{16 r^3} \int_0^r t^2 B(t) \dd t + \frac{4\pi}{r^2}.
    \end{split}
\end{equation}
Combining \eqref{eq: proof of sharp monotonic quantity with s part 8} with \eqref{eq: proof of sharp monotonic quantity with s part 9}, it follows that
\begin{equation}  \label{eq: proof of sharp monotonic quantity with s - done}
    \frac{\dd}{\dd r} \left( \frac{w(r) - 4\pi}{r} \right) = \frac{1}{r^3} \int_0^r \left[ 2\pi (2 - \chi(s = t)) + \int_{s = t} \left( \frac{1}{2} (\scal - 6k) + \frac{1}{2\|\nabla s^2\|^2} \|\!\tracelessHess{s^2} \|^2 \right) \right] \dd t,
\end{equation}
which concludes the proof of \eqref{intro-eq: quantitative derivative of sharp monotonic quantity}.
The monotonicity statement of Theorem \ref{intro-th: sharp monotonic quantity} is now immediate, as each term from the right-hand side of \eqref{eq: proof of sharp monotonic quantity with s - done} is non-negative; indeed, by assumption, all the regular level sets of $G$ (and hence of $s$) are path-connected, so $\chi(s = t) \leq 2$ for almost every $t > 0$, and $\scal - 6k$ and $\|\!\tracelessHess{s^2}\|^2$ are also non-negative.

For the rigidity statement, we proceed as follows.
Suppose that there is some regular value $r_0 > 0$ of $s$ so that
\[
\left. \frac{\dd }{\dd r} \right|_{r = r_0} \left( \frac{w(r) - 4\pi }{r} \right) = 0.
\]
This, together with \eqref{eq: proof of sharp monotonic quantity with s - done} and the remark that each term from the integrand on the right-hand side of \eqref{eq: proof of sharp monotonic quantity with s - done} is non-negative, implies (by continuity) that $\tracelessHess{s}^2 = 0$ on $\{s \leq r_0\}$.
By \eqref{eq: gradient of sharp gradient quantity is traceless hessian} from the proof of Proposition \ref{prop: drift bochner formula for sharp gradient quantity}, it follows that $\nabla (\|\nabla s\|^2 + ks^2/4) = 0$ on $\{s \leq r_0\}$.
We now claim that the subset $\{s < r_0\}$ is connected; let us prove this claim.
Since $\{s < r_0\}$ is an open subset of a three-dimensional manifold, it suffices to prove that $\{s < r_0\} \cup \{p\}$ is connected.
If, for the sake of contradiction, $\{s < r_0\} \cup \{p\} = \{G > 1/r_0\} \cup \{p\}$ were disconnected, then there would exist a connected component $U \subset (\{G > 1/r_0\} \cup \{p\})$ not containing the singularity $p$.
However, as $G$ is smooth and positive on $U$, it is bounded on $U$, and the maximum principle implies that $G$ attains is maximum on $\partial U \subset \partial \{G > 1/r_0\} = \{G = 1/r_0\}$, which is a contradiction as $G$ is cannot be constant on $\{G > 1/r_0\}$ (as a Green's function).
Thus, $\{G > 1/r_0\} \cup \{p\} \equiv \{s < r_0\} \cup \{p\}$ is connected, which concludes the proof of this claim.

By the previous paragraph, it follows that $\|\nabla s\|^2 + ks^2/4$ is constant on $\{s \leq r_0\}$; the asymptotics of $s$ near $p$ (cf.~Lemma \ref{appendix-lemma: asymptotics of greens function near singularity}) necessarily imply that $\|\nabla s\|^2 + ks^2/4 \to 1$ as $s \downarrow 0$, and thus $\|\nabla s\|^2 + ks^2/4 \equiv 1$ on $\{s \leq r_0\}$.
Since we may also compute (using \eqref{eq: Delta s^2} from the proof of Proposition \ref{prop: drift bochner formula for sharp gradient quantity})
\[
\hess{s^2} - (2 - ks^2)g = \tracelessHess{s^2} + \left( \frac{\Delta s^2}{3} - 2 + ks^2 \right) g = \tracelessHess{s^2} + 2 \left( \|\nabla s\|^2 + \frac{k}{4} s^2 - 1 \right) g,
\]
we obtain that $\|\!\hess{s^2} - (2 - ks^2)g\|^2 = \|\!\tracelessHess{s^2}\|^2 + 12 (\|\nabla s\|^2 + k s^2/4 - 1)^2$.
The previous computations tells us that both terms from this right-hand side are zero, and we thus have
\begin{equation} \label{eq: sharp monotonic quantity rigidity equation for gradient}
    \hess{s^2} = (2 - ks^2)g \text{ and } \|\nabla s\|^2 + k s^2/4 \equiv 1
\end{equation}
on $\{s \leq r_0\}$.
The second equation from \eqref{eq: sharp monotonic quantity rigidity equation for gradient} necessarily implies that $r_0 \leq 2/\sqrt{k}$, whilst the first, combined with the rigidity of warped products (cf.~{\cite[Section 1]{CheegerColdingWarpedProducts}} or {\cite[Corollary 4.3.4]{PetersenRG}}), tells us that, on $\{s \leq r_0\}$, the metric $g$ has the explicit form $g = db^2 + \snk^2(b) g_{\S^2}$, where $g_{\S^2}$ is the unit, round metric on the two-sphere, $b = (2/\sqrt{k}) \arcsin(s \sqrt{k}/2)$, and $\sn_k$ denotes the curvature-adapted sine function.
This means that $s = 2 \snk(\dist(p, \cdot)/2)$ and $G = (2 \snk (\dist(p, \cdot)/2))^{-1}$.

To finish the proof of rigidity, we employ the Positive Mass Theorem for the ``conformal'' blow-up of $(M, g)$.
Consider the metric $G^4 g$ on $M \setminus \{ p \}$; this is a complete Riemannian metric with scalar curvature equal to $G^{-4} (\scal - 6k)$, which is non-negative by assumption.
Lemma \ref{appendix-lemma: asymptotics of greens function near singularity} tells us that, near $p$, $G$ has an asymptotic expansion of the form $G = \dist(p, \cdot)^{-1} + A + O(\dist(p, \cdot))$ for a constant $A$ and that the metric $G^4 g$ is an asymptotically flat metric on $M \setminus \{ p \}$.
Although we are not working with the conformal Laplacian, the arguments from {\cite[Lemma 9.7]{LeeParkerYamabeProblem}} apply verbatim to show that the ADM mass of $(M \setminus \{ p \}, G^4 g)$ is equal to $2A$.
From the previous paragraph, we know that $G = (2 \snk(\dist(p, \cdot)/2))^{-1}$ near $p$, so we necessarily have that $A = 0$.
By the Positive Mass Theorem, it follows that $(M \setminus \{ p \}, G^4 g)$ is isometric to $\R^3$.
However, this implies that $\scal \equiv 6k$ on $M$, so $G$ is Green's function for the conformal Laplacian of $(M, g)$; thus, $(M, g)$ is also conformally diffeomorphic to the three-sphere.
These conditions necessarily imply that $(M, g)$ must be isometric to the three-sphere of curvature $k$.
This concludes the proof of Theorem \ref{intro-th: sharp monotonic quantity}.
\end{proof}
\section{The proof of Theorem \ref{intro-th: alternative sharp monotonic quantity}} \label{sec: alternative monotonic quantity proof of monotonicity}

In this section, we prove Theorem \ref{intro-th: alternative sharp monotonic quantity}.
As before, let $(M^3, g)$ be a closed, connected Riemannian three-manifold with $\scal \geq 6k$ for some $k > 0$, fix a point $p \in M$, and let $G$ be Green's function for the operator $- \Delta + 3k/4$, with singularity at $p$.
By the strong maximum principle, $G$ is positive on $M \setminus \{ p \}$.
Assume furthermore that all level sets of $G$ are path-connected and that $G \geq \sqrt{k}/2$ on $M \setminus \{p\}$, and put $s := G^{-1}$.
For ease of notation, in the rest of this section, we denote by 
\begin{equation} \label{eq: notation for G11}
    G_{11} := \left\langle \nabla \|\nabla G\|, \frac{\nabla G}{\|\nabla G\|} \right\rangle = \hess{G} \left( \frac{\nabla G}{\|\nabla G\|}, \frac{\nabla G}{\|\nabla G\|} \right),
\end{equation}
where the second equality follows because $d\|\nabla G\|^2 = 2\hess{G}(\nabla G, \cdot)$.

The proof of Theorem \ref{intro-th: alternative sharp monotonic quantity} uses similar ingredients to those from Section \ref{sec: sharp monotonic quantity proof of monotonicity}, such as rewriting the Ricci curvature along the level sets of $s$ using the traced Gauss equation.
However, we will proceed slightly differently from Section \ref{sec: sharp monotonic quantity proof of monotonicity}.
We will rewrite the statement of Theorem \ref{intro-th: alternative sharp monotonic quantity} purely in terms of $G$ and proceed with the proof using only the derivatives of $G$.
Even though one may employ a similar strategy as in Section \ref{sec: sharp monotonic quantity proof of monotonicity} -- using Proposition \ref{prop: divergence on regular level sets for sharp monotonic quantity} -- we will present a different proof, more in the spirit of the argument of Munteanu and Wang from {\cite{MunteanuWangComparisonPaper}}.
This makes it more apparent how the monotonic quantity arises and why the hypothesis that $G \geq \sqrt{k}/2$ is natural.

In terms of $G$, via the chain rule, the statement of Theorem \ref{intro-th: alternative sharp monotonic quantity} may be rewritten as the inequality
\begin{equation} \label{eq: alternative sharp monotonic quantity expression via G}
    \begin{split}
        \frac{\dd}{\dd r} \bigg( \frac{1}{r} &\int_{G = r} \|\nabla G\|^2 + \int_{G \geq r} \frac{3k(2G^2 + k - 6r^2)}{G^2(4G^2 - k)^2} \|\nabla G\|^3 \\
        &+ \int_{G \geq r} \left( \frac{3k}{4} - \frac{3k(G^2 - r^2)(4G^2 + 5k)}{8G^2(4G^2 - k)} \right) \|\nabla G\| - 4 \pi r \bigg) \leq 0,
    \end{split}
\end{equation}
with equality holding at some $r > \sqrt{k}/2$ only if $(M, g)$ is isometric to the three-sphere of curvature $k$.
As mentioned in Section \ref{sec: intro}, each integral term above is well-defined (\ie finite) and locally absolutely continuous for $r > \sqrt{k}/2$ in the range of $G$; these results are proved in Appendix \ref{appendix-sec: integration over the (super)level sets of greens function} (cf.~Lemmas \ref{appendix-lemma: local AC of energy term}, \ref{appendix-lemma: welldefinedness of alternative monotonic quantity}, and \ref{appendix-lemma: local AC of alternative monotonic quantity}).

To prove \eqref{eq: alternative sharp monotonic quantity expression via G}, we proceed with a similar argument as in {\cite[Section 2]{MunteanuWangComparisonPaper}}: we will rewrite the Bochner formula for $\|\nabla G\|^2$ along the regular level sets of $G$ using the traced Gauss equation and a sharp ``Kato identity'' for $\hess{G}$.
We will show (cf.~Proposition \ref{prop: sharp bochner identity via G for alternative monotonic quantity}) that if $\Sigma$ is a regular level set of $G$, then
\begin{equation} \label{eq: laplace of G along its level sets for alternative monotonic quantity}
    \begin{split}
        \Delta \|\nabla G\| &= - \frac{\|\nabla G\|}{2} \scal_\Sigma + \frac{15k}{4} \|\nabla G\| + \frac{3}{2} \frac{k - 2 G^2}{G(G^2 - k/4)} \|\nabla G\| (\Delta G - G_{11}) - \frac{3}{4} \frac{(k - 2 G^2)^2}{G^2(G^2 - k/4)^2} \|\nabla G\|^3 \\
        &\quad \, + \frac{\| \nabla^\Sigma \|\nabla G\| \|^2}{\|\nabla G\|} + \frac{\|(\hess{G}|_\Sigma)^\circ\|^2}{2 \|\nabla G\|} + \frac{\|\nabla G\|}{2} (\scal - 6k) \\
        &\quad \, + \frac{3}{4\|\nabla G\|} \left( \Delta G - G_{11} - \frac{k - 2 G^2}{G(G^2 - k/4)} \|\nabla G\|^2 \right)^2,
    \end{split}
\end{equation}
where $\scal_\Sigma$ denotes the scalar curvature of $\Sigma$ in the induced metric, $\nabla^\Sigma$ the tangential part along $\Sigma$ of the gradient, and $(\hess{G}|_\Sigma)^\circ$ is the traceless part of $\hess{G}|_\Sigma$.
It is now apparent that the condition $G \geq \sqrt{k}/2$ is required for the right-hand side of \eqref{eq: laplace of G along its level sets for alternative monotonic quantity} to be well-defined: this lower bound implies that $G > \sqrt{k}/2$ on every regular level set, hence $G^2 > k/4$ and all terms from the right-hand side of \eqref{eq: laplace of G along its level sets for alternative monotonic quantity} are well-defined.
Moreover, on the three-sphere of curvature $k$, one can check that $\Delta G - G_{11} = G^{-1} (G^2 - k/4)^{-1} (k - 2G^2) \|\nabla G\|^2$.
We will then obtain Theorem \ref{intro-th: alternative sharp monotonic quantity} the same way as we proved Theorem \ref{intro-th: sharp monotonic quantity}: by integrating $\Delta \|\nabla G\|$ over the superlevel set $\{G \geq r\}$ and computing this integral in two ways: one using the Divergence Theorem, and the other by applying the coarea formula and \eqref{eq: laplace of G along its level sets for alternative monotonic quantity}.

We begin with the traced Gauss equation for the regular level sets of $G$.
This is essentially Proposition \ref{prop: ricci curvature along regular level sets of s} written in terms of $G$ instead of $s$.

\begin{lemma} \label{lemma: exact codazzi equation for G}
If $\Sigma$ is a regular level set of $G$, then
\[
\ric \left( \nabla G, \nabla G \right) - \|\nabla \|\nabla G\|\|^2 = \frac{\|\nabla G\|^2}{2} \left( \scal - \scal_\Sigma \right) - \frac{1}{2} \|\! \hess{G}\|^2 + \frac{9k^2}{32} G^2 - \frac{3k}{4} G_{11} G,
\]
where $\scal_\Sigma$ denotes the scalar curvature of $\Sigma$.
\end{lemma}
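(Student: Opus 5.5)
The plan is to apply the traced Gauss equation directly to $\Sigma$, using the unit normal $\nu := \|\nabla G\|^{-1}\nabla G$, in the same way as in the proof of Proposition \ref{prop: ricci curvature along regular level sets of s}. The one extra input is the equation $\Delta G = \frac{3k}{4} G$, which holds on $M \setminus \{p\}$. Because $\Sigma$ is a regular level set, all quantities below are smooth along $\Sigma$.

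\textbf{Step 1: Gauss equation.} As in \eqref{eq: gauss codazzi and rearrangement on regular level sets of s part 1}, we have $2\ric(\nu,\nu) = \scal - \scal_\Sigma + H^2 - \|\two\|^2$. Here the second fundamental form is $\two = \|\nabla G\|^{-1}\hess{G}|_\Sigma$. The mean curvature is
\[
H = \div(\nu) = \|\nabla G\|^{-1}(\Delta G - G_{11}),
\]
which uses $d\|\nabla G\| = \hess{G}(\nu,\cdot)$. The sign convention for $H$ does not matter, since only $H^2$ appears. Multiplying by $\|\nabla G\|^2/2$ gives
\[
\ric(\nabla G,\nabla G) = \frac{\|\nabla G\|^2}{2}(\scal-\scal_\Sigma) + \frac12\left[(\Delta G - G_{11})^2 - \|\hess{G}|_\Sigma\|^2\right].
\]

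\textbf{Step 2: split the Hessian norm.} Decompose $\hess{G}$ relative to the splitting $T M = \R\nu \oplus T\Sigma$, as in Lemma \ref{lemma: norm of hessian of s squared along the regular level sets of s}. This gives
\[
\|\hess{G}\|^2 = G_{11}^2 + 2\|\nabla^\Sigma \|\nabla G\|\|^2 + \|\hess{G}|_\Sigma\|^2 .
\]
Since $\nabla\|\nabla G\| = \hess{G}(\nu)$, we also have $\|\nabla\|\nabla G\|\|^2 = G_{11}^2 + \|\nabla^\Sigma\|\nabla G\|\|^2$. Together these give
\[
-\|\hess{G}|_\Sigma\|^2 = -\|\hess{G}\|^2 + 2\|\nabla\|\nabla G\|\|^2 - G_{11}^2 .
\]

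\textbf{Step 3: substitute and use the PDE.} Substituting Step 2 into Step 1 and moving $\|\nabla\|\nabla G\|\|^2$ to the left-hand side, the bracket becomes
\[
(\Delta G - G_{11})^2 - G_{11}^2 - \|\hess{G}\|^2 = (\Delta G)^2 - 2G_{11}\,\Delta G - \|\hess{G}\|^2 .
\]
Inserting $\Delta G = \frac{3k}{4}G$ turns $\frac12\left[(\Delta G)^2 - 2G_{11}\Delta G\right]$ into $\frac{9k^2}{32}G^2 - \frac{3k}{4}G\,G_{11}$, which is exactly the claimed identity.

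There is no real obstacle here; the computation is pointwise and algebraic. The only points needing care are the bookkeeping of the mixed normal–tangential Hessian terms in Step 2, namely the factor $2$ and the identity for $\|\nabla\|\nabla G\|\|^2$.
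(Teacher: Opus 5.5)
Your proof is correct and follows essentially the same route as the paper: the traced Gauss equation with $\two = \|\nabla G\|^{-1}\hess{G}|_\Sigma$ and $H = \|\nabla G\|^{-1}(\Delta G - G_{11})$, then the decomposition $\|\hess{G}\|^2 = G_{11}^2 + 2\|\nabla^\Sigma\|\nabla G\|\|^2 + \|\hess{G}|_\Sigma\|^2$ together with $\|\nabla\|\nabla G\|\|^2 = G_{11}^2 + \|\nabla^\Sigma\|\nabla G\|\|^2$, and finally $\Delta G = 3kG/4$. The paper writes the same decomposition in an adapted orthonormal frame via the components $G_{ij}$, and your bookkeeping of the factor $2$ and the signs in Steps 2 and 3 checks out.
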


\begin{proof}
Put $\nu := \|\nabla G\|^{-1} \nabla G$.
As in the proof of Proposition \ref{prop: ricci curvature along regular level sets of s}, the traced Gauss equation and the Schoen-Yau rearrangement identity tell us that, on $\Sigma$, we have
\begin{equation} \label{eq: traced codazzi and schoen yau for level set of G}
    2\ric \left( \nu, \nu \right) = \scal - \scal_\Sigma + H^2 - \|\two\|^2,
\end{equation}
where $\two$ and $H$ are the second fundamental form and the mean curvature, respectively, of $\Sigma$, with respect to the unit normal vector field $\nu$.
As $\Sigma$ is a level set of $G$, we also know that
\begin{equation} \label{eq: second fundamental form and mean curvature of level set of G}
    \two = \frac{\hess{G}|_\Sigma}{\|\nabla G\|}, \quad H = \div (\nu).
\end{equation}
Since $\Delta G = 3kG/4$, we may explicitly compute
\begin{equation} \label{eq: mean curvature of the level set of G}
    H = \frac{\Delta G}{\|\nabla G\|} - \frac{\langle \nabla \|\nabla G\|, \nabla G \rangle }{\|\nabla G\|^2} = \frac{1}{\|\nabla G\|} \left( \frac{3k}{4} G - G_{11} \right).
\end{equation}

Now, let $(E_1, E_2, E_3)$ be a local orthonormal frame for $M$ with $E_1 = \nu$, and denote by $G_{ij} := \hess{G}(E_i, E_j)$; note that this notation is consistent with \eqref{eq: notation for G11}.
By \eqref{eq: second fundamental form and mean curvature of level set of G}, $\|\two\|^2 = \|\nabla G\|^{-2} \left( G_{22}^2 + 2 G_{23}^2 + G_{33}^2 \right)$, and so, together with \eqref{eq: mean curvature of the level set of G}, we have
\begin{equation} \label{eq: exact codazzi equation for G part 1}
    H^2 - \|\two\|^2 = \frac{1}{\|\nabla G\|^2} \left( \frac{9k^2}{16} G^2 - \frac{3k}{2} G_{11} G \right) + \frac{1}{\|\nabla G\|^2} \left( G_{11}^2 - G_{22}^2 - 2 G_{23}^2 - G_{33}^2 \right).
\end{equation}
The right-hand side of \eqref{eq: exact codazzi equation for G part 1} may be rewritten in terms of $\|\!\hess{G}\|^2$ and $\|\nabla \|\nabla G\|\|^2$ as follows: since $d \|\nabla G\| = \hess{G} \left( \nu, \cdot \right)$, we have $\|\nabla \|\nabla G\|\|^2 = G_{11}^2 + G_{12}^2 + G_{13}^2$, and so
\allowdisplaybreaks
\begin{align*}
    \|\!\hess{G}\|^2 &= G_{11}^2 + 2G_{12}^2 + 2 G_{13}^2 + (G_{22}^2 + 2 G_{23}^2 + G_{33}^2) \\
    &= 2 \|\nabla \|\nabla G\|\|^2 - G_{11}^2 + (G_{22}^2 + 2 G_{23}^2 + G_{33}^2).
\end{align*}
Substituting this into \eqref{eq: exact codazzi equation for G part 1}, we obtain that
\begin{equation} \label{eq: exact codazzi equation for G part 2}
    H^2 - \|\two\|^2 = \frac{1}{\|\nabla G\|^2} \left(  2 \|\nabla \|\nabla G\|\|^2 - \left( \|\!\hess{G}\|^2 - \frac{9k^2}{16} G^2 + \frac{3k}{2} G_{11} G \right) \right).
\end{equation}
The desired conclusion now follows by combining \eqref{eq: exact codazzi equation for G part 2} with \eqref{eq: traced codazzi and schoen yau for level set of G}.
\end{proof}

We continue with a sharp ``Kato identity'' for $G$, which we will write on the regular level sets of $G$.
This is essentially the analogue of Lemma \ref{lemma: norm of hessian of s squared along the regular level sets of s}.

\begin{lemma} \label{lemma: sharp kato in dimension three}
If $\Sigma$ is a regular level set of $G$, then
\[
\|\!\hess{G}\|^2 = \frac{3}{2} G_{11}^2 - \frac{3k}{4} G_{11} G + \frac{9k^2}{32} G^2 + 2 \|\nabla^\Sigma \|\nabla G\|\|^2 + \|(\hess{G}|_\Sigma)^\circ\|^2,
\]
where $\nabla^\Sigma$ denotes the tangential component along $\Sigma$ of the gradient and $(\hess{G}|_\Sigma)^\circ$ is the traceless part of $\hess{G}|_\Sigma$.
\end{lemma}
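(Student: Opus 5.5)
The identity is a pointwise decomposition of $\hess{G}$ in an adapted orthonormal frame along $\Sigma$, in the same spirit as Lemma \ref{lemma: norm of hessian of s squared along the regular level sets of s}. The only analytic input is the equation $\Delta G = 3kG/4$ on $M \setminus \{p\}$. There is no real obstacle; the only care needed is the identification of the tangential gradient of $\|\nabla G\|$ with the mixed Hessian entries.

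As in the proof of Lemma \ref{lemma: exact codazzi equation for G}, put $\nu := \|\nabla G\|^{-1}\nabla G$. Choose a local orthonormal frame $(E_1, E_2, E_3)$ along $\Sigma$ with $E_1 = \nu$, and write $G_{ij} := \hess{G}(E_i, E_j)$. Expanding the norm by blocks gives
\[
\|\!\hess{G}\|^2 = G_{11}^2 + 2(G_{12}^2 + G_{13}^2) + \|\!\hess{G}|_\Sigma\|^2 .
\]

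For the mixed terms, recall that $d\|\nabla G\| = \hess{G}(\nu, \cdot)$. Hence the tangential part of $\nabla\|\nabla G\|$ has components $G_{12}$ and $G_{13}$, and so
\[
G_{12}^2 + G_{13}^2 = \|\nabla^\Sigma \|\nabla G\|\|^2 .
\]

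For the tangential block, split $\hess{G}|_\Sigma$ into its traceless part and its trace part on the two-dimensional $\Sigma$:
\[
\|\!\hess{G}|_\Sigma\|^2 = \|(\hess{G}|_\Sigma)^\circ\|^2 + \tfrac12\big(\tr_\Sigma \hess{G}|_\Sigma\big)^2 .
\]
The trace is $\tr_\Sigma \hess{G}|_\Sigma = \Delta G - G_{11} = \tfrac{3k}{4}G - G_{11}$. Squaring and halving this trace gives
\[
\tfrac12 G_{11}^2 - \tfrac{3k}{4} G_{11} G + \tfrac{9k^2}{32} G^2 .
\]

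Adding the three contributions, the coefficient of $G_{11}^2$ is $1 + \tfrac12 = \tfrac32$. The remaining terms combine to exactly the right-hand side of the statement, which finishes the proof.
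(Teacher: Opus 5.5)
Your proposal is correct and follows essentially the same route as the paper: the same adapted frame with $E_1 = \nabla G/\|\nabla G\|$, the identification $G_{12}^2 + G_{13}^2 = \|\nabla^\Sigma \|\nabla G\|\|^2$, and the splitting of $\hess{G}|_\Sigma$ into traceless and trace parts with trace $\Delta G - G_{11} = \tfrac{3k}{4}G - G_{11}$. Your coefficient bookkeeping also checks out.
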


\begin{proof}
As in the proof of Lemma \ref{lemma: exact codazzi equation for G}, let $(E_1, E_2, E_3)$ be a local orthonormal frame for $M$ with $E_1 := \nabla G/\|\nabla G\|$, and put $G_{ij} := \hess{G}(E_i, E_j)$.
Then, $\|\nabla^\Sigma \|\nabla G\|\|^2 = G_{12}^2 + G_{13}^2$, and $\|\!\hess{G}|_\Sigma\|^2 = G_{22}^2 + 2 G_{23}^2 + G_{33}^2$, so, on $\Sigma$, we have
\begin{equation} \label{eq: hessian in terms of tangential components}
    \|\!\hess{G}\|^2 = G_{11}^2 + 2 \|\nabla^\Sigma \|\nabla G\|\|^2 + \|\!\hess{G}|_\Sigma\|^2.
\end{equation}
Since $\hess{G}|_\Sigma = (\hess{G}|_\Sigma)^\circ + (\tr_\Sigma (\hess{G}|_\Sigma)/2) g|_\Sigma$, we may also compute
\allowdisplaybreaks
\begin{align} \label{align: tangential hessian in terms of traceless component}
    \|\!\hess{G}|_\Sigma\|^2 &= \|(\hess{G}|_\Sigma)^\circ\|^2 + \frac{1}{2} (G_{22} + G_{33})^2 \nonumber \\
    &= \|(\hess{G}|_\Sigma)^\circ\|^2 + \frac{1}{2} (\Delta G - G_{11})^2 \nonumber \\
    &= \|(\hess{G}|_\Sigma)^\circ\|^2 + \frac{9k^2}{32} G^2 - \frac{3k}{4} G_{11} G + \frac{1}{2} G_{11}^2,
\end{align}
where the last equality follows because $\Delta G = 3kG/4$.
The desired result is now obtained by combining \eqref{eq: hessian in terms of tangential components} with \eqref{align: tangential hessian in terms of traceless component}.
\end{proof}

We also remark that, from the point of view of a Kato inequality for $G$, Lemma \ref{lemma: sharp kato in dimension three} implies that
\[
\|\!\hess{G}\|^2 \geq \frac{3}{2} \|\nabla \|\nabla G\|\|^2 - \frac{3k}{4} \|\nabla \|\nabla G\|\| G + \frac{9k^2}{32} G^2 = \frac{3}{2} \left( \|\nabla\|\nabla G\|\| - \frac{k}{4} G \right)^2 + \frac{3k^2}{16} G^2,
\]
which reduces to the usual Kato inequality for harmonic functions when $k = 0$, and a similar identity also holds in higher dimensions.
However, for the purpose of our monotonic quantity from Theorem \ref{intro-th: alternative sharp monotonic quantity} (cf.~also \eqref{eq: alternative sharp monotonic quantity expression via G}), it is more convenient to keep precise track of the mixed $G_{11} G$ term in $\|\!\hess{G}\|^2$.

With the above ingredients, we are now ready to prove \eqref{eq: laplace of G along its level sets for alternative monotonic quantity}.
This is an analogue of Proposition \ref{prop: divergence on regular level sets for sharp monotonic quantity} written only in terms of $G$.

\begin{proposition} \label{prop: sharp bochner identity via G for alternative monotonic quantity}
If $\Sigma$ is a regular level set of $G$, then
\begin{equation} \label{eq: sharp bochner identity on the level sets of G for alternative monotonic quantity}
    \begin{split}
        \Delta \|\nabla G\| &= - \frac{\|\nabla G\|}{2} \scal_\Sigma + \frac{15k}{4} \|\nabla G\| + \frac{3}{2} \frac{k - 2 G^2}{G(G^2 - k/4)} \|\nabla G\| (\Delta G - G_{11}) - \frac{3}{4} \frac{(k - 2 G^2)^2}{G^2(G^2 - k/4)^2} \|\nabla G\|^3 \\
        &\quad \, + \frac{\| \nabla^\Sigma \|\nabla G\| \|^2}{\|\nabla G\|} + \frac{\|(\hess{G}|_\Sigma)^\circ\|^2}{2 \|\nabla G\|} + \frac{\|\nabla G\|}{2} (\scal - 6k) \\
        &\quad \, + \frac{3}{4\|\nabla G\|} \left( \Delta G - G_{11} - \frac{k - 2 G^2}{G(G^2 - k/4)} \|\nabla G\|^2 \right)^2.
    \end{split}
\end{equation}
\end{proposition}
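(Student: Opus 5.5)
Starting from the Bochner formula for $G$, I plan to rewrite the Ricci term with Lemma \ref{lemma: exact codazzi equation for G} and the Hessian term with Lemma \ref{lemma: sharp kato in dimension three}. The final step is purely algebraic: complete the square in $\Delta G - G_{11}$.

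\textbf{Step 1 (Bochner).} Since $\Delta G = 3kG/4$, Bochner gives
\[
\tfrac12 \Delta \|\nabla G\|^2 = \|\hess{G}\|^2 + \ric(\nabla G, \nabla G) + \tfrac{3k}{4}\|\nabla G\|^2 .
\]
On a regular level set, the left side equals $\|\nabla G\|\Delta\|\nabla G\| + \|\nabla\|\nabla G\|\|^2$. Hence
\[
\|\nabla G\|\,\Delta\|\nabla G\| = \|\hess{G}\|^2 + \bigl(\ric(\nabla G,\nabla G) - \|\nabla\|\nabla G\|\|^2\bigr) + \tfrac{3k}{4}\|\nabla G\|^2 .
\]

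\textbf{Step 2 (substitute the lemmas).} I replace the bracket using Lemma \ref{lemma: exact codazzi equation for G}. This leaves $\tfrac12\|\hess{G}\|^2$ together with the scalar curvature terms $\tfrac{\|\nabla G\|^2}{2}(\scal - \scal_\Sigma)$. I then expand $\tfrac12\|\hess{G}\|^2$ by Lemma \ref{lemma: sharp kato in dimension three}. This produces $\|\nabla^\Sigma\|\nabla G\|\|^2$ and $\tfrac12\|(\hess{G}|_\Sigma)^\circ\|^2$, which match the stated terms after dividing by $\|\nabla G\|$.

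Writing $\scal = (\scal - 6k) + 6k$, the $6k$ part together with the $\tfrac{3k}{4}$ term gives $\tfrac{15k}{4}\|\nabla G\|^2$.

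What remains is a polynomial in $G_{11}$ and $G$:
\[
\tfrac34 G_{11}^2 - \tfrac{3k}{8}G_{11}G + \tfrac{9k^2}{64}G^2 + \tfrac{9k^2}{32}G^2 - \tfrac{3k}{4}G_{11}G .
\]
Set $X := \Delta G - G_{11} = \tfrac{3k}{4}G - G_{11}$. Then $\tfrac34 X^2 = \tfrac34 G_{11}^2 - \tfrac{9k}{8}G_{11}G + \tfrac{27k^2}{64}G^2$, and the remainder above is exactly $\tfrac34 X^2$. So, after dividing by $\|\nabla G\|$, I expect the exact identity
\[
\Delta\|\nabla G\| = -\tfrac{\|\nabla G\|}{2}\scal_\Sigma + \tfrac{15k}{4}\|\nabla G\| + \tfrac{\|\nabla G\|}{2}(\scal - 6k) + \frac{\|\nabla^\Sigma\|\nabla G\|\|^2}{\|\nabla G\|} + \frac{\|(\hess{G}|_\Sigma)^\circ\|^2}{2\|\nabla G\|} + \frac{3X^2}{4\|\nabla G\|}.
\]

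\textbf{Step 3 (complete the square).} Put $\phi := \frac{k - 2G^2}{G(G^2 - k/4)}$; this is well-defined because $G > \sqrt{k}/2$ on regular level sets. I add and subtract inside the last term:
\[
\tfrac34 X^2 = \tfrac34\bigl(X - \phi\|\nabla G\|^2\bigr)^2 + \tfrac32\phi X\|\nabla G\|^2 - \tfrac34\phi^2\|\nabla G\|^4 .
\]
Dividing by $\|\nabla G\|$ produces exactly the two cross terms in the statement, $\tfrac32\phi\|\nabla G\|(\Delta G - G_{11})$ and $-\tfrac34\phi^2\|\nabla G\|^3$. Since this is an identity for any $\phi$, the specific choice (dictated by the round sphere) only matters for the later monotonicity argument.

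The main obstacle is bookkeeping: checking the constants in Step 2 so that the $G_{11}$-polynomial collapses to exactly $\tfrac34 X^2$ with no leftover. I would verify this by comparing the $G_{11}^2$, $G_{11}G$ and $G^2$ coefficients separately.
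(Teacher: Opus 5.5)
Your proposal is correct and follows essentially the same route as the paper. The steps match: Bochner with $\Delta G = 3kG/4$, Lemma \ref{lemma: exact codazzi equation for G} for $\ric(\nabla G,\nabla G) - \|\nabla\|\nabla G\|\|^2$, Lemma \ref{lemma: sharp kato in dimension three} for $\tfrac12\|\hess{G}\|^2$, the split $\scal = (\scal - 6k) + 6k$ giving the $\tfrac{15k}{4}$ term, and completing the square around the model value $\phi\|\nabla G\|^2$. Your coefficient check that the leftover polynomial is exactly $\tfrac34(\Delta G - G_{11})^2$ is right; the paper writes this as $\tfrac{3}{4\|\nabla G\|}(G_{11} - \tfrac{3k}{4}G)^2$.
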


\begin{proof}
As $\Delta G = 3kG/4$, from the Bochner formula, we know that, away from the singularity of $G$, we have
\[
\frac{1}{2} \Delta \|\nabla G\|^2 = \|\!\hess{G}\|^2 + \frac{3k}{4} \|\nabla G\|^2 + \ric(\nabla G, \nabla G).
\]
Using the product rule on the left-hand side, this identity may be rewritten as
\begin{equation} \label{eq: sharp bochner inequality along level sets part 1}
    \Delta \|\nabla G\| = \frac{1}{\|\nabla G\|} \|\!\hess{G}\|^2 + \frac{3k}{4} \|\nabla G\| + \frac{1}{\|\nabla G\|} \ric(\nabla G, \nabla G) - \frac{1}{\|\nabla G\|} \|\nabla \|\nabla G\|\|^2.
\end{equation}
We use Lemma \ref{lemma: exact codazzi equation for G} to rewrite the last two terms from the right-hand side above and obtain that, on $\Sigma$,
\begin{equation}
    \Delta \|\nabla G\| = \frac{1}{2\|\nabla G\|} \|\!\hess{G}\|^2 + \frac{3k}{4} \|\nabla G\| + \frac{\|\nabla G\|}{2} \left( \scal - \scal_\Sigma \right) + \frac{9k^2}{32} \frac{G^2}{\|\nabla G\|} - \frac{3k}{4} \frac{G_{11} G}{\|\nabla G\|}.
\end{equation}
Now, we apply Lemma \ref{lemma: sharp kato in dimension three} for the Hessian term and recast the above equation as
\allowdisplaybreaks
\begin{align*}
    \Delta \|\nabla G\| &= \frac{3}{4 \|\nabla G\|} G_{11}^2  - \frac{3k}{8} \frac{G_{11} G}{\|\nabla G\|}  + \frac{9 k^2}{64} \frac{G^2}{\|\nabla G\|} + \frac{\|\nabla^\Sigma \|\nabla G\|\|^2}{\|\nabla G\|} + \frac{\|(\hess{G}|_\Sigma)^\circ\|^2}{2\|\nabla G\|} \nonumber \\
    &\quad \, + \frac{3k}{4} \|\nabla G\| + \frac{\|\nabla G\|}{2} \left( \scal - \scal_\Sigma \right) + \frac{9k^2}{32} \frac{G^2}{\|\nabla G\|} - \frac{3k}{4} \frac{G_{11} G}{\|\nabla G\|} \nonumber \\
    &= \frac{\|\nabla G\|}{2} \left( \scal - \scal_{\Sigma} \right) + \frac{3k}{4} \|\nabla G\| + \frac{3}{4 \|\nabla G\|} \left( G_{11} - \frac{3k}{4} G \right)^2 + \frac{\|\nabla^\Sigma \|\nabla G\|\|^2}{\|\nabla G\|} + \frac{\|(\hess{G}|_\Sigma)^\circ\|^2}{2\|\nabla G\|}.
\end{align*}
Finally, we add and subtract the term $3k \|\nabla G\|$ on the right-hand side and use that $\Delta G = 3kG/4$ to obtain the identity
\begin{equation} \label{eq: sharp bochner identity on the level sets of G for alternative monotonic quantity part 1}
    \begin{split}
        \Delta \|\nabla G\| &= - \frac{\|\nabla G\|}{2} \scal_\Sigma + \frac{15k}{4} \|\nabla G\| + \frac{3}{4\|\nabla G\|} \left( \Delta G - G_{11} \right)^2 \\
        &\quad \, + \frac{\| \nabla^\Sigma \|\nabla G\| \|^2}{\|\nabla G\|} + \frac{\|(\hess{G}|_\Sigma)^\circ\|^2}{2 \|\nabla G\|} + \frac{\|\nabla G\|}{2} (\scal - 6k).
    \end{split}
\end{equation}

Now, the idea is to rewrite the term $(\Delta G - G_{11})^2$ from the right-hand side of \eqref{eq: sharp bochner identity on the level sets of G for alternative monotonic quantity part 1} using only $G$ and $\nabla G$.
We proceed by looking at this term on the model space of the three-sphere of curvature $k$.
By {\cite[Lemma 2.1]{ManeaSGE}}, on the three-sphere, we have $G = (2 \snk(\dist(p, \cdot)/2))^{-1}$, where $\snk$ is the curvature-adapted sine function.
A direct computation then shows that, in this case, $\Delta G - G_{11} = -2 G^{-1} \|\nabla G\|^2 + kG/2$.
Moreover, on the three-sphere, by {\cite[Theorem 1.1]{ManeaSGE}}, we also have that $\|\nabla G\|^2 + kG^2/4 = G^4$; hence, we may rewrite
\begin{equation} \label{eq: Delta G - G11 on the three sphere for alternative monotonic quantity}
    \Delta G - G_{11} = \frac{k - 2G^2}{G(G^2 - k/4)} \|\nabla G\|^2.
\end{equation}
In general, since $G \geq \sqrt{k}/2$, we complete the square around this ``model'' equation, and we obtain that
\begin{equation} \label{eq: Delta G - G11 alternative in general}
    \begin{split}
        (\Delta G - G_{11})^2 &= \left( \Delta G - G_{11} - \frac{k - 2G^2}{G(G^2 - k/4)} \|\nabla G\|^2 \right)^2 + 2 \frac{k - 2G^2}{G(G^2 - k/4)} \|\nabla G\|^2 (\Delta G - G_{11}) \\
        &\quad \, - \frac{(k - 2G^2)^2}{G^2(G^2 - k/4)^2} \|\nabla G\|^4.
    \end{split}
\end{equation}
The proof of this proposition is finished by combining \eqref{eq: sharp bochner identity on the level sets of G for alternative monotonic quantity part 1} with \eqref{eq: Delta G - G11 alternative in general}.
\end{proof}

With the above, we are now ready to prove Theorem \ref{intro-th: alternative sharp monotonic quantity}.

\begin{proof}[Proof of Theorem \ref{intro-th: alternative sharp monotonic quantity}]
As mentioned at the beginning of this section (cf.~\eqref{eq: alternative sharp monotonic quantity expression via G}), it is equivalent to show that
\begin{equation} \label{eq: start of proof of alternative monotonic quantity}
    \begin{split}
        \frac{\dd}{\dd r} \bigg( \frac{1}{r} &\int_{G = r} \|\nabla G\|^2 + \int_{G \geq r} \frac{3k(2G^2 + k - 6r^2)}{G^2(4G^2 - k)^2} \|\nabla G\|^3 \\
        &+ \int_{G \geq r} \left( \frac{3k}{4} - \frac{3k(G^2 - r^2)(4G^2 + 5k)}{8G^2(4G^2 - k)} \right) \|\nabla G\| - 4 \pi r \bigg) \leq 0,
    \end{split}
\end{equation}
with equality holding at some $r > \sqrt{k}/2$ only if $(M, g)$ is isometric to the three-sphere of curvature $k$.

As in the proof of Theorem \ref{intro-th: sharp monotonic quantity}, we start by differentiating the ``energy term'' $\int_{G = r} \|\nabla G\|^2$ and computing the integral of $\Delta \|\nabla G\|$ over $\{G \geq r\}$ using the coarea formula and \eqref{eq: sharp bochner identity on the level sets of G for alternative monotonic quantity}; the two other integral terms from \eqref{eq: start of proof of alternative monotonic quantity} encompass the terms in \eqref{eq: sharp bochner identity on the level sets of G for alternative monotonic quantity} that do not depend on the extrinsic geometry of the regular level sets of $G$.
As such, denote by
\begin{equation} \label{eq: definition of E and A for G and alternative quantity}
    E(r) := \int_{G = r} \|\nabla G\|^2, \qquad A(r) := \mathrm{area}(G = r),
\end{equation}
where, as before, $\mathrm{area}(G = r)$ denotes the two-dimensional Hausdorff measure of the level set $\{G = r\}$.
Lemma \ref{appendix-lemma: local AC of energy term} (or rather, an analogous proof) shows that $E$ is locally absolutely continuous, with
\begin{equation} \label{eq: derivative of energy via G for alternative quantity}
    E'(r) = \int_{G = r} \frac{\div(\|\nabla G\| \nabla G)}{\|\nabla G\|} = \int_{G = r} (\Delta G + G_{11}) = \frac{3k}{4} r A(r) + \int_{G = r} G_{11}
\end{equation}
whenever $r$ is a regular value of $G$.

We now claim that whenever $r \geq \sqrt{k}/2$ is a regular value of $G$, we have the differential identity
\begin{equation} \label{eq: sharp equation between E, A, and RHS of bochner}
    \frac{1}{r^2} E'(r) - \frac{3k}{4r} A(r) = - \frac{2}{r^3} E(r) + 6 \int_r^\infty \frac{1}{t^4} E(t) \dd t - \frac{3k}{2} \int_r^\infty \frac{1}{t^2} A(t) \dd t - \int_r^\infty \frac{1}{t^2} \left( \int_{G = t} \frac{\Delta \|\nabla G\|}{\|\nabla G\|} \right) \dd t.
\end{equation}
This is the analogue of \eqref{eq: proof of sharp monotonic quantity with s part 8} from the proof of Theorem \ref{intro-th: sharp monotonic quantity}.
Let us prove this claim.
As in {\cite[Proof of Theorem 3.1]{MunteanuWangComparisonPaper}}, by the product rule and using that $\Delta G = 3kG/4$, we have
\begin{equation} \label{eq: sharp monotonicity part 1}
    \Delta \left( \frac{\|\nabla G\|}{G^2} \right) = \frac{\Delta \|\nabla G\|}{G^2} + 6 \frac{\|\nabla G\|^3}{G^4} - \frac{3k}{2} \frac{\|\nabla G\|}{G^2} - \frac{4}{G^3} \langle \nabla \|\nabla G\|, \nabla G \rangle.
\end{equation}
We now integrate this equation over the set $\{r \leq G \leq s\}$ when $\{G = s\}$ is a regular level set of $G$, apply the divergence theorem on the left-hand side, and let $s \to \infty$.
By the asymptotics of $G$ near its singularity (cf.~Lemma \ref{appendix-lemma: asymptotics of greens function near singularity}), the boundary term near the singularity converges to zero, \ie 
\[
\lim_{s \to \infty} \int_{G = s} \left\langle \nabla \left( \frac{\|\nabla G\|}{G^2} \right), \frac{\nabla G}{\|\nabla G\|} \right\rangle = \lim_{s \to \infty} \int_{G = s} \left( \frac{G_{11}}{G^2} - 2 \frac{\|\nabla G\|^2}{G^3} \right) = 0.
\]
From this and the Divergence Theorem, it follows that
\[
\int_{G \geq r} \Delta \left( \frac{\|\nabla G\|}{G^2} \right) = \int_{G = r} \left\langle \nabla \left( \frac{\|\nabla G\|}{G^2} \right), - \frac{\nabla G}{\|\nabla G\|} \right\rangle = - \frac{1}{r^2} \int_{G = r} G_{11} + \frac{2}{r^3} \int_{G = r} \|\nabla G\|^2,
\]
and equating this with the right-hand side of \eqref{eq: sharp monotonicity part 1}, we obtain the equation
\begin{equation} \label{eq: sharp monotonicity part 2}
    - \frac{1}{r^2} \int_{G = r} G_{11} + \frac{2}{r^3} \int_{G = r} \|\nabla G\|^2 = \int_{G \geq r} \left( \frac{\Delta \|\nabla G\|}{G^2} + 6 \frac{\|\nabla G\|^3}{G^4} - \frac{3k}{2} \frac{\|\nabla G\|}{G^2} - \frac{4}{G^3} \langle \nabla \|\nabla G\|, \nabla G \rangle \right).
\end{equation}
We now rewrite all terms in the above identity using only $E$, $A$, and $\Delta \|\nabla G\|$.
On the one hand, by \eqref{eq: derivative of energy via G for alternative quantity} and the definition of $E$ from \eqref{eq: definition of E and A for G and alternative quantity}, we have
\begin{equation} \label{eq: sharp monotonicity part 3}
    - \frac{1}{r^2} \int_{G = r} G_{11} + \frac{2}{r^3} \int_{G = r} \|\nabla G\|^2 = - \frac{1}{r^2}E'(r) + \frac{3k}{4r} A(r) + \frac{2}{r^3} E(r).
\end{equation}
On the other hand, by the coarea formula, we have
\allowdisplaybreaks
\begin{align} \label{align: sharp monotonicity part 4}
    &\int_{G \geq r} \left( \frac{\Delta \|\nabla G\|}{G^2} + 6 \frac{\|\nabla G\|^3}{G^4} - \frac{3k}{2} \frac{\|\nabla G\|}{G^2} - \frac{4}{G^3} \langle \nabla \|\nabla G\|, \nabla G \rangle \right) \nonumber \\
    &= \int_r^\infty \int_{G = t} \left( \frac{1}{t^2} \frac{\Delta \|\nabla G\|}{\|\nabla G\|} + \frac{6}{t^4} \|\nabla G\|^2 - \frac{3k}{2t^2} - \frac{4}{t^3} G_{11} \right) \dd t \nonumber \\
    &= \int_r^\infty \frac{1}{t^2} \left( \int_{G = t} \frac{\Delta \|\nabla G\|}{\|\nabla G\|} \right) \dd t + 6 \int_r^\infty \frac{1}{t^4} E(t) \dd t - \frac{3k}{2} \int_r^\infty \frac{1}{t^2} A(t) \dd t - 4 \int_r^\infty \frac{1}{t^3} \left( \int_{G = t} G_{11} \right) \dd t,
\end{align}
where in the last equality we used the definitions of $E$ and $A$ from \eqref{eq: definition of E and A for G and alternative quantity}.
We now use \eqref{eq: derivative of energy via G for alternative quantity} and integration by parts (with the remark that $t^{-3}E(t) \to 0$ as $t \to \infty$, by the asymptotics of $G$ near its singularity -- Lemma \ref{appendix-lemma: asymptotics of greens function near singularity}) to further rewrite
\allowdisplaybreaks
\begin{align*}
    - 4 \int_r^\infty \frac{1}{t^3} \left( \int_{G = t} G_{11} \right) \dd t &= - 4 \int_r^\infty \frac{1}{t^3} \left( E'(t) - \frac{3k}{4} t A(t) \right) \dd t \\
    &= \frac{4}{r^3} E(r) - 12 \int_r^\infty \frac{1}{t^4} E(t) \dd t + 3k \int_r^\infty \frac{1}{t^2} A(t) \dd t.
\end{align*}
The proof of the claim is now finished by substituting the above equation into \eqref{align: sharp monotonicity part 4} and then combining the resulting equation with \eqref{eq: sharp monotonicity part 2} and \eqref{eq: sharp monotonicity part 3}.

Now, for ease of notation, put $f(t) := t^{-1} (t^2 - k/4)^{-1} (k - 2t^2)$.
We now substitute \eqref{eq: sharp bochner identity on the level sets of G for alternative monotonic quantity} from Proposition \ref{prop: sharp bochner identity via G for alternative monotonic quantity} into the identity from \eqref{eq: sharp equation between E, A, and RHS of bochner}.
After using the Gauss-Bonnet Theorem and recalling the definitions of $E$ and $A$ from \eqref{eq: definition of E and A for G and alternative quantity}, we obtain the following equation:
\begin{equation} \label{eq: alternative almost final monotonic quantity via G part 0}
    \begin{split}
        \frac{1}{r^2} E'(r) - \frac{3k}{4r} A(r) &= - \frac{2}{r^3} E(r) + 6 \int_r^\infty \frac{1}{t^4} E(t) \dd t - \frac{3k}{2} \int_r^\infty \frac{1}{t^2} A(t) \dd t \\
        &\quad \, - \int_r^\infty \frac{1}{t^2} \left( - 2 \pi \chi(G = t) + \frac{15k}{4} A(t) + \frac{3}{2} f(t) \int_{G = t} (\Delta G - G_{11}) - \frac{3}{4} f(t)^2 E(t) \right) \dd t \\
        &\quad \, - \int_r^\infty \frac{1}{t^2} X(t) \dd t
    \end{split}
\end{equation}
where
\begin{equation} \label{eq: alternative excess to the 3 sphere}
    \begin{split}
        X(t) := \int_{G = t} \bigg( &\frac{\| \nabla^{\{G = t\}} \|\nabla G\| \|^2}{\|\nabla G\|^2} + \frac{\|(\hess{G}|_{\{G = t\}})^\circ\|^2}{2 \|\nabla G\|^2} + \frac{\scal - 6k}{2} \\
        &+ \frac{3}{4 \|\nabla G\|^2} (\Delta G - G_{11} - f(G) \|\nabla G\|^2)^2 \bigg)
    \end{split}
\end{equation}
is the ``geometric excess'' with respect to the three-sphere of curvature $k$ from the point of view of the geometry of the foliation of $M \setminus \{p\}$ by the level sets of $G$.
Note that each term in the integrand from the definition of $X$ is non-negative, and so, in particular, $X(t) \geq 0$.

Recalling that $\Delta G = 3kG/4$ on $M \setminus \{p\}$ and using \eqref{eq: derivative of energy via G for alternative quantity} to replace the $G_{11}$ term in \eqref{eq: alternative almost final monotonic quantity via G part 0}, the above equation may be rewritten as
\begin{equation} \label{eq: alternative almost final monotonic quantity via G part 1}
    \begin{split}
        \frac{1}{r^2} E'(r) - \frac{3k}{4r} A(r) &= - \frac{2}{r^3} E(r) + 6 \int_r^\infty \frac{1}{t^4} E(t) \dd t - \frac{3k}{2} \int_r^\infty \frac{1}{t^2} A(t) \dd t + \int_r^\infty \frac{2 \pi \chi(G = t)}{t^2} \dd t \\
        &\quad \, - \int_r^\infty \frac{1}{t^2} \left( \frac{15k}{4} + \frac{9k}{4} t f(t) \right) A(t) \dd t + \frac{3}{2} \int_r^\infty \frac{f(t)}{t^2} E'(t) \dd t + \frac{3}{4} \int_r^\infty \frac{f(t)^2}{t^2} E(t) \dd t \\
        &\quad \, - \int_r^\infty \frac{1}{t^2} X(t) \dd t.
    \end{split}
\end{equation}
Now, since $f(t) = -2t^{-1} + O(t^{-3})$ as $t \to \infty$ and $E(t) \sim 4 \pi t^2$ as $t \to \infty$ (by the asymptotics of $G$ near its singularity -- Lemma \ref{appendix-lemma: asymptotics of greens function near singularity}), we may integrate by parts the term
\[
\frac{3}{2} \int_r^\infty \frac{f(t)}{t^2} E'(t) \dd t = - \frac{3f(r)}{2r^2} E(r) - \int_r^\infty E(t) \frac{d}{dt} \left( \frac{3f(t)}{2t^2} \right) \dd t,
\]
and so \eqref{eq: alternative almost final monotonic quantity via G part 1} becomes
\begin{equation} \label{eq: alternative almost final monotonic quantity via G part 2}
    \begin{split}
        \frac{1}{r^2} E'(r) - \frac{3k}{4r} A(r) &= - \left( \frac{2}{r^3} + \frac{3f(r)}{2r^2} \right) E(r) - \int_r^\infty \frac{1}{t^2} \left( \frac{21k}{4} + \frac{9k}{4} t f(t) \right) A(t) \dd t + 2\pi \int_r^\infty \frac{\chi(G = t)}{t^2} \dd t \\
        &\quad \, + \int_r^\infty \left( \frac{6}{t^4} - \frac{d}{dt} \left( \frac{3f(t)}{2t^2} \right) + \frac{3f(t)^2}{4t^2} \right) E(t) \dd t - \int_r^\infty \frac{1}{t^2} X(t) \dd t.
    \end{split}
\end{equation}
A direct computation then shows that
\[
- \frac{1}{t^2} \left( \frac{21k}{4} + \frac{9k}{4} t f(t) \right) = - \frac{3k(4t^2 + 5k)}{4t^2(4t^2 - k)}, \qquad \frac{6}{t^4} - \frac{d}{dt} \left( \frac{3f(t)}{2t^2} \right) + \frac{3f(t)^2}{4t^2} = \frac{36k}{t^2(4t^2 - k)^2},
\]
so \eqref{eq: alternative almost final monotonic quantity via G part 2} may be rewritten as
\begin{equation} \label{eq: alternative almost final monotonic quantity via G part 3}
    \begin{split}
        \frac{1}{r^2} E'(r) - \frac{3k}{4r} A(r) &= - \left( \frac{2}{r^3} + \frac{3f(r)}{2r^2} \right) E(r) - \int_r^\infty \frac{3k(4t^2 + 5k)}{4t^2(4t^2 - k)} A(t) \dd t + \int_r^\infty \frac{2\pi \chi(G = t)}{t^2} \dd t \\
        &\quad \, + \int_r^\infty \frac{36k}{t^2(4t^2 - k)^2} E(t) \dd t - \int_r^\infty \frac{1}{t^2} X(t) \dd t.
    \end{split}
\end{equation}

To finish the proof of the monotonicity statement from Theorem \ref{intro-th: alternative sharp monotonic quantity}, we proceed as follows.
Consider the functions
\[
u_1(t, r) := \frac{3k(2t^2 + k - 6r^2)}{t^2(4t^2 - k)^2}, \qquad u_2(t, r) = \frac{3k}{4} - \frac{3k(t^2 - r^2)(4t^2 + 5k)}{8t^2(4t^2 - k)}.
\]
After applying the coarea formula, the statement from \eqref{eq: start of proof of alternative monotonic quantity} is equivalent to the identity
\[
\frac{d}{dr} \left( \frac{1}{r} E(r) + \int_r^\infty u_1(t, r) E(t) \dd t + \int_r^\infty u_2(t, r) A(t) \dd t - 4 \pi r \right) \leq 0,
\]
together with a rigidity statement, where $E$ and $A$ are defined in \eqref{eq: definition of E and A for G and alternative quantity}.
To obtain this identity, note first that multiplying \eqref{eq: alternative almost final monotonic quantity via G part 3} by $r$ and subtracting the term $r^{-2}E(r)$ from both sides gives us the equation
\begin{equation} \label{eq: alternative almost final monotonic quantity via G part 4}
    \begin{split}
        \frac{d}{dr }\left( \frac{E(r)}{r} \right) &= - \frac{3k}{r^2(4r^2 - k)} E(r) + \int_r^\infty \frac{36k r }{t^2(4t^2 - k)^2} E(t) \dd t + \frac{3k}{4} A(r) - \int_r^\infty \frac{3kr(4t^2 + 5k)}{4t^2(4t^2 - k)} A(t) \dd t \\
        &\quad \, + r \int_r^\infty \frac{2 \pi \chi(G = t)}{t^2} \dd t - r \int_r^\infty \frac{1}{t^2} X(t) \dd t.
    \end{split}
\end{equation}
Since the functions $u_1$ and $u_2$ are chosen so that
\[
\frac{\partial u_1}{\partial r} = - \frac{36kr}{t^2(4t^2 - k)^2}, \qquad u_1(r, r) = - \frac{3k}{r^2(4r^2 - k)}
\]
and
\[
\frac{\partial u_2}{\partial r} = \frac{3kr(4t^2 + 5k)}{4t^2(4t^2 - k)}, \qquad u_2(r, r) = \frac{3k}{4},
\]
combining these identities with \eqref{eq: alternative almost final monotonic quantity via G part 4}, we immediately obtain that
\begin{equation} \label{eq: alternative monotonic quantity final equation via G}
    \frac{d}{dr} \left( \frac{E(r)}{r} + \int_r^\infty u_1(t, r) E(t) \dd t + \int_r^\infty u_2(t, r) A(t) \dd t \right) = r \int_r^\infty \frac{2 \pi \chi(G = t)}{t^2} \dd t - r \int_r^\infty \frac{1}{t^2} X(t) \dd t.
\end{equation}
Finally, under the assumption that all regular level sets of $G$ are path-connected, it follows that $\chi(G = t) \leq 2$ for almost every $t$ in the range of $G$, and thus
\begin{equation} \label{eq: alternative monotonic quantity final equation via G 2}
    \frac{d}{dr} \left( \frac{E(r)}{r} + \int_r^\infty u_1(t, r) E(t) \dd t + \int_r^\infty u_2(t, r) A(t) \dd t - 4 \pi r \right) =  r \int_r^\infty \frac{2 \pi (\chi(G = t) - 2)}{t^2} \dd t - r \int_r^\infty \frac{1}{t^2} X(t) \dd t,
\end{equation}
which completes the proof of the monotonicity part of Theorem \ref{intro-th: alternative sharp monotonic quantity} (cf.~\eqref{eq: start of proof of alternative monotonic quantity}), as the above right-hand side is non-positive.

For the corresponding rigidity statement, we proceed in a similar manner as we did for Theorem \ref{intro-th: sharp monotonic quantity}.
Suppose that there is some $r_0 > \sqrt{k}/2$ so that the above derivative is equal to zero at $r_0$.
Equation \eqref{eq: alternative monotonic quantity final equation via G 2} then tells us that $\chi(G = t) = 2$ and $X(t) = 0$ for almost every $t \geq r_0$.
However, by definition of $X(t)$ (cf.~\eqref{eq: alternative excess to the 3 sphere}), $X(t) = 0$ implies that, on $\{G = t\}$, we have
\begin{equation} \label{eq: alternative monotonicity rigidity via G}
    \frac{\| \nabla^{\{G = t\}} \|\nabla G\| \|^2}{\|\nabla G\|^2} = \frac{\|(\hess{G}|_{\{G = t\}})^\circ \|^2}{2 \|\nabla G\|^2} = \frac{1}{2} (\scal - 6k) = \frac{3}{4\|\nabla G\|^2} \left( (\Delta G - G_{11}) - f(G)\|\nabla G\|^2 \right)^2 = 0.
\end{equation}
The first equality from \eqref{eq: alternative monotonicity rigidity via G} (together with the connectedness of the regular level sets of $G$) tells us that $\|\nabla G\|$ is constant on $\{G = t\}$, whilst the second one tells us that $\hess{G}|_{\{G = t\}} = (\tr_\Sigma(\hess{G}|_{\{G = t\}})/2) g|_{\{G = t\}} = ((\Delta G - G_{11})/2) g|_{\{G = t\}}$.
Combining this with the last equality from \eqref{eq: alternative monotonicity rigidity via G} and recalling that $\Delta G = 3kG/4$ on $M \setminus \{p\}$, we obtain that, on $\{G = t\}$, we have
\begin{equation} \label{eq: alternative monotonicity rigidity via G part 2}
    \hess{G} = \frac{1}{\|\nabla G\|^2} \left( \frac{3k}{4} G - f(G) \|\nabla G\|^2 \right) dG^2 + \frac{1}{2} f(G) \|\nabla G\|^2 g|_{\{G = t\}}.
\end{equation}
A direct computation now shows that the function $G^{-8} (G^2 - k/4)^2 (\|\nabla G\|^2 + kG^2/4 - G^4)$ has zero derivative at every point of $\{G = t\}$.
Indeed, by \eqref{eq: alternative monotonicity rigidity via G part 2}, it follows that
\[
d \|\nabla G\|^2 = 2 \hess{G}(\nabla G, \cdot) = \left( \frac{3k}{2} G - 2 f(G) \|\nabla G\|^2 \right) dG,
\]
and the desired conclusion is now implied by the chain rule, as we now have
\[
d \left( \|\nabla G\|^2 + \frac{k}{4} G^2 - G^4 \right) = - 2 f(G) \left( \|\nabla G\|^2 + \frac{k}{4} G^2 - G^4 \right) dG
\]
and $d (G^{-8} (G^2 - k/4)^2) = 2 f(G) G^{-8} (G^2 - k/4)^2 dG$.

Since $G^{-8} (G^2 - k/4)^2 (\|\nabla G\|^2 + kG^2/4 - G^4)$ is a smooth function on $\{G > \sqrt{k}/2\}$, the density of the regular points of $G$ (cf.~Lemmas \ref{appendix-lemma: regular values of G} and \ref{appendix-lemma: nodal sets have finite area}) implies that this function must have zero derivative at all points in $\{G > r_0\}$.
Since $\{G > r_0\}$ is a connected subset of $M$ (cf.~the paragraph before \eqref{eq: sharp monotonic quantity rigidity equation for gradient} at the end of the proof of Theorem \ref{intro-th: sharp monotonic quantity}), it follows that $G^{-8} (G^2 - k/4)^2 (\|\nabla G\|^2 + kG^2/4 - G^4)$ is constant on $\{G > r_0\}$.

Combining the previous paragraph with the asymptotics of $G$ near $p$ (cf.~Lemma \ref{appendix-lemma: asymptotics of greens function near singularity}) and the inequality $G^{-8} (G^2 - k/4)^2 > 0$, we obtain that $\|\nabla G\|^2 + kG^2/4 = G^4$ on $\{G > r_0\}$.
In terms of $s \equiv G^{-1}$, this means that $\|\nabla s\|^2 + ks^2/4 = 1$ on $\{G > r_0\} = \{s < 1/r_0\}$, and inserting this into \eqref{eq: alternative monotonicity rigidity via G part 2} allows us to conclude that $\hess{s^2} = (2 - ks^2)g$ on $\{G > r_0\}$.
We may now apply verbatim the argument from \eqref{eq: sharp monotonic quantity rigidity equation for gradient} onwards in the proof of Theorem \ref{intro-th: sharp monotonic quantity}, and we may conclude that $(M, g)$ must be isometric to the three-sphere of curvature $k$.
\end{proof}
\section{Applications of Theorems \ref{intro-th: sharp monotonic quantity} and \ref{intro-th: alternative sharp monotonic quantity}} \label{sec: sharp monotonic quantity applications}

In this section, we discuss some applications of Theorems \ref{intro-th: sharp monotonic quantity} and \ref{intro-th: alternative sharp monotonic quantity}.
As before, let $(M, g)$ be a closed, connected, Riemannian three-manifold with $\scal \geq 6k$ for some $k > 0$; fix a point $p \in M$ and let $G$ be Green's function for the operator $-\Delta + 3k/4$, with singularity at $p$, and put $s := G^{-1}$.
We will describe how both theorems may be viewed as establishing the monotonicity of an ``enclosed mass'' for the superlevel sets of $s$ (in the ``conformal'' blow-up $(M \setminus \{p\}, G^4 g)$), and how the quantitative expressions for the derivatives of the monotonic quantities may be used to obtain mass-controlled, weighted Cohn-Vossen bounds for the scalar curvature.

We start with the consequences of Theorem \ref{intro-th: sharp monotonic quantity}.
Let us assume, for the rest of this section, that all regular level sets of $G$ are path-connected, and that all integral terms from the statement of Theorem \ref{intro-th: sharp monotonic quantity} are finite.
Let us compute the asymptotic expansion of the monotonic quantity from \eqref{intro-eq: sharp monotonic quantity} in Theorem \ref{intro-th: sharp monotonic quantity} as $r \downarrow 0$ (cf.~\eqref{intro-eq: asymptotics of sharp monotonic quantity near the pole gives the mass}), and explain how the ADM mass of the manifold $(M \setminus \{p\}, G^4 g)$ is recovered from this.
By Lemma \ref{appendix-lemma: asymptotics of greens function near singularity}, near $p$, we have
\begin{equation} \label{eq: ADM mass asymptotics of greens function}
    s = \dist(p, \cdot) - A \, \dist(p, \cdot)^2 + O(\dist(p, \cdot)^3), \qquad \|\nabla s\| =  1 - 2A \, \dist(p, \cdot) + O(\dist(p, \cdot)^2),
\end{equation}
where $A \geq 0$ is a constant equal to half the ADM mass of the asymptotically flat manifold $(M \setminus \{p\}, G^4 g)$, which we denote by $m(M, p)$.
Inverting the first asymptotic relation from \eqref{eq: ADM mass asymptotics of greens function}, we obtain that $\dist(p, \cdot) = s(1 + As + O(s^2))$ as $s \downarrow 0$, \ie that $\dist(p, \cdot) = r(1 + Ar + O(r^2))$ on $\{s = r\}$.
This further implies that, on $\{s = r\}$, we have
\begin{equation} \label{eq: asymptotics of nabla s squared on the level set s = r}
    \|\nabla s\|^2 = 1 - 4 A \, \dist(p, \cdot) + O(\dist(p, \cdot)^2) = 1 - 4A r + O(r^2).
\end{equation}
An analogous argument to that from {\cite[Proposition A.3]{ManeaSGE}} shows that the area of the level set $\{s = r\}$ is asymptotic to area of the metric sphere $\partial B(p, r(1 + Ar + O(r^2)))$, which gives us the expansion
\begin{equation} \label{eq: asymptotics of area of small metric sphere}
    \mathrm{area}(s = r) = 4 \pi r^2(1 + Ar + O(r^2))^2 + O(r^4) = 4 \pi r^2(1 + 2Ar + O(r^2))
\end{equation}
Combining \eqref{eq: asymptotics of area of small metric sphere} with \eqref{eq: asymptotics of nabla s squared on the level set s = r}, we obtain that, as $r \downarrow 0$,
\begin{equation} \label{eq: ADM mass asymptotics of energy term}
    \frac{1}{r^3} \int_{s = r} \|\nabla s\|^2 = \frac{1}{r^3} \cdot 4 \pi r^2(1 + 2 A r + O(r^2)) \cdot (1 - 4Ar + O(r^2)) = \frac{4 \pi}{r} - 8 \pi A + O(r).
\end{equation}

This computes the asymptotics of the first integral term from \eqref{intro-eq: sharp monotonic quantity}. 
For the other terms, we proceed as follows.
By the coarea formula, $\int_{s \leq r} \|\nabla s\| = \int_0^r \mathrm{area}(s = t) \dd t$, and so, by \eqref{eq: asymptotics of area of small metric sphere}, it follows that
\begin{equation} \label{eq: ADM mass asymptotics of 2nd integral term in sharp monotonic quantity}
    \frac{3k}{2r^2} \int_0^r \mathrm{area}(s = t) \dd t = \frac{3k}{2r^2} \int_0^r \left( 4 \pi t^2 + O(t^3) \right) \dd t = O(r)
\end{equation}
as $r \downarrow 0$.
Similarly, by \eqref{eq: asymptotics of nabla s squared on the level set s = r}, \eqref{eq: asymptotics of area of small metric sphere}, and the coarea formula,
\allowdisplaybreaks
\begin{align} \label{align: ADM mass asymptotics of 3rd integral term in sharp monotonic quantity}
    \frac{3k^2}{32r^2} \int_{s \leq r} \frac{r^2 - s^2}{\|\nabla s\|} &= \frac{3k^2}{32r^2} \int_0^r (r^2 - t^2) \left( \int_{s = t} \frac{1}{\|\nabla s\|^2} \right) \dd t \nonumber \\
    &= \frac{3k^2}{32r^2} \int_0^r (r^2 - t^2) \cdot 4 \pi t^2 (1 + 2At + O(t^2)) \cdot (1 + 4A t + O(t^2)) \dd t \nonumber \\
    &= O(r^3).
\end{align}
Combining \eqref{eq: ADM mass asymptotics of energy term} with \eqref{eq: ADM mass asymptotics of 2nd integral term in sharp monotonic quantity} and \eqref{align: ADM mass asymptotics of 3rd integral term in sharp monotonic quantity} and recalling that $2A = m(M, p)$, we obtain a proof for \eqref{intro-eq: asymptotics of sharp monotonic quantity near the pole gives the mass}.

\begin{corollary} \label{cllr: ADM mass via sharp monotonic quantity}
We have
\[
m(M, p) = - \frac{1}{4\pi} \lim_{r \downarrow 0} \left( \frac{1}{r^3} \int_{s = r} \|\nabla s\|^2 + \frac{3k}{2r^2} \int_{s \leq r} \|\nabla s\| + \frac{3k^2}{32 r^2} \int_{s \leq r} \frac{r^2 - s^2}{\|\nabla s\|} - \frac{4 \pi}{r} \right).
\]
\end{corollary}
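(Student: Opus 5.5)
The corollary follows by combining the three asymptotic expansions just established as $r \downarrow 0$, so the plan is to add them and take the limit. By \eqref{eq: ADM mass asymptotics of energy term},
\[
\frac{1}{r^3} \int_{s = r} \|\nabla s\|^2 - \frac{4\pi}{r} = -8\pi A + O(r).
\]
By \eqref{eq: ADM mass asymptotics of 2nd integral term in sharp monotonic quantity}, the second integral term in the monotonic quantity of Theorem \ref{intro-th: sharp monotonic quantity} is $O(r)$. By \eqref{align: ADM mass asymptotics of 3rd integral term in sharp monotonic quantity}, the third term is $O(r^3)$.

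Summing, the bracketed quantity equals $-8\pi A + O(r)$, so its limit as $r \downarrow 0$ is $-8\pi A$. Multiplying by $-1/(4\pi)$ gives $2A$. This equals $m(M,p)$ by the identification of $A$ as half the ADM mass of $(M \setminus \{p\}, G^4 g)$, which uses Lemma \ref{appendix-lemma: asymptotics of greens function near singularity} and the argument of \cite[Lemma 9.7]{LeeParkerYamabeProblem}.

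The only points to check are that each expansion is legitimate.

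For the energy term, two ingredients are needed. First, the inversion $\dist(p,\cdot) = r(1 + Ar + O(r^2))$ on $\{s = r\}$ gives $\|\nabla s\|^2 = 1 - 4Ar + O(r^2)$ uniformly on the level set. Second, the area expansion \eqref{eq: asymptotics of area of small metric sphere} holds; this is the step needing the most care. For it, one writes $\{s = r\}$ as a radial graph over the geodesic sphere of radius $r(1+Ar)$ with $O(r^3)$ deviation, using the uniform expansion of $s$ in normal coordinates, as in \cite[Proposition A.3]{ManeaSGE}.

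For the third term, we use the standing assumption that it is finite. Near $p$, $\|\nabla s\|$ is bounded below by $1/2$, so the integrand $(r^2-t^2)\int_{s=t}\|\nabla s\|^{-2}$ is $O(r^2 t^2)$. Hence the integral over $[0,r]$ is $O(r^5)$, and after dividing by $r^2$ the term is $O(r^3)$.

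Since these estimates have already been recorded above, the proof reduces to this bookkeeping.
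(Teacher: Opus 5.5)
Your proposal is correct and follows the paper's argument exactly. You sum the expansion $\frac{1}{r^3}\int_{s=r}\|\nabla s\|^2 = \frac{4\pi}{r} - 8\pi A + O(r)$ with the $O(r)$ and $O(r^3)$ bounds for the two sublevel-set terms, then identify $2A = m(M,p)$; your added justifications (the lower bound $\|\nabla s\| \geq 1/2$ near $p$ for the third term, and the radial-graph argument for the area expansion) match how the paper obtains those estimates.
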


Thus, the asymptotics of the quantity from Theorem \ref{intro-th: sharp monotonic quantity} recover the ADM mass of $(M \setminus \{ p \}, G^4 g)$.
If we define
\begin{equation} \label{eq: quasi-local mass using sharp monotonic quantity}
    m(r) := - \frac{1}{4 \pi r^3} \int_{s = r} \|\nabla s\|^2 - \frac{3\pi k}{8 \pi r^2} \int_{s \leq r} \|\nabla s\| - \frac{3 \pi k^2}{128 \pi r^2} \int_{s \leq r} \frac{r^2 - s^2}{\|\nabla s\|} + \frac{1}{r},
\end{equation}
then Theorem \ref{intro-th: sharp monotonic quantity} shows that $m' \leq 0$ almost everywhere.
Since $m$ is locally absolutely continuous (cf.~Appendix \ref{appendix-sec: integration over the (super)level sets of greens function}) with $m(r) \to m(M, p)$ as $r \downarrow 0$ (by Corollary \ref{cllr: ADM mass via sharp monotonic quantity}), the Monotone Convergence Theorem allows us to write $m(r) - m(M, p) = \int_0^r m'(t) \dd t$, and thus, by \eqref{intro-eq: quantitative derivative of sharp monotonic quantity}, we have
\allowdisplaybreaks
\begin{align} \label{align: mass and quasilocal mass difference via sharp monotonic quantity}
    m(r) - m(M, p) &= - \frac{1}{4 \pi} \int_0^r \frac{1}{t^3} \int_0^t \left[ 2\pi (2 - \chi(s = u)) + \int_{s = u} \left( \frac{1}{2} (\scal - 6k) + \frac{1}{2\|\nabla s^2\|^2} \|\!\tracelessHess{s^2} \|^2 \right) \dd u \right] \dd t \nonumber \\
    &= \frac{1}{8 \pi} \int_0^r \left( \frac{1}{u^2} - \frac{1}{r^2} \right) \left[ 2\pi (2 - \chi(s = u)) + \int_{s = u} \left( \frac{1}{2} (\scal - 6k) + \frac{1}{2\|\nabla s^2\|^2} \|\!\tracelessHess{s^2} \|^2 \right) \right] \dd u,
\end{align}
where the last equality follows from Tonelli's theorem.
Evaluating \eqref{align: mass and quasilocal mass difference via sharp monotonic quantity} at $r = \max_M s$ gives an expression for $m(M, p)$ in terms of integrals involving only Green's function $G$ and the scalar curvature of $M$.
This is in a similar spirit to {\cite[Theorem 1.1]{GurskyMalchiodi}}, where the authors obtained a description of the ADM mass of closed, Einstein four-manifolds with positive scalar curvature in terms of integrals involving Green's function of the conformal Laplacian.

We can also interpret $m(r)$ as the ``enclosed mass'' of the region $\{s \geq r\}$, or as a ``localized mass at scale $r$'' (away from the ``exterior region'' $\{s \leq r\}$), and the right-hand side of \eqref{align: mass and quasilocal mass difference via sharp monotonic quantity} as a geometric expression for the difference between the ADM mass of $(M \setminus \{p\}, G^4 g)$ and the corresponding localization.
This quantity also bears similarities to a \emph{quasi-local mass} for domains in asymptotically flat manifolds.
Indeed, in {\cite[Section 1]{AndersonQuasiLocalMass}}, Anderson described five properties that are desired for a notion of quasi-local mass.
In Anderson's notation, our quantity $m$ is only defined for the superlevel sets of $s$, but satisfies QL0, QL2, and QL3 for these subsets; Theorem \ref{intro-th: sharp monotonic quantity} may be seen as the proof of property QL2, the monotonicity of the enclosed mass.
Moreover, by the Positive Mass Theorem, $m(M, p) \geq 0$, so $m(r) \geq 0$ at least for $r$ close to zero, which is the content of QL1.
However, the quantity $m(r)$ does not satisfy property QL4, since it does not depend only on the geometry of the level set $\{s = r\}$, but rather on the entire ``exterior region'' $\{s \leq r\}$.

Another simple application we may obtain using Theorem \ref{intro-th: sharp monotonic quantity} is a ``mass-controlled'' weighted Cohn-Vossen bound for the excess of the scalar curvature of $M$ (with respect to the three-sphere), analogous to {\cite[Corollary 0.6]{CMGradientEsimatesScalarCurvature}}.
Specifically, \eqref{align: mass and quasilocal mass difference via sharp monotonic quantity} immediately implies the following bound.

\begin{corollary} \label{cllr: cohn-vossen bound for scalar curvature using sharp monotonic quantity}
For almost every $r$ in the range of $s$, we have
\[
\int_0^r \left( \frac{1}{u^2} - \frac{1}{r^2} \right) \left( \int_{s = u} (\scal - 6k) \right) \dd u \leq 16 \pi (m(M, p) - m(r)).
\]
\end{corollary}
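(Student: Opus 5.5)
The corollary should follow directly from the integral identity \eqref{align: mass and quasilocal mass difference via sharp monotonic quantity}, by discarding the non-negative terms in the bracket. First fix $r$ in the range of $s$ at which that identity holds. The identity is available for almost every $r$: it needs $m$ to be locally absolutely continuous with $m(u) \to m(M, p)$ as $u \downarrow 0$, which is Corollary \ref{cllr: ADM mass via sharp monotonic quantity} combined with Appendix \ref{appendix-sec: integration over the (super)level sets of greens function}. It also needs the finiteness of the third integral term, which is a standing assumption in this section.

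Multiplying \eqref{align: mass and quasilocal mass difference via sharp monotonic quantity} by $8\pi$ gives
\[
8\pi\,(m(r) - m(M,p)) = \int_0^r \left(\frac{1}{u^2} - \frac{1}{r^2}\right)\left[2\pi(2 - \chi(s = u)) + \int_{s = u}\left(\frac{\scal - 6k}{2} + \frac{\|\tracelessHess{s^2}\|^2}{2\|\nabla s^2\|^2}\right)\right] \dd u .
\]
Wait — the sign needs care. The first line of \eqref{align: mass and quasilocal mass difference via sharp monotonic quantity} carries a factor $-1/(4\pi)$, while the displayed Tonelli form has lost it. Since $m' \le 0$ and $m(r) \le m(M,p)$, the correct relation must be
\[
8\pi\,(m(M,p) - m(r)) = \int_0^r \left(\frac{1}{u^2} - \frac{1}{r^2}\right)[\cdots]\,\dd u .
\]
I will rederive this by applying Tonelli to $-\frac{1}{4\pi}\int_0^r t^{-3}\int_0^t(\cdot)\,\dd u\,\dd t$. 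The integrand is non-negative, and $\int_u^r t^{-3}\,\dd t = \frac12\left(u^{-2} - r^{-2}\right)$, which produces the factor $1/(8\pi)$ and the weight $u^{-2} - r^{-2}$.

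Next, bound the bracket from below. The weight $u^{-2} - r^{-2}$ is non-negative for $u \in (0, r]$. Every regular level set is path-connected, so $\chi(s = u) \le 2$ for almost every $u$ (Lemma \ref{appendix-lemma: regular values of G}). The traceless-Hessian term is also non-negative. Dropping both of these terms gives
\[
8\pi\,(m(M,p) - m(r)) \ \ge\ \frac{1}{2}\int_0^r \left(\frac{1}{u^2} - \frac{1}{r^2}\right)\int_{s = u}(\scal - 6k)\,\dd u .
\]
Multiplying by $2$ yields the stated inequality with constant $16\pi$.

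The main obstacle is the justification behind the identity, in particular that Tonelli applies. This holds because the integrand is non-negative and $\scal - 6k \ge 0$. If some terms are infinite, the inequality holds trivially or in the extended sense. The sign bookkeeping is the only place an error could creep in, and I resolve it using the monotonicity $m' \le 0$ from Theorem \ref{intro-th: sharp monotonic quantity}.
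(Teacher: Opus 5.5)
Your proposal is correct and is the same argument as the paper's: the corollary is read off from \eqref{align: mass and quasilocal mass difference via sharp monotonic quantity} by discarding the non-negative Euler characteristic term and the traceless Hessian term, then multiplying by $16\pi$. You are also right that the second line of that display has lost the minus sign from the first line. The correct form is $m(M,p) - m(r) = \frac{1}{8\pi}\int_0^r (u^{-2} - r^{-2})[\cdots]\,\dd u$, which is the sign the corollary actually uses.
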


This concludes the discussion for Theorem \ref{intro-th: sharp monotonic quantity}.
One can also prove similar applications as those above for Theorem \ref{intro-th: alternative sharp monotonic quantity}; we will only sketch these below.
As before, using the coarea formula, we may rewrite
\[
\int_{s \leq r} \frac{3k(2 + ks^2 - 6 r^{-2} s^2)}{s^2(4 - ks^2)^2} \|\nabla s\|^3 = \int_0^r \frac{3k(2 + kt^2 - 6r^{-2} t^2)}{t^2(4 - kt^2)^2} \left( \int_{s = t} \|\nabla s\|^2 \right) \dd t
\]
and
\[
\int_{s \leq r} \left( \frac{3k}{4} - \frac{3k(r^2 - s^2)(4 + 5k s^2)}{8r^2(4 - ks^2)} \right) \frac{\|\nabla s\|}{s^2} = \int_0^r \left( \frac{3k}{4} - \frac{3k(r^2 - t^2)(4 + 5k t^2)}{8r^2(4 - kt^2)} \right) \frac{\mathrm{area}(s = t)}{t^2} \dd t.
\]
Using the asymptotics of $s$ near $p$ (cf.~Lemma \ref{appendix-lemma: asymptotics of greens function near singularity}), one may check that both integral terms above converge to zero as $r \to 0$.
Thus, as in Corollary \ref{cllr: ADM mass via sharp monotonic quantity}, we obtain that
\allowdisplaybreaks
\begin{align*}
    m(M, p) = - \frac{1}{4 \pi} \lim_{r \downarrow 0 } \bigg( &\frac{1}{r^3} \int_{s = r} \|\nabla s\|^2 + \int_{s \leq r} \frac{3k(2 + ks^2 - 6 r^{-2} s^2)}{s^2(4 - ks^2)^2} \|\nabla s\|^3 \\
    &+ \int_{s \leq r} \left( \frac{3k}{4} - \frac{3k(r^2 - s^2)(4 + 5k s^2)}{8r^2(4 - ks^2)} \right) \frac{\|\nabla s\|}{s^2} - \frac{4 \pi}{r} \bigg).
\end{align*}
Hence, the right-hand side above may have the same enclosed mass interpretation as for \eqref{eq: quasi-local mass using sharp monotonic quantity}, and similar corollaries hold in this case as well.

\appendix

\section{Integration over the (super)level sets and the asymptotics of Green's function} \label{appendix-sec: integration over the (super)level sets of greens function}

In this section, we present various results involving the asymptotics of Green's function near its singularity, and prove that the quantities from Theorems \ref{intro-th: sharp monotonic quantity} and \ref{intro-th: alternative sharp monotonic quantity} are locally absolutely continuous; this allows us to rigorously deduce that they are indeed monotonic.

Throughout the rest of this section, let $(M, g)$ be a closed, connected, Riemannian three-manifold and let $k > 0$; fix a point $p \in M$ and let $G$ be Green's function for the operator $L := -\Delta + 3k/4$, with singularity at $p$.
By the strong maximum principle, $G$ is positive on $M \setminus \{p\}$; denote by $s := G^{-1}$ and by $m > 0$ the minimal value of $G$.

We start by recalling the asymptotics of $G$ near its singularity $p$; a proof for the simpler asymptotics (without the zeroth-order term, but including the Hessian estimate) is provided in {\cite[Theorem A.1]{ManeaSGE}}, but the arguments from {\cite[Lemma 6.4 and Theorem 6.5]{LeeParkerYamabeProblem}} are readily adapted to apply to $G$, as $M$ has dimension three.
These sharper asymptotics will be required for the proof of rigidity of both Theorems \ref{intro-th: sharp monotonic quantity} and \ref{intro-th: alternative sharp monotonic quantity}, and for their applications (cf.~Section \ref{sec: sharp monotonic quantity applications}).

\begin{lemma} \label{appendix-lemma: asymptotics of greens function near singularity}
Near $p$, we have
\[
G = \frac{1}{\dist(p, \cdot)} + A + O(\dist(p, \cdot)), \qquad \nabla G = - \frac{1}{\dist(p, \cdot)^2} \nabla \dist(p, \cdot) + O(1)
\]
for some $A \in \R$, and there is a constant $C > 0$ so that, in a punctured neighbourhood of $p$, we have
\[
\|\!\hess{G}\| \leq \frac{C}{\dist(p, \cdot)^3}.
\]

In terms of $s \equiv G^{-1}$, near $p$, we have 
\[
s = \dist(p, \cdot) - A \, \dist(p, \cdot)^2 + O(\dist(p, \cdot)^3), \qquad \nabla s = (1 - 2A \, \dist(p, \cdot))\nabla \dist(p, \cdot) + O(\dist(p, \cdot)^2),
\]
and there is a constant $\widetilde{C} > 0$ so that, in a punctured neighbourhood of $p$, we have
\[
\|\!\hess{s}\| \leq \frac{\widetilde{C}}{\dist(p, \cdot)}.
\]
\end{lemma}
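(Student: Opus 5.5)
To prove Lemma \ref{appendix-lemma: asymptotics of greens function near singularity}, I will build a parametrix in normal coordinates centred at $p$ and then use elliptic regularity for the remainder, following {\cite[Lemma 6.4 and Theorem 6.5]{LeeParkerYamabeProblem}}. Let $\rho := \dist(p, \cdot)$ and fix a cutoff $\eta$ equal to $1$ near $p$ and supported in a normal neighbourhood. In normal coordinates $\sqrt{\det g} = 1 + O(\rho^2)$. Hence $\Delta \rho^{-1} = -4\pi\delta_p + O(\rho^{-1})$: the radial Laplacian of $\rho^{-1}$ vanishes, and the first-order correction from $\partial_\rho \log\sqrt{\det g} = O(\rho)$ multiplies $\partial_\rho \rho^{-1} = -\rho^{-2}$.

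With this, $L(\eta \rho^{-1}) = 4\pi \delta_p + F$, where $F = O(\rho^{-1})$ near $p$ and $F$ is smooth away from $p$. Then $F \in L^q$ for every $q < 3$. Write $G = \eta\rho^{-1} + w$, where $w$ solves $Lw = -F$; this is uniquely solvable because $L = -\Delta + 3k/4$ is positive definite on a closed manifold. By $L^q$ elliptic estimates, $w \in W^{2,q}$ for all $q < 3$, so Sobolev embedding in dimension three gives $w \in C^{0,\alpha}$ for every $\alpha < 1$. Setting $A := w(p)$ yields $G = \rho^{-1} + A + O(\rho^{\alpha})$.

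To upgrade $O(\rho^\alpha)$ to $O(\rho)$ and to control $\nabla w$, I will refine the parametrix by one order, which is the step requiring care. The error $F$ has leading part $\rho^{-1}\psi(\theta)$ with $\psi$ a smooth function on the sphere, built from the Ricci curvature at $p$ through the expansion of $\sqrt{\det g}$, plus the $\frac{3k}{4}\rho^{-1}$ term. I will solve $\Delta_{\R^3}(\rho\,\phi(\theta)) = \rho^{-1}\psi(\theta)$ explicitly: decompose $\psi$ into spherical harmonics of degrees $0$ and $2$, then use $\Delta(\rho Y_l) = (2 - l(l+1))\rho^{-1}Y_l$. Both coefficients $2$ and $-4$ are nonzero, so no logarithms appear. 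Subtracting this correction leaves a remainder that is $O(1)$, hence in $L^\infty$, so the new $w$ lies in $W^{2,q}$ for all $q < \infty$ and therefore in $C^{1,\alpha}$. The correction term itself is $O(\rho)$ with gradient $O(1)$. Together these give $G = \rho^{-1} + A + O(\rho)$ and $\nabla G = -\rho^{-2}\nabla\rho + O(1)$, since $\nabla(\rho^{-1}) = -\rho^{-2}\nabla\rho$ exactly.

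For the Hessian bound, I will use a rescaling argument. On the annulus $\{\rho/2 < \dist(p,\cdot) < 2\rho\}$, rescale the metric by $\rho^{-1}$, so the annulus has unit size. Interior Schauder estimates for $LG = 0$ there give $|\hess G| \le C\rho^{-2}\sup_{\text{annulus}}|G| \le C\rho^{-3}$.

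The statements for $s = G^{-1}$ then follow by the chain rule. First, $s = \rho(1 + A\rho + O(\rho^2))^{-1} = \rho - A\rho^2 + O(\rho^3)$. Next, $\nabla s = -G^{-2}\nabla G = \rho^2(1 - 2A\rho + O(\rho^2))(\rho^{-2}\nabla\rho + O(1))$, which gives the stated expansion with error $O(\rho^2)$. Finally, $\hess s = -G^{-2}\hess G + 2G^{-3}\, dG\otimes dG$. The individual terms are $O(\rho^{-1})$, which already yields the stated bound $\widetilde C/\rho$; no cancellation is needed.
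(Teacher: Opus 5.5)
Your proposal is correct and follows essentially the route the paper indicates. The paper gives no self-contained proof: it defers to the Lee--Parker parametrix-plus-elliptic-regularity argument for the expansion, and to the author's earlier appendix for the Hessian bound. Your correction term $\rho\,\phi(\theta)$ is what carries out that adaptation in geodesic normal coordinates for the operator $-\Delta + 3k/4$, which is not conformally covariant; because $\psi$ contains only degree-$0$ and degree-$2$ spherical harmonics, no logarithms arise. Your rescaling argument for $\|\hess G\| \le C\,\dist(p,\cdot)^{-3}$ and the chain-rule computations for $s$ are standard and correct.
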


We may thus extend $s$ to a continuous function on all of $M$ by setting $s(p) := 0$; we also remark that, by construction, $s$ is smooth on $M \setminus \{p\}$.
The next lemma discusses the density of the regular values of $s$.

\begin{lemma} \label{appendix-lemma: regular values of G}
The regular values of $s$ form an open and full measure (in particular, dense) subset of $(0, 1/m)$.
\end{lemma}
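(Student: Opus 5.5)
The set of critical values is closed and Lebesgue-null, so its complement in $(0, 1/m)$ is open and of full measure; I will prove these two facts separately.

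Throughout, note that $s = G^{-1}$ is smooth on $M \setminus \{p\}$ and $\nabla s = -G^{-2}\nabla G$. Hence the critical points of $s$ in $M \setminus \{p\}$ are exactly those of $G$, and $t$ is a regular value of $s$ exactly when $1/t$ is a regular value of $G$. Since $\min G = m$ and $G \to \infty$ at $p$, the range of $s$ on $M\setminus\{p\}$ is $(0, 1/m]$.

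\textbf{Openness.} Let $\mathcal{C} := \{x \in M \setminus \{p\} : \nabla s(x) = 0\}$ be the critical set. By Lemma \ref{appendix-lemma: asymptotics of greens function near singularity}, $\|\nabla s\| = 1 + O(\dist(p,\cdot))$ near $p$. So there is $\delta > 0$ with $\nabla s \neq 0$ on $B(p,\delta) \setminus \{p\}$. Consequently $\mathcal{C}$ is a closed subset of the compact set $M \setminus B(p,\delta)$, and is therefore compact. The set of critical values $s(\mathcal{C})$ is then compact, since $s$ is continuous. Its complement in $(0,1/m)$, which is the set of regular values, is therefore open.

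\textbf{Full measure.} I apply Sard's theorem to the smooth map $s : M \setminus \{p\} \to \R$. Since the manifold has dimension $3$ and the target has dimension $1$, Sard's theorem in the smooth ($C^\infty$, indeed $C^3$) category gives that $s(\mathcal{C})$ has Lebesgue measure zero. Equivalently, one may cover $M \setminus \{p\}$ by countably many charts and apply Sard in each chart.

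Combining the two parts, the regular values form an open set of full measure in $(0,1/m)$. Density follows immediately, because a set of full measure meets every nonempty open interval.

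The only genuinely delicate point is excluding critical points accumulating at the singularity $p$, which would spoil the closedness of the critical value set near $0$. This is exactly what the gradient asymptotics $\nabla s = \nabla\dist(p,\cdot) + O(\dist(p,\cdot))$ from Lemma \ref{appendix-lemma: asymptotics of greens function near singularity} rule out. Note also that the value $1/m$ itself is critical (it is attained at a minimum of $G$), but it lies outside the open interval $(0,1/m)$, so it causes no issue.
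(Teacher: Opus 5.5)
Your proposal is correct and follows the paper's argument: compactness shows that the set of regular values is open, and Sard's theorem shows it has full measure. The gradient asymptotics near $p$ are harmless but not actually needed. By continuity of $s$ on $M$ with $s(p)=0$, critical points with values near some $t_0 \in (0,1/m)$ stay a fixed distance from $p$, so accumulation at $p$ could only affect critical values near $0 \notin (0,1/m)$.
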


\begin{proof}
Since all (super/sub)level sets of $s$ are compact, it is clear that the set of regular values of $s$ is open in $(0, 1/m)$.
Sard's theorem implies that it is also a full measure subset of $(0, 1/m)$.
\end{proof}

We will also require the following property of the set of critical points of $s$, which is shown in {\cite[Lemma B.2]{ManeaSGE}}.

\begin{lemma} \label{appendix-lemma: nodal sets have finite area}
The set of critical points of $s$ is contained in a finite union of embedded, relatively compact surfaces in $M$.
\end{lemma}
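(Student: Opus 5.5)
The statement to prove is Lemma~\ref{appendix-lemma: nodal sets have finite area}: the critical set $\{\nabla s = 0\}$ lies in a finite union of embedded, relatively compact surfaces. Since $\nabla s = -G^{-2}\nabla G$, this set coincides with the critical set of $G$ on $M \setminus \{p\}$. The plan is therefore to work with $G$, which solves the linear elliptic equation $\Delta G = \tfrac{3k}{4}G$ with smooth coefficients away from $p$, and to stratify its critical points by the rank of the Hessian.

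\textbf{Step 1: keep the critical set away from $p$.} By Lemma~\ref{appendix-lemma: asymptotics of greens function near singularity}, $\|\nabla s\| \to 1$ as $\dist(p,\cdot)\to 0$. Hence the critical set $K$ is a compact subset of $M\setminus\{p\}$, and it suffices to cover $K$ locally by finitely many surfaces and then use compactness.

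\textbf{Step 2: points where $\hess{G} \neq 0$.} Fix $x\in K$ with $\hess{G}(x)\neq 0$. Some partial derivative $\partial_i G$ then has nonzero gradient at $x$. By the implicit function theorem, $\{\partial_i G = 0\}$ is an embedded surface near $x$, and it contains $K$ near $x$.

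\textbf{Step 3: points where $\hess{G}(x) = 0$ (the main obstacle).} Here $G$ and all its derivatives cannot vanish simultaneously at $x$. Indeed, $G - G(x)$ solves $\Delta(G-G(x)) - \tfrac{3k}{4}(G - G(x)) = \tfrac{3k}{4}G(x)\neq 0$, and more to the point $G$ solves a linear elliptic equation with smooth coefficients. Aronszajn's strong unique continuation would then force $G\equiv 0$ on the connected set $M\setminus\{p\}$, which is impossible. So some derivative of $G$ of order $\geq 3$ is nonzero at $x$.

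Choose a multi-index $\beta$ of maximal order such that the derivative $\partial^\beta G$ vanishes at $x$ but $\nabla\partial^\beta G(x)\neq0$. Then $\{\partial^\beta G = 0\}$ is an embedded surface through $x$. What must be checked is that it contains all nearby critical points. Following the standard Han--Lin/Hardt--Simon argument, I would proceed as follows:
\begin{itemize}
\item Take the lowest order $d$ with $\nabla^d G(x)\neq 0$.
\item Pick an order-$(d-1)$ derivative $w = \partial^\gamma G$ with $\nabla w(x)\neq 0$.
\item Observe that every critical point near $x$ with vanishing derivatives up to order $d-1$ lies on $\{w=0\}$.
\end{itemize}
Critical points near $x$ whose Hessian is nonzero are already handled by Step 2. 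The remaining points are those with vanishing derivatives through some intermediate order. I would handle these by induction on that order, stratifying $K$ as
\[
K_j=\{\nabla G=\dots=\nabla^{j}G=0\}.
\]
Each stratum $K_j\setminus K_{j+1}$ lies locally in the zero set of a single derivative with nonvanishing gradient. Unique continuation gives $\bigcap_j K_j=\emptyset$, and this, together with compactness, ensures the stratification terminates after finitely many orders near each point.

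\textbf{Step 4: conclude by compactness.} Cover the compact set $K$ by finitely many such neighbourhoods. Shrinking each surface to a relatively compact piece gives the finite union.

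I expect the delicate point to be the uniformity in Step 3: guaranteeing that a single neighbourhood of $x$ works for all strata at once. I would handle this by choosing, at each point, neighbourhoods adapted to that point's own vanishing order, and letting compactness of each closed stratum do the rest.
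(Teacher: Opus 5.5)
Your Steps 1, 2 and 4 are correct. Step 3, however, misses a one-line observation, and as written it does not hold up. The case $\hess G(x) = 0$ never occurs. On $M \setminus \{p\}$ we have $\tr_g \hess G = \Delta G = \tfrac{3k}{4} G > 0$, because $G > 0$ by the strong maximum principle, so $\hess G$ is nonzero at every point.

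You actually wrote down the contradiction yourself. Evaluating $\Delta(G - G(x)) - \tfrac{3k}{4}(G - G(x)) = \tfrac{3k}{4}G(x)$ at $x$ gives $\Delta G(x) = \tfrac{3k}{4}G(x) \neq 0$, which is incompatible with $\hess G(x) = 0$. This is exactly the fact the paper relies on. The lemma is quoted from \cite[Lemma B.2]{ManeaSGE}, and the same observation is recorded in Appendix \ref{appendix-sec: morse theory for greens function and integrability of third integral term}, where $\hess G$ is noted to have rank at least one everywhere on $M\setminus\{p\}$.

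With this, your Step 2 applies at every critical point. At a critical point the coordinate second derivatives of $G$ coincide with $\hess G$, so some $\partial_i G$ has nonvanishing differential, and $\{\partial_i G = 0\}$ is a local embedded surface containing all nearby critical points. Steps 1 and 4 then finish the proof.

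As a standalone argument, Step 3 fails for three reasons.
\begin{enumerate}
\item Aronszajn's theorem applied to $G$ gives nothing, because $G(x) > 0$. Since $G$ does not vanish to any order at $x$, nothing forces a nonzero derivative of order $\geq 1$, let alone $\geq 3$, and $\bigcap_j K_j = \emptyset$ does not follow. You would need unique continuation for $\nabla G$, which solves a different equation.
\item Step 3 uses only that $G$ is positive and solves some linear elliptic equation with smooth coefficients, and in that generality the lemma is false. If $\phi$ is smooth and positive on $\R$, constant on $(-\infty, 0]$ but not on $(0,\infty)$, then $u(x) = \phi(x^1)$ solves $\Delta u = cu$ on $\R^3$ with the smooth potential $c = \phi''(x^1)/\phi(x^1)$. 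Yet its critical set contains the half-space $\{x^1 \leq 0\}$.
\item The uniformity issue you flag is genuine. The Step 2 neighbourhoods around points of $K_1 \setminus K_2$ shrink as those points approach $K_2$. Since $K_1 \setminus K_2$ is not closed, compactness of the closed strata does not produce a finite cover.
\end{enumerate}
What makes the lemma true here is the sign information: $G > 0$ together with a positive constant potential, which empties $K_2$ entirely. Step 3 should simply be replaced by the trace identity above.
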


We now continue with the well-definedness and local absolute continuity of the monotonic quantities from Theorems \ref{intro-th: sharp monotonic quantity} and \ref{intro-th: alternative sharp monotonic quantity}.
The following result describes the local absolute continuity of the integral of $\|\nabla s\|^2$ along the level sets of $s$, which is present in both theorems.

\begin{lemma} \label{appendix-lemma: local AC of energy term}
The function 
\[
r \mapsto \int_{s = r} \|\nabla s\|^2
\]
is absolutely continuous on $(0, r_0]$ for all $r_0 < 1/m$, and if $r_0 \in (0, 1/m)$ is a regular value of $s$, then
\begin{equation} \label{appendix-eq: derivative of energy term}
    \left. \frac{\dd}{\dd r} \right|_{r = r_0} \int_{s = r} \|\nabla s\|^2 = \int_{s = r_0} \frac{\div(\|\nabla s\| \nabla s)}{\|\nabla s\|} = \int_{s = r_0} \left( \Delta s + \left\langle \nabla \|\nabla s\|, \frac{\nabla s}{\|\nabla s\|} \right\rangle \right).
\end{equation}
\end{lemma}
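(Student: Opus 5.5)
The idea is to write the energy as the integral of a divergence over a sublevel set and then apply the coarea formula. Let $X := \|\nabla s\|\nabla s$, a smooth vector field on $M\setminus\{p\}$. On any regular level set $\{s = r\}$ the outward unit normal of $\{s \le r\}$ is $\nu = \nabla s/\|\nabla s\|$, so $\langle X,\nu\rangle = \|\nabla s\|^2$. For regular values $0<u<r$, the Divergence Theorem then gives
\[
\int_{s=r}\|\nabla s\|^2-\int_{s=u}\|\nabla s\|^2=\int_{u\le s\le r}\div X .
\]
By Lemma \ref{appendix-lemma: asymptotics of greens function near singularity}, $\|\nabla s\|\to 1$ and $\|\hess s\| = O(\dist(p,\cdot)^{-1})$ near $p$. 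Hence $\int_{s=u}\|\nabla s\|^2 \sim 4\pi u^2 \to 0$, and $\div X = \|\nabla s\|\Delta s + \hess s(\nabla s,\nabla s)/\|\nabla s\| = O(\dist(p,\cdot)^{-1})$, which is integrable in dimension three. Letting $u\downarrow 0$ along regular values, dominated convergence yields
\[
F(r):=\int_{s=r}\|\nabla s\|^2=\int_{s\le r}\div X
\]
for every regular value $r$.

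Next we rewrite this via the coarea formula. Since $|\div X|$ is bounded on $\{s\le r_0\}$ away from $p$ and integrable near $p$, we get
\[
\int_{s\le r}\div X=\int_0^r\Big(\int_{s=t}\frac{\div X}{\|\nabla s\|}\Big)\dd t .
\]
Critical points contribute nothing here, because the coarea measure of the set where $\nabla s = 0$ is zero. The inner integral is an $L^1(0,r_0)$ function of $t$. Therefore $\widetilde F(r):=\int_{s\le r}\div X$ is absolutely continuous on $(0,r_0]$, and at every regular value $r_0$ its derivative equals the integrand at $t = r_0$, which gives \eqref{appendix-eq: derivative of energy term}. Indeed, near a regular level set the integrand depends continuously on $t$, since nearby level sets are smoothly parametrized by the flow of $\nabla s/\|\nabla s\|^2$. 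The second expression in \eqref{appendix-eq: derivative of energy term} follows from the product rule together with $d\|\nabla s\|=\hess s(\nu,\cdot)$.

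The main obstacle is extending the identity $F=\widetilde F$ from regular values to all $r\in(0,r_0]$, since at critical values the level set may carry positive area on which $\nabla s=0$. The plan is to show that $F$ itself is continuous and agrees with $\widetilde F$. Write $F(r)=\int_{s=r}\|\nabla s\|^2 \,\dd\mathcal H^2$. The critical set contributes zero to this integrand, and by Lemma \ref{appendix-lemma: nodal sets have finite area} it lies in finitely many surfaces. On the set $\{\nabla s\neq 0\}\cap\{s=r\}$ we approximate $r$ by regular values $r_n\to r$. Using the divergence identity on the slab $\{r\le s\le r_n\}$ with boundary portions where $X$ vanishes, I expect the boundary term at a critical value to still equal $\int_{s=r}\|\nabla s\|^2$, because $X$ is continuous, vanishes on the critical set, and the Divergence Theorem holds for sets of finite perimeter (sublevel sets of $s$ have finite perimeter by coarea). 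That yields $F(r)=\widetilde F(r)$ for all $r$, which gives absolute continuity on $(0,r_0]$. If this finite-perimeter argument proves delicate, a fallback is to prove the statement for the version of $F$ equal a.e. to $\widetilde F$, as is done for the area term in Lemma \ref{appendix-lemma: local AC of area of the level sets of s}.
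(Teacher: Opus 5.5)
\textbf{The overall route matches the paper's.} You represent the energy as the integral of $\div X$, with $X:=\|\nabla s\|\nabla s$, over a sub- or superlevel set. You then get absolute continuity from the coarea formula, and read off the derivative at a regular value from the continuity of $t\mapsto\int_{s=t}\div X/\|\nabla s\|$ nearby. The paper integrates over $\{s>r\}$, which avoids $p$ and makes your analysis at the pole unnecessary, but that difference is cosmetic. These steps are fine apart from one imprecision in the coarea step. What removes the critical set from the left-hand side is that $\div X=0$ there, because $\|X\|=\|\nabla s\|^2$ vanishes quadratically and so $DX=0$ at critical points; alternatively, the critical set is Lebesgue-null. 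The coarea identity by itself only sees $\{\nabla s\neq 0\}$.

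\textbf{The gap is the step you flag as the main obstacle.} It is exactly the part of the lemma that goes beyond an almost-everywhere statement: that $F(r)=\int_{s=r}\|\nabla s\|^2$ equals $\widetilde F(r)$ at \emph{every} $r$, including critical values.

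Your justification fails as written. The BV coarea formula gives $P(\{s\le t\})<\infty$ only for almost every $t$, and by Sard the critical values form a null set, so it says nothing at precisely the values you need. To apply Gauss--Green at a critical $r$ you would first have to prove $P(\{s\le r\})<\infty$, for instance via a bound on $\mathcal H^2(\{s=r\})$, and the proposal does not supply this. Nor does it prove the continuity of $F$ that your plan relies on.

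Your fallback only produces an absolutely continuous function agreeing with $F$ at regular values, as in Lemma~\ref{appendix-lemma: local AC of area of the level sets of s}. That is enough for the later integrations by parts, but it is strictly weaker than the statement. The paper obtains $F(r)=-\int_{s>r}\div X$ for all $r$ by importing the argument of \cite[Proposition B.3]{ManeaSGE}, using that $X$ is $C^1$ on $M\setminus\{p\}$.

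\textbf{A self-contained fix in your framework, avoiding perimeters.} Let $K_r:=\{s=r,\ \nabla s=0\}$. Take a Lipschitz cutoff $\chi_\delta$ equal to $1$ on the $\delta$-neighbourhood of $K_r$, supported in the $2\delta$-neighbourhood $N_{2\delta}$, with $\|\nabla\chi_\delta\|\le 2/\delta$.
\begin{itemize}
\item Since $\|\nabla s\|\le C\delta$ on $N_{2\delta}$, you get $|\div(\chi_\delta X)|\le C\delta$.
\item The divergence theorem on $\{s<t\}$ then gives $\int_{s=t}\chi_\delta\|\nabla s\|^2\le C\delta\,\mathrm{vol}(N_{2\delta})$ for every regular $t$.
\item On the support of $1-\chi_\delta$ near level $r$ the gradient is bounded below. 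Hence $\int_{s=t}(1-\chi_\delta)\|\nabla s\|^2$ is continuous at $t=r$, since nearby level sets are moved there by the flow of $\nabla s/\|\nabla s\|^2$.
\item Letting $\delta\downarrow 0$, using Fatou or monotone convergence at $t=r$, yields $F(r)=\lim_{t\to r}F(t)=\widetilde F(r)$, with $t$ ranging over regular values.
\end{itemize}
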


\begin{proof}
Denote by $E(r) := \int_{s = r} \|\nabla s\|^2$.
To prove the desired identity, it suffices to show that for all $r \in (0, 1/m)$, we have
\begin{equation} \label{appendix-eq: rewriting of the energy term as a divergence}
    E(r) = - \int_{s > r} \div(\|\nabla s\|\nabla s).
\end{equation}
Indeed, if \eqref{appendix-eq: rewriting of the energy term as a divergence} holds, then the desired absolute continuity of the left-hand side follows immediately, as the right-hand side satisfies this property by Lemma \ref{appendix-lemma: nodal sets have finite area}.
Moreover, to compute the derivative of $E$ at a regular value $r_0 \in (0, 1/m)$, we proceed as follows.
By Lemma \ref{appendix-lemma: regular values of G}, if $|h|$ is small enough, $r_0 + h$ is also a regular value of $s$, so \eqref{appendix-eq: rewriting of the energy term as a divergence} and the coarea formula tell us that
\[
E(r_0 + h) - E(r_0) = \int_{r_0}^{r_0 + h} \left( \int_{s = r} \frac{\div(\|\nabla s\|\nabla s)}{\|\nabla s\|} \right) \dd r.
\]
The fundamental theorem of calculus (since the integrand on the right-hand side above is continuous in $r$) implies that \eqref{appendix-eq: derivative of energy term} holds at $r_0$.

To prove \eqref{appendix-eq: rewriting of the energy term as a divergence}, one may apply the argument from {\cite[Proof of Proposition B.3]{ManeaSGE}} verbatim (with $f = \|\nabla s\|$ in the notation from {\cite{ManeaSGE}}), as in this case, $\|\nabla s\| \nabla s$ is a $C^1$ vector field on $M \setminus \{p\}$ (and the Divergence Theorem may be applied).
\end{proof}

Lemma \ref{appendix-lemma: local AC of energy term} covers the local absolute continuity of the first term in both Theorems \ref{intro-th: sharp monotonic quantity} and \ref{intro-th: alternative sharp monotonic quantity}.
This is also the only term which is integrated along the level sets of $s$; the others involve integrals over its sublevel sets.
Their local absolute continuity is more straightforward, but for completeness, we include short proofs of this as well.
The arguments will depend on the following elementary lemma.

\begin{lemma} \label{appendix-lemma: general local AC result}
Let $r_0 > 0$ and $f : [r_0, \infty) \to \R$ be a measurable function for which
\[
\int_{r_0}^\infty (t^2 - r_0^2) |f(t)| \dd t < \infty.
\]
Then, for every $r > r_0$, $f$ is integrable on $[r, \infty)$, and the function 
\[
F : (r_0, \infty) \to \R, \quad F(r) := (r^2 - r_0^2) \int_r^\infty f(t) \dd t
\]
is locally absolutely continuous, with
\[
F'(r) = 2r \int_r^\infty f(t) \dd t - (r^2 - r_0^2) f(r)
\]
for almost every $r > r_0$.
\end{lemma}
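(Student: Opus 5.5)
For every $t \geq r > r_0$ we have $t^2 - r_0^2 \geq r^2 - r_0^2 > 0$, hence
\[
|f(t)| \leq \frac{t^2 - r_0^2}{r^2 - r_0^2} |f(t)|.
\]
Integrating this over $[r, \infty)$ and using the hypothesis shows that $f \in L^1([r, \infty))$ for every $r > r_0$. It follows that $\Phi(r) := \int_r^\infty f(t) \dd t$ is well-defined on $(r_0, \infty)$.

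Next, fix a compact interval $[a, b] \subset (r_0, \infty)$. For $r \in [a, b]$ we can write
\[
\Phi(r) = \int_a^\infty f(t) \dd t - \int_a^r f(t) \dd t.
\]
The first term is a constant and $f \in L^1([a, b])$, so $\Phi$ is the indefinite integral of an $L^1$ function on $[a, b]$. Hence $\Phi$ is absolutely continuous on $[a, b]$. By the Lebesgue differentiation theorem, $\Phi'(r) = -f(r)$ for almost every $r \in [a, b]$. Exhausting $(r_0, \infty)$ by countably many such intervals, $\Phi$ is locally absolutely continuous on $(r_0, \infty)$ with $\Phi' = -f$ almost everywhere.

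Finally, $F(r) = (r^2 - r_0^2) \Phi(r)$ is the product of a smooth function, which is Lipschitz on each $[a, b]$, and an absolutely continuous function. A product of two absolutely continuous functions on a compact interval is absolutely continuous, since both factors are bounded there. So $F$ is locally absolutely continuous. The product rule holds at every point where both factors are differentiable, which is almost every $r$, and there it gives
\[
F'(r) = 2r \Phi(r) - (r^2 - r_0^2) f(r),
\]
which is the claimed formula.

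No step is a real obstacle. The only point needing care is the first one: the hypothesis controls $|f|$ only with the weight $t^2 - r_0^2$, which vanishes at $r_0$. This is why integrability of $f$ is obtained only on $[r, \infty)$ for $r > r_0$, and why the conclusion is stated on the open interval $(r_0, \infty)$ rather than up to $r_0$.
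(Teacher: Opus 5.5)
Your proof is correct and follows the paper's argument. You use the same weighted estimate $\int_r^\infty |f| \leq (r^2 - r_0^2)^{-1} \int_r^\infty (t^2 - r_0^2)|f|$ for integrability, then get local absolute continuity and the derivative formula from the product rule; you simply fill in the details the paper leaves implicit. (The paper also remarks, via dominated convergence, that $F(r) \to 0$ as $r \downarrow r_0$, which the statement does not require.)
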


\begin{proof}
To see that $f$ is integrable on $[r, \infty)$ for $r > r_0$, we just use the estimate
\[
\int_{r}^\infty |f(t)| \dd t \leq \frac{1}{r^2 - r_0^2} \int_r^\infty (t^2 - r_0^2) |f(t)| \dd t < \infty
\]
The above, together with the Dominated Convergence Theorem, also implies that $F(r) \to 0$ as $r \downarrow r_0$.
The local absolute continuity of $F$ is also immediately implied by this, and the computation for the derivative of $F$ follows from the product rule.
\end{proof}

The above lemma allows us to conclude that the quantity from Theorem \ref{intro-th: sharp monotonic quantity} is locally absolutely continuous.
Indeed, Lemma \ref{appendix-lemma: local AC of energy term} shows that the first integral term from Theorem \ref{intro-th: sharp monotonic quantity} is locally absolutely continuous.
The second one has the same property, as, by Lemma \ref{appendix-lemma: asymptotics of greens function near singularity}, $\|\nabla s\|$ is integrable near $p$ (and thus on $M$), so the coarea formula tells us that for every $r < 1/m$,
\[
\int_{s \leq r} \|\nabla s\| = \int_0^r \mathrm{area}(s = t) \dd t,
\]
where $\mathrm{area}(s = t)$ denotes the two-dimensional Hausdorff measure of the level set $\{s = t\}$.
Thus, the left-hand side of the preceding equation is locally absolutely continuous in $r \in (0, 1/m)$, with
\begin{equation} \label{appendix-eq: derivative of second term in sharp monotonic quantity}
    \frac{\dd}{\dd r} \int_{s \leq r} \|\nabla s\| = \mathrm{area}(s = r)
\end{equation}
for almost every $r \leq 1/m$.
The local absolute continuity of the third term is recorded in the following lemma. 

\begin{lemma} \label{appendix-lemma: local AC in sharp monotonic quantity}
If there is some $r_0 \leq 1/m$ so that
\[
\int_{s \leq r_0} \frac{r_0^2 - s^2}{\|\nabla s\|} < \infty,
\]
then, for every $r \in (0, r_0)$, $\|\nabla s\|^{-1} (r^2 - s^2)$ is integrable on $\{s \leq r\}$ and locally absolutely continuous as a function of $r \in (0, r_0)$, with
\begin{equation} \label{appendix-eq: derivative of third term in sharp monotonic quantity}
    \frac{\dd}{\dd r} \int_{s \leq r} \frac{r^2 - s^2}{\|\nabla s\|} = 2r \int_0^r \left( \int_{s = t} \frac{1}{\|\nabla s\|^2} \right) \dd t
\end{equation}
for almost every $r \in (0, r_0)$.
\end{lemma}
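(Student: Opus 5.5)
The plan is to rewrite everything with the coarea formula and reduce to a one-variable statement. Put $B(t) := \int_{s = t} \|\nabla s\|^{-2}$. Since $\|\nabla s\| > 0$ off the critical set, and the critical set has measure zero by Lemma \ref{appendix-lemma: nodal sets have finite area}, the coarea formula applies to the nonnegative integrand $(r^2 - s^2)\|\nabla s\|^{-1}$. For every $r \le r_0$ it gives
\[
\int_{s \leq r} \frac{r^2 - s^2}{\|\nabla s\|} = \int_0^r (r^2 - t^2) B(t) \dd t ,
\]
and the hypothesis says exactly that $\int_0^{r_0} (r_0^2 - t^2) B(t) \dd t < \infty$.

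Next comes integrability for $r < r_0$. For $t \le r$ we have $r^2 - t^2 \le r_0^2 - t^2$, so $\int_0^r (r^2 - t^2) B(t) \dd t$ is finite. Moreover, $r_0^2 - t^2 \geq r_0^2 - r^2 > 0$ on $[0, r]$, which yields
\[
\int_0^r B(t) \dd t \le \frac{1}{r_0^2 - r^2}\int_0^{r_0} (r_0^2 - t^2) B(t) \dd t < \infty .
\]
Hence $B \in L^1(0, r)$ for every $r < r_0$. Consequently $t^2 B(t)$ is also in $L^1(0,r)$. This is the same mechanism as in Lemma \ref{appendix-lemma: general local AC result}, mirrored to sublevel sets.

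For absolute continuity and the derivative, split the integral as
\[
\int_0^r (r^2 - t^2) B(t) \dd t = r^2 \int_0^r B(t) \dd t - \int_0^r t^2 B(t) \dd t .
\]
Both $\Phi(r) := \int_0^r B$ and $\Psi(r) := \int_0^r t^2 B$ are indefinite integrals of $L^1_{\mathrm{loc}}([0, r_0))$ functions. They are therefore locally absolutely continuous on $(0, r_0)$, with $\Phi' = B$ and $\Psi' = r^2 B$ almost everywhere by Lebesgue differentiation. The product $r^2 \Phi(r)$ of locally absolutely continuous functions is locally absolutely continuous. Differentiating gives $2r\Phi(r) + r^2 B(r) - r^2 B(r) = 2r \int_0^r B(t) \dd t$ almost everywhere, which is \eqref{appendix-eq: derivative of third term in sharp monotonic quantity}.

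The only step needing care is the justification of the coarea identity. I would apply the coarea formula to $\mathbf 1_{\{s\le r\}}(r^2-s^2)\|\nabla s\|^{-1}$ on $M\setminus(\{p\}\cup\mathrm{Crit}(s))$, using Tonelli since the integrand is nonnegative. I would note that $s$ is only Lipschitz near $p$ (Lemma \ref{appendix-lemma: asymptotics of greens function near singularity}), but $\{p\}$ is null. Finally, level sets at critical values are handled because $\mathrm{Crit}(s)$ has zero volume, so the formula holds with $B(t)$ possibly infinite on a null set of $t$.
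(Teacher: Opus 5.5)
Your argument is correct and is essentially the paper's proof: the coarea formula, then the bound $r_0^2 - t^2 \geq r_0^2 - r^2$ to get local integrability of the level-set integral $B$, then a splitting of the weight plus the product rule. This is exactly the mechanism of Lemma \ref{appendix-lemma: general local AC result}. The only difference is cosmetic. The paper first passes to $G = s^{-1}$ and superlevel sets $\{G \geq u\}$, splitting $t^2 - u^2 = (t^2 - u_0^2) - (u^2 - u_0^2)$ with $u_0 = 1/r_0$ so that it can quote that lemma directly. You instead run the same estimate in the $s$-variable, which gives the stated derivative formula without any change of variables.
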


\begin{proof}
Put $u_0 := 1/r_0$.
Since $s \equiv G^{-1}$, by the chain rule, we equivalently need to show that, for $u > u_0$, the function $\|\nabla G\|^{-1}(G^2 - u^2)$ is integrable on $\{G \geq u\}$, and locally absolutely continuous as a function of $u \in (u_0, \infty)$, with
\[
\frac{\dd}{\dd u} \int_{G \geq u} \frac{G^2 - u^2}{\|\nabla G\|} = - 2u \int_u^\infty \left( \int_{G = t} \frac{1}{\|\nabla G\|^2} \right) \dd t.
\]

By our assumption and the coarea formula, for $u \geq u_0$, we may write
\allowdisplaybreaks
\begin{align*}
    \int_{G \geq u} \frac{G^2 - u^2}{\|\nabla G\|} &= \int_u^\infty (t^2 - u^2) \left( \int_{G = t} \frac{1}{\|\nabla G\|^2} \right) \dd t \\
    &=  \int_u^\infty (t^2 - u_0^2) \left( \int_{G = t} \frac{1}{\|\nabla G\|^2} \right) \dd t - (u^2 - u_0^2) \int_u^\infty \left( \int_{G = t} \frac{1}{\|\nabla G\|^2} \right) \dd t.
\end{align*}
Both terms on the right-hand side above are finite and locally absolutely continuous as functions of $u$ by Lemma \ref{appendix-lemma: general local AC result}, and the same lemma also computes the derivative of the left-hand side.
\end{proof}

The last technical result needed for the proof of Theorem \ref{intro-th: sharp monotonic quantity} is a lemma that ensures we can integrate by parts terms involving the derivative of the area of the level sets of $s$ even though this function may not be, in general, locally absolutely continuous.
This takes the following form.

\begin{lemma} \label{appendix-lemma: local AC of area of the level sets of s}
Suppose that there are some $r_1 < r_2$ so that $\|\nabla s\|^{-1}$ is integrable on the subset $\{r_1 \leq s \leq r_2\}$.

Then, there is a locally absolutely continuous function $\widetilde{A} : (r_1, r_2) \to \R$ so that $\widetilde{A}(r) = \mathrm{area}(s = r) =: A(r)$ for every regular value $r \in (r_1, r_2)$, and for which
\begin{equation} \label{appendix-eq: derivative of area of level sets of s}
    \widetilde{A}'(r) = \int_{s = r} \frac{1}{\|\nabla s\|} \div \left( \frac{\nabla s}{\|\nabla s\|} \right) = \int_{s = r} \left( \frac{\Delta s}{\|\nabla s\|^2} - \left\langle \nabla \|\nabla s\|, \frac{\nabla s}{\|\nabla s\|^3} \right\rangle \right).
\end{equation}
for almost every $r \in (r_1, r_2)$; here, $\mathrm{area}(s = r)$ denotes the two-dimensional Hausdorff measure of the level set $\{s = r\}$.
\end{lemma}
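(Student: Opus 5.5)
We define $\widetilde{A}$ through a divergence identity, mimicking the proof of Lemma \ref{appendix-lemma: local AC of energy term}. Fix a regular value $r_* \in (r_1, r_2)$ of $s$ and set, for $r \in (r_1, r_2)$,
\[
\widetilde{A}(r) := A(r_*) + \int_{r_* \leq s \leq r} \div\left( \frac{\nabla s}{\|\nabla s\|} \right) \frac{1}{1} \quad (r \geq r_*),
\]
with the analogous formula (with a minus sign and the region $\{r \leq s \leq r_*\}$) for $r < r_*$. This only makes sense if $\div(\nabla s/\|\nabla s\|)$ is integrable on $\{r_1 \leq s \leq r_2\}$.

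To obtain that integrability, we write
\[
\div\left( \frac{\nabla s}{\|\nabla s\|}\right) = \frac{\Delta s}{\|\nabla s\|} - \frac{\hess{s}(\nabla s,\nabla s)}{\|\nabla s\|^3}.
\]
Here $\Delta s = 2\|\nabla s\|^2/s - 3ks/4$ (from \eqref{eq: Delta s^2}), and $|\hess{s}| \leq C$ on the compact set $\{r_1 \leq s \leq r_2\}$, which avoids $p$. Hence the divergence is bounded by $C'(1 + \|\nabla s\|^{-1})$, which is integrable by hypothesis.

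Local absolute continuity then follows at once from the coarea formula. Indeed,
\[
\widetilde{A}(r) = A(r_*) + \int_{r_*}^r \left( \int_{s=t} \frac{1}{\|\nabla s\|} \div\left(\frac{\nabla s}{\|\nabla s\|}\right) \right) dt,
\]
and the inner integral is an $L^1$ function of $t$. By Lebesgue differentiation, $\widetilde{A}'(r)$ equals the inner integral for almost every $r$. Expanding the divergence as above gives \eqref{appendix-eq: derivative of area of level sets of s}, using $d\|\nabla s\| = \hess{s}(\nabla s/\|\nabla s\|,\cdot)$.

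The main step is showing $\widetilde{A}(r) = A(r)$ at every regular value $r$. Let $r > r_*$ be a regular value. If $[r_*, r]$ consists only of regular values, $\{r_* \leq s \leq r\}$ is a smooth compact manifold with boundary. The divergence theorem applied to the unit normal field $\nabla s/\|\nabla s\|$ then gives $\int_{r_*\le s\le r}\div = A(r) - A(r_*)$, as desired.

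In general there are critical values in between, and this is the obstacle. I would handle it as in {\cite[Proof of Proposition B.3]{ManeaSGE}}, cited for Lemma \ref{appendix-lemma: local AC of energy term}. Replace the field $\nabla s/\|\nabla s\|$ by the smooth fields $X_\varepsilon := \nabla s/\sqrt{\|\nabla s\|^2+\varepsilon^2}$. The divergence theorem applies to $X_\varepsilon$ on $\{r_* \leq s \leq r\}$, whose boundary components are regular level sets, and gives
\[
\int_{r_*\le s\le r}\div X_\varepsilon = \int_{s=r} \frac{\|\nabla s\|}{\sqrt{\|\nabla s\|^2+\varepsilon^2}} - \int_{s=r_*} \frac{\|\nabla s\|}{\sqrt{\|\nabla s\|^2+\varepsilon^2}}.
\]
As $\varepsilon \to 0$, the right side tends to $A(r) - A(r_*)$.

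For the left side, compute
\[
\div X_\varepsilon = \frac{\Delta s}{(\|\nabla s\|^2+\varepsilon^2)^{1/2}} - \frac{\hess{s}(\nabla s,\nabla s)}{(\|\nabla s\|^2+\varepsilon^2)^{3/2}}.
\]
This is dominated by $C(1 + \|\nabla s\|^{-1})$ uniformly in $\varepsilon$, since $\|\nabla s\|^2/(\|\nabla s\|^2+\varepsilon^2)^{3/2} \leq \|\nabla s\|^{-1}$. Off the critical set, which is null by Lemma \ref{appendix-lemma: nodal sets have finite area}, $\div X_\varepsilon$ converges pointwise to $\div(\nabla s/\|\nabla s\|)$. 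Dominated convergence then gives $\widetilde{A}(r) = A(r)$. The case $r < r_*$ is symmetric.
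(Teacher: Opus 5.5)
Your proposal is correct and follows essentially the same route as the paper. You define $\widetilde{A}$ as $A(r_*)$ plus the coarea integral of $\div(\nabla s/\|\nabla s\|)$, get integrability from a bound by a multiple of $\|\nabla s\|^{-1}$, and identify $\widetilde{A}$ with $A$ at regular values by applying the Divergence Theorem to the regularized fields $\nabla s/\sqrt{\|\nabla s\|^2+\varepsilon^2}$, then passing to the limit on both sides with dominated convergence. The only cosmetic difference is that you obtain the a.e.\ derivative formula by Lebesgue differentiation of the coarea integral, whereas the paper reads it off from the first variation of area on the full-measure open set of regular values.
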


Before we prove Lemma \ref{appendix-lemma: local AC of area of the level sets of s}, let us make some remarks.
Since $s$ is smooth on $M \setminus \{p\}$ with $\|\nabla s\| \to 1$ at $p$ by Lemma \ref{appendix-lemma: asymptotics of greens function near singularity}, the function $\|\nabla s\|$ is integrable on $M$, and so the coarea formula tells us that $A(r) \equiv \mathrm{area}(s = r)$ is a well-defined $L^1$ function for $r$ in the range of $s$.
Note that Lemma \ref{appendix-lemma: local AC of area of the level sets of s} does not claim that the function $r \mapsto \mathrm{area}(s = r)$ is (locally absolutely) continuous in $r$.
We only need a weaker statement because, in the proof of Theorem \ref{intro-th: sharp monotonic quantity} (cf.~\eqref{eq: proof of sharp monotonic quantity with s part 6}), we will integrate by parts the term $\int_u^r t A'(t) \dd t$ when $u$ and $r$ are regular values of $s$ and the $A'(t)$ term is understood as the right-hand side of \eqref{appendix-eq: derivative of area of level sets of s}; to do this rigorously, we would proceed as follows: since $A' = \widetilde{A}'$ almost everywhere, it follows that 
\begin{equation} \label{appendix-eq: how to integrate by parts the derivative of the area of the level sets of s}
    \int_u^r t A'(t) \dd t = \int_u^r t \widetilde{A}'(t) \dd t = r \widetilde{A}(r) - u \widetilde{A}(u) - \int_u^r \widetilde{A}(t) \dd t = r A(r) - u A(u) - \int_u^r A(t) \dd t,
\end{equation}
where the last equality follows because $A = \widetilde{A}$ at all regular values of $s$, which form a dense and full measure subset by Lemma \ref{appendix-lemma: regular values of G}.

We also note that the hypothesis of Lemma \ref{appendix-lemma: local AC of area of the level sets of s} is implied by that of Lemma \ref{appendix-lemma: local AC in sharp monotonic quantity} by the same argument as the one in the proof of Lemma \ref{appendix-lemma: general local AC result}.
Indeed, if there is some $r_0 > 0$ so that
\[
\int_{s \leq r_0} \frac{r_0^2 - s^2}{\|\nabla s\|} < \infty,
\]
then for every $r_1, r_2 \in (0, r_0)$ with $r_1 < r_2$, we have
\begin{equation} \label{appendix-eq: condition for local AC of area of level sets of s}
    \int_{r_1 \leq s \leq r_2} \frac{r_0^2 - s^2}{\|\nabla s\|} \geq (r_0^2 - r_2^2) \int_{r_1 \leq s \leq r_2} \frac{1}{\|\nabla s\|},
\end{equation}
and thus $\|\nabla s\|^{-1}$ is integrable on $\{r_1 \leq s \leq r_2\}$, so Lemma \ref{appendix-lemma: local AC of area of the level sets of s} applies in this case.

\begin{proof}[Proof of Lemma \ref{appendix-lemma: local AC of area of the level sets of s}]
Formula \eqref{appendix-eq: derivative of area of level sets of s} follows from the first variation of area, since $A(r)$ is smooth whenever $r$ is a regular value of $s$ and $\widetilde{A} = A$ at all such values.

Put $\nu := \nabla s/\|\nabla s\|$ and define $H := \div(\nu)$.
We claim that $H$ is integrable on $\{r_1 \leq s \leq r_2\}$ and that
\begin{equation} \label{appendix-eq: local AC for area of level sets of s equation to prove}
    A(r) - A(t) = \int_{t \leq s \leq r} H
\end{equation}
for all regular values $r, t \in (r_1, r_2)$ of $s$ with $t \leq r$.
This then immediately implies the desired result, as one could take the function
\[
\widetilde{A}(r) := A(r_0) + \int_{r_0}^r \left( \int_{s = t} \frac{H}{\|\nabla s\|} \right) \dd t
\]
where $r_0 \in (r_1, r_2)$ is a fixed regular value of $s$; the right-hand side above is locally absolutely continuous as the integral of an $L^1$ function.

As such, let us prove the above claim.
First, we remark that $H$ is defined almost everywhere on $M \setminus \{p\}$ by Lemma \ref{appendix-lemma: nodal sets have finite area}.
To see that $H$ is integrable on $(r_1, r_2)$, we may just compute
\[
H \equiv \div (\nu) = \frac{\Delta s}{\|\nabla s\|} - \frac{1}{\|\nabla s\|} \left\langle \nabla \|\nabla s\|, \nu \right\rangle = \frac{1}{\|\nabla s\|} \left( \Delta s - \frac{1}{\|\nabla s\|^2} \hess{s}(\nabla s, \nabla s) \right),
\]
where the last equality above used that $d \|\nabla s\| = \hess{s}(\nu, \cdot)$.
Now, $s$ is smooth on $M \setminus \{p\}$, so $|\Delta s|$ and $\|\!\hess{s}\|$ are bounded on $\{r_1 \leq s \leq r_2\}$.
This implies that there is a constant $C > 0$ so that $|H| \leq C/\|\nabla s\|$, and thus $H$ is integrable on $\{r_1 \leq s \leq r_2\}$.

To show \eqref{appendix-eq: local AC for area of level sets of s equation to prove}, we proceed using a regularization of $\nu$ inspired by the technique from {\cite[Section 2]{SternLevelSetScalarCurvature}}, as $H$ is only almost everywhere defined on $\{t \leq s \leq r\}$.
For $\delta > 0$, define $\nu_\delta := (\|\nabla s\|^2 + \delta)^{-1/2} \nabla s$ and $H_\delta := \div(\nu_\delta)$.
Then, both $\nu_\delta$ and $H_\delta$ are well-defined and smooth on $M \setminus \{p\}$, with $\nu_\delta \to \nu$ pointwise almost everywhere as $\delta \downarrow 0$, and we may compute (as for $\div(\nu)$)
\[
\div(\nu_\delta) = \frac{\Delta s}{\sqrt{\|\nabla s\|^2 + \delta}} - \frac{\hess{s}(\nabla s, \nabla s)}{(\|\nabla s\|^2 + \delta)^{3/2}}.
\]

Now, fix regular values $r, t \in (r_1, r_2)$ of $s$, with $t \leq r$.
The preceding computation implies that $\div(\nu_\delta) \to \div(\nu) \equiv H$ pointwise almost everywhere as $\delta \downarrow 0$ and that $|\!\div(\nu_\delta)| \leq \widetilde{C}/\|\nabla s\|$ on $\{r_1 \leq s \leq r_2\}$ for some constant $\widetilde{C} > 0$, so the Dominated Convergence Theorem tells us that
\begin{equation} \label{appendix-eq: local AC for area of level sets of s part 1}
    \lim_{\delta \downarrow 0} \int_{t \leq s \leq r} \div(\nu_\delta) = \int_{t \leq s \leq r} \div(\nu).
\end{equation}
At the same time, as both $t$ and $r$ are regular values of $s$, the Divergence Theorem allows us to write
\[
\int_{t \leq s \leq r} \div(\nu_\delta) = \int_{s = r} \langle \nu_\delta, \nu \rangle - \int_{s = t} \langle \nu_\delta, \nu \rangle,
\]
and the Dominated Convergence Theorem again implies that
\begin{equation} \label{appendix-eq: local AC for area of level sets of s part 2}
    \lim_{\delta \downarrow 0} \left( \int_{s = r} \langle \nu_\delta, \nu \rangle - \int_{s = t} \langle \nu_\delta, \nu \rangle \right) = A(r) - A(t).
\end{equation}
The proof of the desired claim is now finished by combining \eqref{appendix-eq: local AC for area of level sets of s part 1} with \eqref{appendix-eq: local AC for area of level sets of s part 2}.
\end{proof}

\begin{remark} \label{appendix-remark: integrability of the singular vector field for sharp monotonic quantity}
The same proof idea as in Lemma \ref{appendix-lemma: local AC of area of the level sets of s} (using the ``regularization'' of $\|\nabla s\|^{-1}$) also shows that, under the assumption that $\|\nabla s\|^{-1}$ is locally integrable on $M$, for all regular values $u < r$ of $s$, we have that $\div( \|\nabla s\|^{-1} \nabla (\|\nabla s\|^2 + ks^2/4))$ is integrable on the subset $\{u \leq s \leq r\}$, and the Divergence Theorem holds as
\pagebreak
\allowdisplaybreaks
\begin{align*}
    \int_{u \leq s \leq r} \div \left( \frac{1}{\|\nabla s\|} \nabla \left( \|\nabla s\|^2 + \frac{k}{4} s^2 \right) \right) &= \int_{s = r} \left\langle \nabla \left( \|\nabla s\|^2 + \frac{k}{4} s^2 \right), \frac{\nabla s}{\|\nabla s\|^2} \right\rangle \\
    &\quad \, - \int_{s = u} \left\langle \nabla \left( \|\nabla s\|^2 + \frac{k}{4} s^2 \right), \frac{\nabla s}{\|\nabla s\|^2} \right\rangle.
\end{align*}
As described in Section \ref{sec: intro}, this identity is needed in the proof of Theorem \ref{intro-th: sharp monotonic quantity}.
\end{remark}

The above concludes the technical discussion for Theorem \ref{intro-th: sharp monotonic quantity}.
In the rest of this section, we will show the well-definedness and local absolute continuity of the quantity from Theorem \ref{intro-th: alternative sharp monotonic quantity}.
Let us first rewrite the statement of this theorem in terms of $G$ and not $s$, as this is the way we will prove it in Section \ref{sec: alternative monotonic quantity proof of monotonicity}.
As $s \equiv G^{-1}$, by the chain rule, Theorem \ref{intro-th: alternative sharp monotonic quantity} is equivalently stated as the identity
\begin{equation*}
    \begin{split}
        \frac{\dd}{\dd r} \bigg( \frac{1}{r} &\int_{G = r} \|\nabla G\|^2 + \int_{G \geq r} \frac{3k(2G^2 + k - 6r^2)}{G^2(4G^2 - k)^2} \|\nabla G\|^3 \\
        &+ \int_{G \geq r} \left( \frac{3k}{4} - \frac{3k(G^2 - r^2)(4G^2 + 5k)}{8G^2(4G^2 - k)} \right) \|\nabla G\| - 4 \pi r \bigg) \leq 0,
    \end{split}
\end{equation*}
with equality holding at some $r > \sqrt{k}/2$ if and only if $(M, g)$ is isometric to the three-sphere of curvature $k$.
The first integral term from above, the one involving the level sets of $G$, is locally finite and absolutely continuous by Lemma \ref{appendix-lemma: local AC of energy term}.
Let us now prove that the two integrals over the superlevel sets of $G$ satisfy the same property.

\begin{lemma} \label{appendix-lemma: welldefinedness of alternative monotonic quantity}
Suppose, in addition, that $G \geq \sqrt{k}/2$.
Then, for all $r \geq \sqrt{k}/2$, the functions
\[
\frac{3k(2G^2 + k - 6r^2)}{G^2(4G^2 - k)^2} \|\nabla G\|^3 \text{ and } \left( \frac{3k}{4} - \frac{3k(G^2 - r^2)(4G^2 + 5k)}{8G^2(4G^2 - k)} \right) \|\nabla G\|
\]
are integrable on $\{G \geq r\}$.
\end{lemma}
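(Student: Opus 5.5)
The integrability near $p$ and away from $p$ will be handled separately. Only two kinds of trouble can occur. The first is the behaviour of $\|\nabla G\|$ and $G$ near the singularity $p$, where $G \to \infty$. The second is the denominators $G^2$ and $4G^2 - k$, which could vanish when $G = \sqrt{k}/2$.

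\textbf{Near $p$.} By Lemma \ref{appendix-lemma: asymptotics of greens function near singularity}, in a punctured ball $B(p,\rho)$ we have $G \sim \dist(p,\cdot)^{-1}$ and $\|\nabla G\| \sim \dist(p,\cdot)^{-2}$. The first coefficient decays like $G^{-4}$, since the numerator is $O(G^2)$ and the denominator is $G^2(4G^2-k)^2 \sim 16 G^6$. Hence the first integrand is $O(\dist^{4}\cdot \dist^{-6}) = O(\dist^{-2})$, which is integrable in dimension three.

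For the second integrand, expanding gives
\[
\frac{3k}{4} - \frac{3k(G^2 - r^2)(4G^2 + 5k)}{8G^2(4G^2 - k)} = \frac{3k}{4} - \frac{3k}{8} + O(G^{-2}) = O(1).
\]
The coefficient is therefore bounded, and the integrand is $O(\dist^{-2})$, again integrable. In fact one could rewrite both integrals via the coarea formula as $\int_r^\infty c(t)E(t)\,\dd t$ and $\int_r^\infty c(t)A(t)\,\dd t$, using $E(t)\sim 4\pi t^2$ and $A(t)\sim 4\pi t^{-2}$. This gives the same conclusion, but the pointwise bound is simpler.

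\textbf{Away from $p$.} On $\{G \geq r\}\setminus B(p,\rho)$, the function $G$ is smooth and $\|\nabla G\|$ is bounded. So integrability reduces to controlling the coefficients, i.e.\ the factor $(4G^2-k)^{-1}$ or $(4G^2-k)^{-2}$ near the set $\{G = \sqrt{k}/2\}$. This is the main obstacle, and it only matters when $r = \sqrt{k}/2$. For $r > \sqrt{k}/2$ the coefficients are bounded on $\{G\ge r\}$, and we are done.

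For $r = \sqrt{k}/2$, the key observation is that every point where $G = \sqrt{k}/2$ is an interior global minimum of $G$, because $G \geq \sqrt{k}/2$ by hypothesis. Such points are critical, so $\nabla G = 0$ there. Since $\hess G$ is bounded on the compact set $\{G\le \sqrt k\}$, Taylor expansion at the nearest minimum point suggests
\[
\|\nabla G\|^2 \leq C\,(G - \sqrt{k}/2).
\]
I would prove this rigorously via the standard inequality $\|\nabla u\|^2 \leq 2\,\sup\|\hess u\|\, u$ for nonnegative $C^2$ functions $u$. Here one applies it to $u := G - \sqrt{k}/2 \geq 0$ on a neighbourhood avoiding $p$, using a compact-support or uniform-bound version so that boundary effects are harmless.

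With this bound in hand:
\begin{itemize}
\item In the first integrand, $\|\nabla G\|^3/(4G^2-k)^2 \lesssim u^{3/2}/u^2 = u^{-1/2}$. This is still singular, so the plan is to lose less. We have $\|\nabla G\|^3 (4G^2-k)^{-2} \le C \|\nabla G\| \,\|\nabla G\|^2 u^{-2} \le C' \|\nabla G\|\, u^{-1}$. By the coarea formula, the integral of this over $\{\sqrt k/2 \le G\le \sqrt k\}$ equals $\int t^{-1}\cdots$ with the area $A(t)$; this alone might diverge. So I would instead bound by $\|\nabla G\|^3 u^{-2}\le C u^{-1/2}$ and then note that $u^{-1/2}$ is integrable near the minimum set. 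This holds because $\vol\{u<\epsilon\}\le C\epsilon^{1/2}$, which I would obtain from Lemma \ref{appendix-lemma: nodal sets have finite area} together with the growth $\|\nabla G\|^2\le Cu$ (distance to the minimum set is at least $c\sqrt{u}$… or, alternatively, Łojasiewicz-free). If this volume estimate is delicate, I would try first the coarea route: $\int u^{-1/2}\|\nabla G\|\,\|\nabla G\|^2 u^{-3/2}$.
\item In the second integrand the coefficient has only a single $(4G^2-k)^{-1}$. The integrand is thus bounded by $C\|\nabla G\|u^{-1}\le C u^{-1/2}$, and the same estimate applies.
\end{itemize}

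I expect establishing integrability of $u^{-1/2}$ near $\{G=\sqrt k/2\}$ to be the hardest step.
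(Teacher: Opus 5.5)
Your treatment of the singularity at $p$ and of the case $r > \sqrt{k}/2$ is correct and matches the paper. The gap is in the case $r = \sqrt{k}/2$. There you bound the coefficients by powers of $u^{-1}$, with $u := G - \sqrt{k}/2$, without noticing that at $r^2 = k/4$ the numerators vanish on the same set as the denominators. Indeed $2G^2 + k - 6r^2 = \tfrac12(4G^2-k)$ and $G^2 - r^2 = \tfrac14(4G^2-k)$, so
\[
\frac{3k(2G^2 + k - 3k/2)}{G^2(4G^2-k)^2}\|\nabla G\|^3 = \frac{3k}{8}\,\frac{\|\nabla G\|^3}{G^2(G^2-k/4)}, \qquad \frac{3k}{4} - \frac{3k(G^2 - k/4)(4G^2+5k)}{8G^2(4G^2-k)} = \frac{3k(4G^2-5k)}{32G^2}.
\]
The second coefficient is smooth and bounded on $M\setminus\{p\}$, so there is nothing to prove near the minimum set. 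In the first integrand only one vanishing factor remains, and $G^2 - k/4 \geq \sqrt{k}\,u$. Your own bound $\|\nabla G\|^2 \le C u$ then gives $\|\nabla G\|^3/(G^2(G^2-k/4)) \le C'\|\nabla G\|$, which is bounded near $\{G = \sqrt{k}/2\}$. This cancellation plus the Taylor bound is exactly the paper's argument.

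Without the cancellation, your plan rests on the integrability of $u^{-1/2}$ near $\{G = \sqrt{k}/2\}$. You leave this open, and it is false in general. The hypotheses do not exclude that this minimum set contains a piece of smooth surface; minimum sets of Green's functions can be surfaces, as for $\R\P^3$. On such a surface $u \approx c\rho^2$ in Fermi coordinates, so $u^{-1/2}\approx c'|\rho|^{-1}$, which is not integrable across it.

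The volume estimate you propose would not close the argument either, for two reasons. First, $\vol\{u<\epsilon\}\le C\epsilon^{1/2}$ only yields $\vol\{u^{-1/2}>\lambda\}\le C\lambda^{-1}$, whose integral in $\lambda$ diverges logarithmically. Second, the Lipschitz bound on $\sqrt{u}$ coming from $\|\nabla G\|^2 \le Cu$ gives $u \lesssim \dist(\cdot, \{u = 0\})^2$. That bounds $\vol\{u < \epsilon\}$ from below, not from above.
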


\begin{proof}
Fix $r \geq \sqrt{k}/2$.
If $r > \sqrt{k}/2$, then $4G^2 - k$ is bounded away from zero on $\{G \geq r\}$.
Moreover, both functions are continuous on $\{G \geq r\}$, hence the non-integrability may only arise near the singularity $p$.
However, near $p$, by Lemma \ref{appendix-lemma: asymptotics of greens function near singularity}, we know that $G \to \infty$, and so
\[
\frac{3k(2G^2 + k - 6r^2)}{G^2(4G^2 - k)^2} \|\nabla G\|^3 \sim \frac{6k G^2}{16G^6} \|\nabla G\|^3 = \frac{3k}{8} \frac{\|\nabla G\|^3}{G^4}
\]
and
\[
\left( \frac{3k}{4} - \frac{3k(G^2 - r^2)(4G^2 + 5k)}{8G^2(4G^2 - k)} \right) \|\nabla G\| \sim \left( \frac{3k}{4} - \frac{12kG^4}{32G^4} \right) \|\nabla G\| = \frac{3k}{8} \|\nabla G\|.
\]
The same lemma also tells us that $G \sim \dist(p, \cdot)^{-1}$ and $\|\nabla G\| \sim \dist(p, \cdot)^{-2}$ near $p$, and so both $G^{-4}\|\nabla G\|^3$ and $\|\nabla G\|$ are $O(\dist(p, \cdot)^{-2})$, hence they are integrable as the dimension of $M$ is three.

Now, suppose that $r = \sqrt{k}/2$; in this case, we need to show that the functions
\[
\frac{3k(2G^2 + k - 3k/2)}{G^2(4G^2 - k)^2} \|\nabla G\|^3 \text{ and } \left( \frac{3k}{4} - \frac{3k(G^2 - k/4)(4G^2 + 5k)}{8G^2(4G^2 - k)} \right) \|\nabla G\|
\]
are integrable on $M$.
The same argument as in the paragraph above shows that these functions are integrable away from the level set $\{G = \sqrt{k}/2\}$, so this is the only place where we still have to analyse the integrability.
For the first function, note that we can rewrite it as
\[
\frac{3k(2G^2 + k - 3k/2)}{G^2(4G^2 - k)^2} \|\nabla G\|^3 = \frac{3k}{8} \frac{\|\nabla G\|^3}{G^2(G^2 - k^2/4)} = \frac{3k}{8} \frac{\|\nabla G\|^3}{G^2(G + \sqrt{k}/2) (G - \sqrt{k}/2)}.
\]
However, near the level set $\{G = \sqrt{k}/2\}$, which is necessarily a minimum set of $G$ (since, by assumption, $G \geq \sqrt{k}/2$), $G$ is a smooth function, hence $\|\!\hess{G}\|$ is bounded, and thus Taylor's theorem (along geodesics in the direction of $-\nabla G$) tells us that there is a constant $C > 0$ so that $\|\nabla G\|^2 \leq C (G - \sqrt{k}/2)$ near $\{G = \sqrt{k}/2\}$.
It then follows that, near this minimum level set of $G$, we have
\[
\frac{3k}{8} \frac{\|\nabla G\|^3}{G^2(G + \sqrt{k}/2) (G - \sqrt{k}/2)} \leq C \cdot \frac{3k}{8} \frac{\|\nabla G\|}{G^2(G + \sqrt{k}/2)},
\]
and the right-hand side is integrable as it is continuous.
For the second function, we may also rewrite
\[
\left( \frac{3k}{4} - \frac{3k(G^2 - k/4)(4G^2 + 5k)}{8G^2(4G^2 - k)} \right) \|\nabla G\| = \frac{3k (4G^2 - 5k)}{32 G^2} \|\nabla G\|,
\]
and the function also integrable, as it is continuous near the level set $\{G = \sqrt{k}/2\}$.
\end{proof}

We finish this section with the local absolute continuity of the two integral terms from Theorem \ref{intro-th: alternative sharp monotonic quantity} which involve the superlevel sets of $G$.
This will also be a consequence of Lemma \ref{appendix-lemma: general local AC result}.

\begin{lemma} \label{appendix-lemma: local AC of alternative monotonic quantity}
Suppose, in addition, that $G \geq \sqrt{k}/2$.
Then, the functions
\[
r \mapsto \int_{G \geq r} \frac{3k(2G^2 + k - 6r^2)}{G^2(4G^2 - k)^2} \|\nabla G\|^3 \text{ and } r \mapsto \int_{G \geq r} \left( \frac{3k}{4} - \frac{3k(G^2 - r^2)(4G^2 + 5k)}{8G^2(4G^2 - k)} \right) \|\nabla G\|
\]
are locally absolutely continuous for $r \in (m, \infty)$, with
\begin{equation} \label{appendix-eq: derivative of second term in alternative monotonic quantity}
    \frac{d}{dr} \int_{G \geq r} \frac{3k(2G^2 + k - 6r^2)}{G^2(4G^2 - k)^2} \|\nabla G\|^3 = \frac{3k}{r^2(4r^2 - k)} \int_{G = r} \|\nabla G\|^2 - \int_r^\infty \frac{36kr}{t^2(4t^2 - k)^2} \left( \int_{G = t} \|\nabla G\|^2 \right) \dd t
\end{equation}
and
\begin{equation} \label{appendix-eq: derivative of third term in alternative monotonic quantity}
    \frac{d}{dr} \int_{G \geq r} \left( \frac{3k}{4} - \frac{3k(G^2 - r^2)(4G^2 + 5k)}{8G^2(4G^2 - k)} \right) \|\nabla G\| = - \frac{3k}{4} \mathrm{area}(G = r) + \int_r^\infty \frac{3kr(4t^2 + 5k)}{4t^2(4t^2 - k)} \mathrm{area}(G = t) \dd t
\end{equation}
for almost every $r \geq m$, where $\mathrm{area}(G = t)$ denotes the two-dimensional Hausdorff measure of the level set $\{G = t\}$.
\end{lemma}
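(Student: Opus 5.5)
The plan is to reduce both terms, via the coarea formula, to one-variable integrals to which Lemma \ref{appendix-lemma: general local AC result} (or a mild variant of it) applies. Put $E(t) := \int_{G = t} \|\nabla G\|^2$ and $A(t) := \mathrm{area}(G = t)$. Since $\|\nabla G\|^3 = \|\nabla G\|^2 \cdot \|\nabla G\|$, the coarea formula gives
\[
\int_{G \geq r} u_1(G, r) \|\nabla G\|^3 = \int_r^\infty u_1(t, r) E(t) \dd t, \qquad \int_{G \geq r} u_2(G, r) \|\nabla G\| = \int_r^\infty u_2(t, r) A(t) \dd t,
\]
with $u_1, u_2$ the functions from the proof of Theorem \ref{intro-th: alternative sharp monotonic quantity}. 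Lemma \ref{appendix-lemma: welldefinedness of alternative monotonic quantity} guarantees that these integrals are finite for every $r \geq \sqrt{k}/2$, hence for every $r$ in the relevant range.

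The key observation is that $u_1$ and $u_2$ are affine in $r^2$:
\[
u_1(t, r) = a_1(t) - r^2 b_1(t), \qquad u_2(t, r) = a_2(t) + r^2 b_2(t).
\]
Here
\[
a_1 = \frac{3k(2t^2 + k)}{t^2(4t^2 - k)^2}, \quad b_1 = \frac{18k}{t^2(4t^2 - k)^2}, \quad a_2 = \frac{3k}{4} - \frac{3k(4t^2 + 5k)}{8(4t^2 - k)}, \quad b_2 = \frac{3k(4t^2 + 5k)}{8t^2(4t^2 - k)}.
\]
Thus each term splits as $\int_r^\infty a_i f_i - r^2 \int_r^\infty (\pm b_i) f_i$, with $f_1 = E$ and $f_2 = A$. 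For a fixed $r_0$ with $m < r_0 < r$, the first piece $\int_r^\infty a_i f_i$ is an indefinite integral of an $L^1$ function on $[r_0, \infty)$, hence absolutely continuous with derivative $-a_i(r) f_i(r)$ almost everywhere. For the second piece, write $r^2 = (r^2 - r_0^2) + r_0^2$ and apply Lemma \ref{appendix-lemma: general local AC result} to $f = b_i f_i$; the product rule then yields the derivative $2r \int_r^\infty b_i f_i - r^2 b_i(r) f_i(r)$. Collecting terms, the boundary contributions are $-u_1(r, r) E(r) = \frac{3k}{r^2(4r^2 - k)} E(r)$ and $-u_2(r, r) A(r) = -\frac{3k}{4} A(r)$. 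The interior contributions are $-2r \int_r^\infty b_1 E = -\int_r^\infty \frac{36kr}{t^2(4t^2-k)^2} E(t) \dd t$ and $+2r \int_r^\infty b_2 A$, which matches \eqref{appendix-eq: derivative of second term in alternative monotonic quantity} and \eqref{appendix-eq: derivative of third term in alternative monotonic quantity}. At regular values, $E$ is continuous by Lemma \ref{appendix-lemma: local AC of energy term}, so the first formula in fact holds pointwise there.

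The main obstacle is verifying the integrability hypotheses. We need $a_i f_i$ and $b_i f_i$ to be individually in $L^1$ on $[r_0, \infty)$, and $\int (t^2 - r_0^2) b_i f_i < \infty$, rather than only the specific combinations handled by Lemma \ref{appendix-lemma: welldefinedness of alternative monotonic quantity}. Away from $t = \sqrt{k}/2$ the coefficients are bounded, and $E$, $A$ are integrable on compact $t$-ranges because $\|\nabla G\|^3$ and $\|\nabla G\|$ are locally integrable on $\{G \geq r_0\}$ minus a neighbourhood of $p$. Near $p$ we use Lemma \ref{appendix-lemma: asymptotics of greens function near singularity}: $E(t) \sim 4\pi t^2$ and $A(t) \sim 4\pi t^{-2}$ as $t \to \infty$, while $a_1, b_1 = O(t^{-6})$ and $a_2 = O(1)$, $b_2 = O(t^{-2})$. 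It follows that $t^2 b_1 E = O(t^{-2})$, $a_2 A = O(t^{-2})$, and $t^2 b_2 A = O(t^{-2})$, all of which are integrable at infinity. Since $r_0 > m \geq \sqrt{k}/2$, the factor $4t^2 - k$ stays bounded away from zero, so no singular behaviour arises. Local absolute continuity on $(m, \infty)$ then follows, since $r_0 > m$ was arbitrary.
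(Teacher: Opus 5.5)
Your proposal is correct and follows the paper's proof: the coarea formula, a splitting of $u_1, u_2$ as affine functions of $r^2$, and Lemma \ref{appendix-lemma: general local AC result}. The only difference is the base point: the paper centres the splitting at $r^2 = k/4$, so it relies on integrability down to $t = \sqrt{k}/2$ (the delicate case of Lemma \ref{appendix-lemma: welldefinedness of alternative monotonic quantity}), whereas your arbitrary $r_0 > m$ keeps $4t^2 - k$ bounded away from zero, which suffices for the open interval $(m, \infty)$. One harmless slip: $a_1(t) = O(t^{-4})$, not $O(t^{-6})$, but $a_1 E = O(t^{-2})$ is still integrable at infinity.
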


\begin{proof}
For ease of notation, put
\[
u_1(t, r) := \frac{3k(2t^2 + k - 6r^2)}{t^2(4t^2 - k)^2}, \qquad u_2(t, r) := \frac{3k}{4} - \frac{3k(t^2 - r^2)(4t^2 + 5k)}{8t^2(4t^2 - k)},
\]
and also denote by $E(r) := \int_{G = r} \|\nabla G||^2$ and $A(r) := \mathrm{area}(G = r)$.
By the coarea formula, we see that
\[
\int_{G \geq r} \frac{3k(2G^2 + k - 6r^2)}{G^2(4G^2 - k)^2} \|\nabla G\|^3 = \int_r^\infty u_1(t, r) E(t) \dd t
\]
and that
\[
\int_{G \geq r} \left( \frac{3k}{4} - \frac{3k(G^2 - r^2)(4G^2 + 5k)}{8G^2(4G^2 - k)} \right) \|\nabla G\| = \int_r^\infty u_2(t, r) A(t) \dd t.
\]
For the first function, a direct computation shows that we may write
\[
\int_r^\infty u_1(t, r) E(t) \dd t = \int_r^\infty u_1(t, \sqrt{k}/2) E(t) \dd t - (r^2 - k/4) \int_r^\infty \frac{18k}{t^2(4t^2 - k)^2} E(t) \dd t.
\]
The first integral on the right-hand side above is locally absolutely continuous in $r$ (as the integrand is an integrable function), and the second one is locally absolutely continuous by Lemma \ref{appendix-lemma: general local AC result}, which also directly allows us to compute its derivative.
For the second function, we may again write
\[
\int_r^\infty u_2(t, r) A(t) \dd t = \int_r^\infty u_2(t, \sqrt{k}/2) A(t) \dd t + (r^2 - k/4) \int_r^\infty \frac{3k(4t^2 + 5k)}{8t^2(4t^2 - k)} A(t) \dd t,
\]
and the same argument as before finishes the proof of this lemma.
\end{proof}
\section{Path-connectedness of the level sets of Green's function} \label{appendix-sec: connectedness of the level sets of greens functions}

This section is dedicated to proving that no topological assumption may ensure that the (regular) level sets of Green's functions investigated in this manuscript are path-connected.
The arguments that follow are not particular to dimension three, and may be readily generalized to higher dimensions.
Throughout the rest of this section, for $K > 0$, we will denote by $\S^n_K$ the round $n$-sphere of constant curvature $K$, and we also set $\S^n := \S^n_1$, the unit, round $n$-sphere.

The concrete goal of this section is to prove the following result.

\begin{theorem} \label{appendix-th: arbitrarily many connected components on every psc topology}
Let $M$ be a closed, connected three-manifold which admits metrics of positive scalar curvature, let $k > 0$, and fix a point $p \in M$.

Then, for every integer $n \geq 1$, there exists a Riemannian metric $g$ on $M$ with $\scal_g \geq 6k$ so that, if $G$ is Green's function for the operator $-\Delta_g + 3k/4$ with singularity at $p$, then $G$ has at least one level set which has at least $n$ path-connected components.
\end{theorem}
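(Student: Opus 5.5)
The strategy is the one sketched in the introduction: realize $M$ as $M \# \S^3 \# \cdots \# \S^3$ (with $n$ spheres), which is diffeomorphic to $M$. Then use a Gromov--Lawson--Schoen--Yau metric in which each $\S^3$ summand is attached through a long neck isometric to $[0,\ell]\times\S^2_K$ with $K\geq 3k$. Finally, show that $G$ is exponentially small in $\ell$ deep inside each $\S^3$ summand, while it stays bounded below by a fixed positive quantity somewhere on the ``body'' of each neck near its junction with $M$. This forces a single level set to separate each summand from $p$.

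\textbf{Step 1 (construction).} Fix a metric $g_0$ on $M$ with $\scal_{g_0}>0$, and rescale it so that $\scal_{g_0}\geq 6k$ with room to spare. Choose $n$ distinct points $q_1,\dots,q_n\neq p$. Take $n$ round spheres $\S^3_{K'}$, each of curvature large enough that $\scal\geq 6k$. Perform the Gromov--Lawson connected sum at each $q_i$. Their construction, as in {\cite{GromovLawsonPSC}}, allows the gluing region to contain an arbitrarily long cylinder $[0,\ell]\times\S^2_K$, with $\scal$ bounded below uniformly in $\ell$; taking the tube radius small makes $K\geq 3k$ and $\scal\geq 6k$. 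Call the resulting metric $g_\ell$. Outside the necks, the geometry is independent of $\ell$: a fixed piece $M_0\ni p$ and $n$ fixed caps $Y_i$.

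\textbf{Step 2 (decay along necks).} On a neck with coordinate $x\in[0,\ell]$, $G$ solves $-\Delta G+\tfrac{3k}{4}G=0$. Average $G$ over the $\S^2_K$ slices to obtain $\bar G(x)$. Since the slices are totally geodesic round spheres in a product, $\bar G''=\tfrac{3k}{4}\bar G$. Moreover, $G-\bar G$ satisfies an equation whose lowest transverse eigenvalue is $2K$, so it is also exponentially controlled. By the maximum principle on the region beyond the neck (the cap $Y_i$ together with the neck), $\sup_{Y_i}G\leq \sup_{\{x=0\}}G$. The cap contains no singularity and the potential is positive, so $G$ restricted to the neck is a positive solution bounded by $C:=\sup_{\partial M_0}G$. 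Comparing with the barrier
\[
\varphi(x)=C\,\frac{\cosh(\sqrt{3k}(\ell-x)/2)}{\cosh(\sqrt{3k}\,\ell/2)}\cdot 2,
\]
which is a supersolution on the neck with Neumann-type control at $x=\ell$ (the cap is handled by the maximum principle as above), gives $G\leq 2Ce^{-\sqrt{3k}\,\ell/2}$ near $x=\ell$, and hence on $Y_i$. One must also check that $C$ is bounded independently of $\ell$. To do so, compare $G$ on $M_0$ with the Green's function of a fixed compact manifold with boundary (Dirichlet data $0$ dominates after adding the bounded neck contribution), or argue by testing the equation against constants. This uniformity is the step I expect to be the main obstacle: it requires showing that $G$ on $\partial M_0$, and its lower bound there, converge as $\ell\to\infty$ to the Green's function of $M_0\cup$(half-infinite cylinders). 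I would prove this by a compactness argument, using locally uniform elliptic estimates away from $p$ together with the decay above.

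\textbf{Step 3 (separation).} From Step 2, $\min_{\partial M_0}G\geq c>0$ and $\max_{Y_i}G\leq Ce^{-\sqrt{3k}\ell/2}$, uniformly in $\ell$. Choose $\ell$ so large that $Ce^{-\sqrt{3k}\ell/2}<c/2$, and pick a regular value $t\in(Ce^{-\sqrt{3k}\ell/2},c)$ (Lemma \ref{appendix-lemma: regular values of G}). Every path from $Y_i$ to $Y_j$ ($i\neq j$) passes through $M_0$, where $G>t$ near the neck ends. Hence the superlevel set $\{G\geq t\}$ contains $M_0$ and separates the components $\{G<t\}\cap(\text{neck}_i\cup Y_i)$. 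Each of these components is nonempty, and its boundary is a union of components of $\{G=t\}$ lying inside neck $i$. These boundaries are therefore disjoint for distinct $i$, giving at least $n$ path-connected components of $\{G=t\}$.
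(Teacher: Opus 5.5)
\textbf{Genuine gap: the two bounds on the junction slices that Step 3 depends on.} Your overall architecture is the paper's: Gromov--Lawson necks, exponential smallness of $G$ on the caps, a uniform positive lower bound on the side of $p$, and separation by one level set. Step 3 rests entirely on two bounds on the slices $\{x=0\}$ that must hold uniformly in $\ell$. You flag the upper bound as the main obstacle and leave it open. You then assert the lower bound ``from Step 2'', but Step 2 does not prove it. The compactness route you sketch itself needs locally uniform bounds as input. Both bounds have short direct proofs, which is what the paper does.
\begin{itemize}
\item \emph{Upper bound.} Testing $(-\Delta+\frac{3k}{4})G=4\pi\delta_p$ against the constant $1$ gives $\int_M G=16\pi/(3k)$ for every $\ell$. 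The local Harnack inequality on a fixed neighbourhood $U$ of the slice, whose geometry does not depend on $\ell$, then gives $\sup_{\{x=0\}}G\le C_H\inf_U G\le C_H\,\mathrm{vol}(U)^{-1}\,16\pi/(3k)$.
\item \emph{Lower bound.} No limit is needed. Let $\widetilde G$ be the Dirichlet Green's function of $-\Delta+\frac{3k}{4}$ on a fixed domain $U'$ containing $M_0$ and a short initial segment of each neck. Then $G-\widetilde G$ solves the homogeneous equation on $U'$, since the singularities at $p$ cancel, and it is positive on $\partial U'$. Hence $G\ge\widetilde G\ge c>0$ on $M_0$, independently of $\ell$.
\end{itemize}
The lower-bound argument also justifies your claim $\{G\ge t\}\supset M_0$. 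That claim does not follow from a bound on $\partial M_0$ alone, because $\Delta G=\frac{3k}{4}G>0$ allows interior minima. In fact your separation argument only needs $G>t$ on the junction slices: each component of $\{G=t\}$ that meets $\text{neck}_i\cup Y_i$ then stays inside that region.

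\textbf{The barrier argument needs a fix at $x=\ell$.} Your barrier is a legitimate, somewhat more economical substitute for the paper's decay proof. The paper shows that $f(r)=\int_{\{r\}\times\S^2_K}G$ is decreasing, solves $f''=\frac{3k}{4}f$, and then uses Harnack at both ends of the neck. As written, though, your comparison does not close at $x=\ell$. On the neck alone you would need $\partial_x G\le 0$ pointwise on $\{x=\ell\}$ to use $\varphi'(\ell)=0$, and only the slice average of $\partial_x G$ is known to be negative. The fix is to extend $\varphi$ over $Y_i$ by the constant $\varphi(\ell)$:
\begin{itemize}
\item Since $\varphi'(\ell)=0$, the extended function is $C^{1,1}$.
\item It satisfies $L\varphi\ge 0$ on both pieces, because a positive constant is a supersolution of $-\Delta+\frac{3k}{4}$.
\item The weak maximum principle on $\text{neck}_i\cup Y_i$, whose only boundary is $\{x=0\}$, then gives $\sup_{Y_i}G\le 2C/\cosh(\sqrt{3k}\,\ell/2)$.
\end{itemize}
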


Theorem \ref{appendix-th: arbitrarily many connected components on every psc topology} will be a consequence of the Gromov-Lawson-Schoen-Yau Surgery Theorem for positive scalar curvature (cf.~{\cite{SchoenYauPSC, GromovLawsonPSC}}).
Let us shortly describe the proof.
By this Surgery Theorem, if $g$ is a Riemannian metric on $M$ with $\scal \gg 6k$ and $p \in M$, then for any point $q \neq p$ in $M$, there is a chart $U \subset M$ of $q$ with $p \notin U$ on which we may deform $g$ to a new metric $g'$ so that $g = g'$ on $M \setminus \overline{U}$, $\scal_{g'} \geq 6k$ on $M$, and $g'$ is isometric to the direct product $(0, \ell) \times \S^2_K$ near $q$ for some $\ell, K > 0$ which may be chosen arbitrarily large.
This way, we may deform $M$ into $M \# (n \cdot \S^3)$ (which does not change the topology, as the resulting manifold is diffeomorphic to $M$) with a metric $g = g_n$ for which $\scal_g \geq 6k$ and for which there are regions in $M$ which are isometric to cylinders $(0, \ell_i) \times \S^2_{K_i}$ with $\ell_i$ arbitrarily large.
Then, we will show that Green's function $G$ for the operator $-\Delta_g + 3k/4$ decays exponentially fast (with respect to $\ell_i$) on each such region, whilst being bounded below by a constant independent of each $\ell_i$ outside the gluing regions.
It then follows that the level sets $G^{-1}(t)$ for $t$ close to $0$ intersect each cylinder, and hence must have $n$ components.

The aforementioned decay estimate is described in the following proposition.

\begin{proposition} \label{appendix-prop: decay of greens function on long cylindrical tubes}
Suppose that $(X, g)$ and $(Y, h)$ are compact, connected, Riemannian three-manifolds with boundary and scalar curvatures bounded below by $6k$ for some $k > 0$.
Assume that both $\partial X$ and $\partial Y$ have collar neighbourhoods $C_X$ and $C_Y$ isometric to the direct product manifold $[0, \varepsilon) \times \S^2_K$ for some $\varepsilon > 0$ and $K \geq 3k$.

For $\ell \geq 1$, consider the Riemannian manifold $M$ obtained from the (smooth) isometric gluing 
\[
M := X \cup_{\partial X \sim \{0\} \times \S^2_K} ([0, \ell] \times \S^2_K) \cup_{\{\ell\} \times \S^2_K \sim \partial Y} Y,
\]
where $[0, \ell] \times \S^2_K$ is endowed with the direct product metric.
Fix a point $p \in \mathrm{int}(X) \setminus C_X$ and let $G = G_\ell$ be Green's function for the operator $- \Delta + 3k/4$, with singularity at $p$.

Then, there is a constant $C > 0$, dependant on $\varepsilon$, $X$, $Y$, $k$, and $K$, but independent of $\ell$, so that
\[
\sup_{\widetilde{Y}} G \leq C e^{-\ell \sqrt{3k}/2},
\]
where $\widetilde{Y}$ denotes the region inside $Y$ obtained after removing the subset $[0, \varepsilon/2) \times \S^2_K \subset C_Y$.
\end{proposition}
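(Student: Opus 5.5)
The plan is to compare $G$ on the cylinder and on $Y$ with explicit exponential barriers, using the maximum principle for the operator $L = -\Delta + 3k/4$. Since $3k/4 > 0$, $L$ satisfies the maximum principle on any domain. Hence a supersolution $\phi$ with $L\phi \geq 0$ that dominates $G$ on the boundary dominates $G$ in the interior.

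The first step is a uniform bound on $G$ over the slice $\{0\} \times \S^2_K = \partial X$, independent of $\ell$. For this I would compare $G_\ell$ with Green's function $G_\infty$ of $L$ on the semi-infinite manifold $X \cup ([0,\infty) \times \S^2_K)$. Alternatively, I would use a local argument: $G_\ell$ is $L$-harmonic away from $p$, so $G_\ell$ is bounded by a multiple of $\dist(p,\cdot)^{-1}$ near $p$ uniformly in $\ell$. The uniform bound comes from the representation $G_\ell(x) = \int G \, L\psi$ for a cutoff $\psi$. Concretely, I would check that $\int_M G_\ell = 4\pi/(3k/4)$ by testing the equation $LG = 4\pi\delta_p$ against $1$, so $\|G_\ell\|_{L^1}$ is uniform in $\ell$. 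Local elliptic (Moser/De Giorgi) estimates for the equation $\Delta G = (3k/4)G$, which is a subsolution-type bound since $G > 0$, then give
\[
\sup_{\{x \in C_X \cup [0,\ell]\times\S^2_K\}} G \leq C_0
\]
on the region at distance $\geq \varepsilon/2$ from $p$'s neighbourhood. The reason is that the geometry has bounded curvature and injectivity radius uniformly in $\ell$, so unit balls have uniform constants. This $C_0$ depends only on $X$, $Y$, $k$, $K$, $\varepsilon$.

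The second step, which I expect to be the main point, is the decay along the cylinder. On $[0,\ell] \times \S^2_K$ with coordinate $t$, consider
\[
\phi(t) = C_0\, \frac{\cosh\bigl(\tfrac{\sqrt{3k}}{2}(\ell + \varepsilon/2 - t)\bigr)}{\cosh\bigl(\tfrac{\sqrt{3k}}{2}(\ell+\varepsilon/2)\bigr)} .
\]
This $\phi$ satisfies $L\phi = 0$, since $-\phi'' + (3k/4)\phi = 0$. The difficulty is the boundary at the far end: on $Y$ we have no control of $G$ a priori. To handle it, I would apply the maximum principle on the whole region $Y \cup [0,\ell]\times\S^2_K$ with only the boundary $\{0\}\times\S^2_K$. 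For that I need a positive supersolution of $L$ on $Y \cup$ cylinder that is $\geq 1$ at $t=0$ and of size $\lesssim e^{-\ell\sqrt{3k}/2}$ on $\widetilde Y$.

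I would take $\psi = e^{-\frac{\sqrt{3k}}{2}t}$ on the cylinder. On $Y$ I would take $e^{-\frac{\sqrt{3k}}{2}\ell}\,\eta$, where $\eta$ is a positive function on $Y$ with $L\eta \geq 0$. I would choose $\eta$ to be the solution of $L\eta = 0$ in $Y$ with $\eta = 1$ on $\partial Y$. Then $0 < \eta \leq 1$, and its outward normal derivative is a fixed quantity depending on $Y$. To glue $\psi$ and $\eta$ into a viscosity/weak supersolution, I need the minimum of two supersolutions, or else matching normal derivatives with the kink going the right way. I would glue at the interface via $\min(\psi, e^{-\sqrt{3k}\ell/2}\cdot c\,\eta')$ for suitable solutions, or simply use the fact that $\min$ of supersolutions is a supersolution after adjusting constants so the transition happens inside the collar $C_Y$ of width $\varepsilon$. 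In the collar both functions depend only on $t$ and can be compared explicitly; this is why the collar is assumed to be a product.

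With this barrier in hand, the maximum principle gives $G \leq C_0 \cdot (\text{barrier})$ on $Y \cup$ cylinder. On $\widetilde Y$ this yields $G \leq C e^{-\ell\sqrt{3k}/2}$, where $C$ depends on $\varepsilon$, $Y$, $k$, $K$ and $C_0$, as claimed. The hypothesis $K \geq 3k$ enters only in ensuring the surgery and scalar-curvature setup, and possibly in the uniform geometry bounds. The decay rate itself comes from the zeroth-order term $3k/4$.
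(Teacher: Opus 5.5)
The overall plan (an $\ell$-independent bound near $\partial X$, then a barrier argument) can work, but your barrier is not a supersolution in general, so as written the argument has a genuine gap.

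Your Step 1 is fine. The identity $\int_M G = 16\pi/(3k)$ and the mean-value inequality for the subharmonic function $G$ on balls of $\ell$-independent geometry give a bound $G\leq C_0$ on $\{0\}\times\S^2_K$. A barrier argument on $\Omega := ([0,\ell]\times\S^2_K)\cup Y$ is also a legitimate plan.

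\textbf{Why your barrier fails.} Put $\beta := \sqrt{3k}/2$ and let $\partial_t$ point from the cylinder into $Y$. Take $\Psi = e^{-\beta t}$ on the cylinder and $\Psi = e^{-\beta\ell}\eta$ on $Y$. On $\{t = \ell\}$ the distribution $L\Psi$ has singular part $(\partial_t\Psi|_{\ell^-} - \partial_t\Psi|_{\ell^+})$ times surface measure. So $L\Psi\geq 0$ requires $\partial_t\eta\leq-\beta$ at every point of $\partial Y$: $\eta$ would have to start decaying at least as fast as the half-infinite-cylinder profile.

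Integrating $L\eta = 0$ over $Y$ gives $-\int_{\partial Y}\partial_t\eta = \beta^2\int_Y\eta\leq\beta^2\vol(Y)$. Hence the condition forces $\beta\vol(Y)\geq\mathrm{area}(\S^2_K)$. This fails, e.g., for $\varepsilon$ small and $Y\setminus C_Y$ a small torpedo cap ($K$ large). There the kink is convex, $\Psi$ is a \emph{sub}solution across $\partial Y$, and the comparison breaks down.

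The $\min$ variant fails for the same reason. For $\min(e^{-\beta t}, c\eta)$ to agree with $e^{-\beta t}$ near $\partial Y$ and with $c\eta$ near the inner edge of $C_Y$, you need $\max_{\{t=\ell+\varepsilon\}}\eta<e^{-\beta\varepsilon}$. The slice average $\bar\eta$ solves $\bar\eta'' = \beta^2\bar\eta$ on the collar with $\bar\eta(\ell)=1$, so this forces $\bar\eta'(\ell)<-\beta$, which is the same flux condition. Moreover, $\eta$ is not a function of $t$ alone on $C_Y$ unless $Y$ is rotationally symmetric, so the explicit one-dimensional comparison you appeal to is not available.

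\textbf{How to repair it.} Make the cylinder profile \emph{flatter} at the interface, not steeper.
\begin{enumerate}
\item By the strong maximum principle and Hopf's lemma, $0<\eta<1$ in $\mathrm{int}(Y)$ and $d := \min_{\partial Y}(-\partial_t\eta)>0$.
\item Choose $T>0$ with $\beta\tanh(\beta T)\leq d$. Set $\Psi := \cosh(\beta(\ell+T-t))$ on the cylinder and $\Psi := \cosh(\beta T)\,\eta$ on $Y$.
\item This $\Psi$ is continuous, solves $L\Psi=0$ on each piece, and has a concave kink, so it is a weak supersolution on $\Omega$.
\item Comparison with $G$, together with Step 1, gives $G\leq C_0\cosh(\beta T)/\cosh(\beta(\ell+T))\leq Ce^{-\ell\sqrt{3k}/2}$ on all of $Y$, with $T$ depending only on $Y$ and $k$.
\end{enumerate}
Your first barrier $\phi$ already had the right shape; its Neumann point just has to sit at the distance $T$ beyond $\partial Y$ dictated by $d$.

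\textbf{Comparison with the paper.} With that change your route is a genuine alternative to the paper's, which uses no barrier. The paper averages $G$ over slices, $f(r)=\int_{\{r\}\times\S^2_K}G$, and solves $f''=\tfrac{3k}{4}f$. It uses the flux identity $-f'(r)=\tfrac{3k}{4}\int_{U_r}G>0$; this sign replaces your kink condition and holds automatically. From this it deduces $f(\ell+\varepsilon/2)\leq f(-\varepsilon/2)/\cosh(\beta(\ell+\varepsilon))$. It then needs Harnack inequalities at both ends of the tube plus the maximum principle on $\widetilde Y$. The barrier version needs only the near-end bound and controls $G$ on all of $Y$, at the cost of an auxiliary Dirichlet problem on $Y$ and Hopf's lemma.
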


Using a finer analysis and separation of variables, one may obtain further control of the asymptotics of $G$ as $\ell \to \infty$ on the region $\widetilde{Y}$.
We will, however, not present these results, as they are not needed for the proof of Theorem \ref{appendix-th: arbitrarily many connected components on every psc topology}.
The motivation for Proposition \ref{appendix-prop: decay of greens function on long cylindrical tubes} and these results comes from a corresponding simple analysis on cylindrical manifolds.
We will describe it first, as it will be the prototype for the proof of this proposition.

\begin{motivatingExample}[Cylindrical manifolds] \label{motivatingExample: asymptotically cylindrical manifold}
Let $(M, g)$ be a complete Riemannian three-manifold for which there exists an open, precompact subset $U$ so that $(M \setminus U, g)$ is isometric to the direct product $[0, \infty) \times \S^2_K$, where $K \geq 3k$ and $k > 0$ is a positive constant.
This choice of scale ensures that the scalar curvature of this metric is bounded below by $6k$ on $M \setminus U$.

Fix a point $p \in U$ and denote by $L := - \Delta + 3k/4$.
We claim that $L$ admits a minimal, positive Green's function $G$ with singularity at $p$ for which there is some $C > 0$ so that $G(r, \theta) \leq C e^{-r \sqrt{3k}/2}$ for all $(r, \theta) \in [0, \infty) \times \S^2_K$.
Similar results have been proved for Green's function for the conformal Laplacian on cylinders -- manifolds of the form $\R \times N$ for some $N$ (cf.~{\cite[Proposition 5.12 and Lemmas 6.8 and 6.9]{YamabeMetricsCylindricalManifolds}}).
They almost apply to our situation as well, but for completeness, we present a simple argument proving our claim.

The proof for the existence of $G$ is fairly standard, inspired by {\cite{LiTamGreensFunction}}.
For $R > 0$, denote by $U_R := M \setminus ((R, \infty) \times \S^2_K)$, and let $G_R$ be the Dirichlet Green's function for $L$ on $U_R$ with singularity at $p$.
The strong maximum principle implies that $G_R > 0$ on $\mathrm{int}(U_R)$ and that the sequence $(G_R)_{R > 0}$ is pointwise non-decreasing.
The existence of a positive fundamental solution for $L$ (which is implied by the heat kernel representation of a fundamental solution, cf.~{\cite[Chapter 7]{GrigoryanHeatKernels}}) and the maximum principle also show that the sequence $(G_R)_{R > 0}$ is locally uniformly bounded in $R$.
Thus, the sequence $(G_R)_{R > 0}$ converges to a function $G$ on $M$ as $R \to \infty$, which is then necessarily the minimal, positive Green's function for $L$ with singularity at $p$.

Let us now prove the decay estimate for $G$ on $[0, \infty) \times \S^2_K$.
The idea is the following: if $G$ would not depend on the $\S^2_K$ variable on $[0, \infty) \times \S^2_K$, the equation $\Delta G = 3kG/4$ would reduce to a linear, second-order, ``radial'' differential equation which can be explicitly solved.
If we would also already have the ``boundary condition'' that $G \to 0$ at infinity, then we would immediately obtain that $G(r, \theta) = C e^{- \sqrt{3k}r/2}$ for all $(r, \theta) \in [0, \infty) \times \S^2$, for a fixed constant $C > 0$.
In general, we proceed by solving the corresponding ``radial'' equation on the bounded domains $[0, R] \times \S^2_K$, comparing it to $G_R$, and then letting $R \to \infty$.

Let $C > 0$ be a constant for which $G_R \leq C$ on $\{0\} \times \S^2_K$ for all $R > 0$.
Consider the family of functions
\[
f_R : [0, R] \times \S^2_K \to [0, \infty), \quad f_R(r, \theta) := \frac{C}{\sinh \left( R \sqrt{3k}/2 \right)} \sinh \left( \frac{\sqrt{3k}}{2} (R - r) \right).
\]
A direct computation shows that $L f_R = 0$, with $f_R(0, \cdot) = C$ and $f_R(R, \cdot) = 0$.
Thus, the maximum principle implies that $G_R \leq f_R$ on $[0, R] \times \S^2_K$; letting $R \to \infty$, we obtain that for all $(r, \theta) \in [0, \infty) \times \S^2_K$, we have 
\[
G(r, \theta) \leq \lim_{R \to \infty} f_R(r, \theta) = C e^{-\sqrt{3k}r/2},
\]
which finishes the proof of the desired claim.

A finer analysis (using separation of variables) may also show that $G(r, \theta)$ is, up to a constant, asymptotic (in the $C^\infty$ topology) to $e^{-r\sqrt{3k}/2}$ as $r \to \infty$ uniformly in $\theta \in \S^2_K$, but we will not prove this.
\end{motivatingExample}

The proof of Proposition \ref{appendix-prop: decay of greens function on long cylindrical tubes} is similar to the argument from the Motivating Example \ref{motivatingExample: asymptotically cylindrical manifold}.

\begin{proof}[Proof of Proposition \ref{appendix-prop: decay of greens function on long cylindrical tubes}]
By construction, $M$ has a subset isometric to the cylinder $(-\varepsilon, \ell + \varepsilon) \times \S^2_K$, where $(-\varepsilon, 0] \times \S^2_K$ and $[\ell, \ell + \varepsilon) \times \S^2_K$ are isometric to the collar neighbourhoods $C_X$ and $C_Y$ of $\partial X$ and $\partial Y$, respectively.

First, note that it suffices to show that there is a constant $C > 0$, independent of $\ell$, so that $G \leq C e^{-\ell \sqrt{3k}/2}$ on $\{\ell + \varepsilon/2 \} \times \S^2_K$.
Indeed, since $G$ is positive on $M$ (by the strong maximum principle), satisfies $\Delta G = 3kG/4$ away from its singularity $p$, and since $\{\ell + \varepsilon/2\} \times \S^2_K \simeq \partial \widetilde{Y}$, the maximum principle implies that the maximum of $G|_{\widetilde{Y}}$ is attained on $\partial \widetilde{Y}$.

Thus, we only need to estimate $G$ on $\{\ell + \varepsilon/2 \} \times \S^2_K$.
We will do this by controlling the integral of $G$ on $\{\ell + \varepsilon/2 \} \times \S^2_K$ and then using the local Harnack inequality.
Define the smooth function
\[
f : [-\varepsilon/2, \ell + \varepsilon/2] \to \R, \quad f(r) := \int_{\{r\} \times \S^2_K} G,
\]
where the above integral is taken with respect to the induced area measure.
Since $G > 0$ and $\Delta G = 3kG/4$ on $M \setminus \{p\}$, it follows that $f$ is positive and (by the Divergence Theorem) satisfies the differential equation $f'' = 3kf/4$.

Fundamentally, $f$ is just the zeroth-order mode in the expansion of $G$ given by separating variables on the direct product $(-\varepsilon, \ell + \varepsilon) \times \S^2_K$, and it will control the asymptotics of $G$ for large $\ell$ as the round sphere has positive spectral gap.
The idea behind this proof will be to determine $f$ by solving the differential equation it satisfies (as we did in the Motivating Example \ref{motivatingExample: asymptotically cylindrical manifold}).
However, unlike in that situation, where we could have assumed the ``boundary condition'' of Green's function converging to zero at infinity, we will need to use a weaker property to uniquely determine $G$: that of the monotonicity of the integral of $G$ over the subsets $\{r\} \times \S^2_K$.

Specifically, the important observation is that $f$ is strictly decreasing on $[-\varepsilon/2, \ell + \varepsilon/2]$.
Indeed, for $r \in [-\varepsilon/2, \ell + \varepsilon/2]$, if we denote by $U_r := ([r, \ell] \times \S^2_K) \cup_{\{\ell\} \times \S^2_K \sim \partial Y} Y \subset M$, then, integrating the equation $\Delta G = 3kG/4$ on $U_r$ and using the Divergence Theorem, we obtain that
\[
\int_{\partial U_r} \langle \nabla G, \nu \rangle = \frac{3k}{4} \int_{U_r} G,
\]
where $\nu$ is the outward pointing unit normal to $\partial U_r$.
By construction, $\nu = - \partial_r$, and so this identity tells us that $-f'(r) = (3k/4) \int_{U_r} G$, and this right-hand side is positive as $G > 0$.

Now, we solve the differential equation $f'' = 3kf/4$ with initial conditions $f(\ell + \varepsilon/2)$ and $f'(\ell + \varepsilon/2)$ to obtain that there are constants $A, B \in \R$ so that
\[
f(r) = A \cosh \left( (\ell + \varepsilon/2 - r) \frac{\sqrt{3k}}{2} \right) + B \sinh \left( (\ell + \varepsilon/2 - r) \frac{\sqrt{3k}}{2} \right)
\]
for all $r \in [-\varepsilon/2, \ell + \varepsilon/2]$.
The coefficients $A$ and $B$ are explicitly determined as 
\[
A = f(\ell + \varepsilon/2), \qquad B = - \frac{2f'(\ell + \varepsilon/2)}{\sqrt{3k}},
\]
and thus both are positive constants.
Then, as $f(-\varepsilon/2) = A \cosh ( (\ell + \varepsilon) \sqrt{3k}/2 ) + B \sinh ( (\ell + \varepsilon) \sqrt{3k}/2 )$, we may simply estimate
\begin{equation} \label{eq: right endpoint ell independent estimate for integral of greens function}
    f(\ell + \varepsilon/2) \equiv A \leq \frac{f(-\varepsilon/2)}{\cosh ( (\ell + \varepsilon) \sqrt{3k}/2 )} \leq 2 e^{- \varepsilon \sqrt{3k}/2} f(-\varepsilon/2) e^{-\ell \sqrt{3k}/2}.
\end{equation}

We now show that there is a constant $C > 0$, independent of $\ell$, so that 
\begin{equation} \label{appendix-eq: ell independent bound for the initial value of the integral of greens function on cylinders}
    f(-\varepsilon/2) \leq C.
\end{equation}
Since $(-\Delta + 3k/4)G = 4 \pi \delta_p$, testing the equation against the constant function one yields the identity $\int_M G = (3k)^{-1} 16\pi$.
Let $U$ be a tubular neighbourhood of $\{-\varepsilon/2\} \times \S^2_K$ in $\mathrm{int}(X)$; this is then a tubular neighbourhood of $\{-\varepsilon/2\} \times \S^2_K$ in $M$ which is independent of $\ell$.
The Harnack inequality on $U$ tells us that, after possibly shrinking $U$ (but still containing $\{-\varepsilon/2\} \times \S^2_K$), there is a constant $C > 0$ (dependent only on $U$, as the metric on $M$ outside the cylinder $[0, \ell] \times \S^2_K$ is independent of $\ell$) so that $\sup_U G \leq C \inf_U G$.
It then follows that
\[
\sup_{\{-\varepsilon/2\} \times \S^2_K} G \leq \sup_U G \leq C \inf_U G \leq \frac{C}{\vol(U)} \int_U G \leq \frac{C}{\vol(U)} \cdot \frac{16 \pi}{3k}.
\]
This then immediately implies that $f(-\varepsilon/2)$ is bounded above by a constant dependent on $U$, $K$, and $k$, but not on $\ell$.

With the above estimates, we are ready to finish the proof.
Let $V \subset M$ be a tubular neighbourhood of $\{\ell + \varepsilon/2\} \times \S^2_K$ in $M$, which is dependent on $Y$ but independent of $\ell$ (chosen in the same manner as $U$).
The Harnack inequality then implies that, after possibly shrinking $V$, there is another constant $\widetilde{C} > 0$, dependent only on $V$, so that $\sup_V G \leq \widetilde{C} \inf_V G$.
We may then simply estimate
\[
\sup_{\{\ell + \varepsilon/2\} \times \S^2_K} G \leq \sup_V G \leq \widetilde{C} \inf_V G \leq \widetilde{C} \inf_{\{\ell + \varepsilon/2\} \times \S^2_K} G \leq \frac{\widetilde{C}}{\mathrm{area}(\S^2_K)} \int_{\{\ell + \varepsilon/2\} \times \S^2_K} G \equiv \frac{\widetilde{C}}{\mathrm{area}(\S^2_K)} f(\ell + \varepsilon/2),
\]
and the proof of this proposition is completed by combining this preceding inequality with \eqref{eq: right endpoint ell independent estimate for integral of greens function} and \eqref{appendix-eq: ell independent bound for the initial value of the integral of greens function on cylinders}.
\end{proof}

With the above, we may now prove Theorem \ref{appendix-th: arbitrarily many connected components on every psc topology}.

\begin{proof}[Proof of Theorem \ref{appendix-th: arbitrarily many connected components on every psc topology}]
Let $h$ be a Riemannian metric on $M$ with scalar curvature much larger than $6k$ and let $n \geq 1$ be an arbitrary integer.
By {\cite[Theorem A]{GromovLawsonPSC}} (cf.~also {\cite{SchoenYauPSC}}), we can select $n$ pairwise distinct small, open, geodesic balls $B_1, \cdots, B_n$ which do not contain $p$, smaller open balls $V_i \subsetneqq U_i \subsetneqq B_i$, and deform the metric $h$ into a new metric on $M \setminus (V_1 \cup \cdots \cup V_n)$ that has scalar curvature bounded below by $6k$, agrees with $h$ on $M \setminus (B_1 \cup \cdots \cup B_n)$, and for which $\overline{U_i} \setminus V_i$ is isometric to the direct product manifold $[0, \varepsilon] \times \S^2_{K_i}$ for some $\varepsilon > 0$, where $K_i > 0$ may be chosen arbitrarily large.
We may then apply the same reasoning to $n$ disjoint copies of the three-sphere and isometrically (and smoothly) glue each resulting deformed sphere to $\partial (\overline{U_i} \setminus V_i)$ through a cylinder of the form $[0, \ell_i] \times \S^2_{K_i}$, where $\ell_i > 0$ may also be chosen arbitrarily large. 

The result of this gluing is the manifold $M \# (n \cdot \S^3)$ (which is diffeomorphic to $M$), equipped with a smooth Riemannian metric $g$ for which $\scal \geq 6k$ and for which there are subsets $\mathcal{C}_1, \cdots, \mathcal{C}_n \subset M$ and $\mathcal{S}_1, \cdots, \mathcal{S}_n \subset M$ so that each $\mathcal{C}_i$ is isometric to $[-\varepsilon/2, \ell_i + \varepsilon/2] \times \S^2_{K_i}$ and $\mathcal{C}_i \cup \mathcal{S}_i$ is diffeomorphic to a closed ball in $\R^3$ (as the region obtained from performing the Gromov-Lawson deformation on each three-sphere).
Now, let $G$ be Green's function for $-\Delta_g + 3k/4$ with singularity at $p$.
By Proposition \ref{appendix-prop: decay of greens function on long cylindrical tubes}, we have that 
\begin{equation} \label{appendix-eq: decay of greens function on long cylinders for path connectedness}
    \sup_{S_i} G \leq C_i e^{-\ell_i \sqrt{3k}/2},
\end{equation}
where $C_i > 0$ is a constant depending only on $\varepsilon$, $k$, each $K_i$, and the metric $h$ outside the gluing regions, but is indenpedent of $\ell_1, \cdots, \ell_n$.
Denote by $m_i := \inf_{\mathcal{C}_i} G$.

On the one hand, if $\min_i \ell_i \to \infty$, then \eqref{appendix-eq: decay of greens function on long cylinders for path connectedness} tells us that $\max_i m_i \to 0$.
On the other hand, if $U$ denotes the region in $M$ obtained after removing the (glued) subsets $((0, \ell_i + \varepsilon/2] \times \S^2_{K_i}) \cup \mathcal{S}_i$ and if $\widetilde{G}$ denotes the Dirichlet Green's function of $U$ for the operator $-\Delta + 3k/4$ (in the metric $g$, which is fixed on this region, independent of $\ell_1, \cdots, \ell_n$), the strong maximum principle tells us that $\widetilde{G} > 0$ and $G \geq \widetilde{G}$ on $\mathrm{int}(U)$.
Thus, after slightly shrinking $U$ so that $\widetilde{G} > 0$ on $\overline{U}$ and $M \setminus \overline{U} \simeq \sqcup_{i = 1}^n ([-\delta_i, \ell_i + \varepsilon/2] \times \S^2_{K_i}) \cup \mathcal{S}_i)$, for some $\delta_1, \cdots, \delta_n > 0$ small enough, there is a constant $C > 0$, independent of $\ell_1, \cdots \ell_n$, so that $G \geq \widetilde{G} \geq C$ on $U$.

The above two properties tell us that, as $\min_i \ell_i \to \infty$, the level sets $G^{-1}(t)$ for $t > 0$ close to zero are non-empty and intersect each region $\mathcal{C}_i \cup \mathcal{S}_i$ in a subset that is disjoint from the other $(\mathcal{C}_j \cup \mathcal{S}_j)$'s; in particular, the number of path-connected components of $G^{-1}(t)$ is at least $n$ when $t$ is a regular value of $G$.
\end{proof}

Thus, unlike in non-negative scalar curvature on non-compact three-manifolds (such as in {\cite{MunteanuWangComparisonPaper, BrayKazarasHarmonicFunctionsAndMass, OronzioADMMassAreaCapacity}}, no topological assumption can ensure the path-connectedness of the level sets of Green's function on its own.
At the same time, the following example shows that a geometric condition also cannot ensure this connectedness.

\begin{example}[Non-connectedness of the level sets on constant curvature spaces] \label{appendix-example: non-connectedness on level sets on constant curvature spaces}
Consider the manifold $M := \S^3/Q_8$ equipped with the quotient of the unit, round metric on $\S^3$, where we view $\S^3$ as a subset of the quaternions, and $Q_8 = \{\pm 1, \pm \mathbf{i}, \pm \mathbf{j}, \pm \mathbf{k} \} \subset \S^3$ is the quaternion group.
Fix the point $1 \in \S^3$ and let $G$ be Green's function on $M$ for the operator $-\Delta + 3/4$, with singularity at $\pi(1)$, where $\pi : \S^3 \to M$ is the quotient projection; this is just Green's function for the conformal Laplacian of $M$.
By uniqueness of Green's functions, we have
\[
G \circ \pi = \sum_{\gamma \in Q_8} \widetilde{G}_{\gamma},
\]
where $\widetilde{G}_{\gamma}$ is the Green's function for the operator $-\Delta + 3/4$ on $\S^3$ with singularity at $\gamma$.

By {\cite[Lemma 2.1]{ManeaSGE}}, for every $\gamma \in Q_8$, $\widetilde{G}_{\gamma} = (2 \sin (\dist(\gamma, \cdot)/2))^{-1}$.
In terms of the Euclidean norm $|\cdot|$ of $\R^4$, this can be written as $\widetilde{G}_{\gamma} = |\gamma - \cdot|^{-1}$, and so $G(\pi(x)) = \sum_{\gamma \in Q_8} |\gamma - x|^{-1}$.
Now, denote by $e_1 := 1, e_2 := \mathbf{i}, e_3 := \mathbf{j}$, and $e_4 := \mathbf{k}$, and write $(x_1, x_2, x_3, x_4)$ for the coordinates of a point $x \in \R^4$ in the basis $\{1, \mathbf{i}, \mathbf{j}, \mathbf{k}\}$.
A direct computation shows that, for every $x \in \S^3$, $|x - e_l|^2 = 2(1 - x_l)$ and $|x + e_l|^2 = 2(1 + x_l)$, hence $G(\pi(x)) = f(x_1) + f(x_2) + f(x_3) + f(x_4)$, where $f(t) := (2(1 - t))^{-1/2} + (2(1 + t))^{-1/2}$.
Note that $f$ is a strictly convex function, and one can then check that for every $x \in \S^3$, $G(\pi(x)) \geq 4 f(1/2)$, with equality holding if and only if $x_l = \pm 1/2$ for all $l \in \{1, 2, 3, 4\}$.
Consequently, the set of global minima of $G \circ \pi$ consists of $16$ points in $\S^3$.
Since $\#Q_8 = 8$, it follows that $G$ has exactly two global minima in $M$, and they are necessarily non-degenerate by symmetry and because $\Delta G = 3G/4 > 0$.
The Morse Lemma then implies that the level sets of $G$ close to its minimal level set $\{G = 4 f(1/2)\}$ are diffeomorphic to $\S^2 \sqcup \S^2$, and hence have exactly two path-connected components.

By a similar computation as above, the same result also holds for the level sets of Green's function of the operator $-\Delta + 3k/4$ on $M$ with singularity at $p$, for all $k \geq 1$.
\end{example}

Hence, by Theorem \ref{appendix-th: arbitrarily many connected components on every psc topology} and Example \ref{appendix-example: non-connectedness on level sets on constant curvature spaces}, one would expect that, to ensure the connectedness of the level sets of Green's function, we would need both a topological and a geometric assumption on a closed three-manifold with positive scalar curvature.
However, it is unclear what such assumptions one could take; for this reason, we decided to add the path-connectedness of the regular level sets as a hypothesis in both Theorems \ref{intro-th: sharp monotonic quantity} and \ref{intro-th: alternative sharp monotonic quantity}.
\section{Finiteness of the third integral term from Theorem \ref{intro-th: sharp monotonic quantity}} \label{appendix-sec: morse theory for greens function and integrability of third integral term}

As in the previous sections, let $(M, g)$ be a closed, connected, Riemannian three-manifold and let $k > 0$; fix a point $p \in M$ and let $G$ be Green's function for the operator $L := -\Delta + 3k/4$, with singularity at $p$.
Denote by $s := G^{-1}$ and let $m$ be the minimal value of $G$, which is positive by the strong maximum principle.

The goal of this section is to study the integrability of the function $\|\nabla s\|^{-1}(r^2 - s^2)$ over the sublevel set $\{s \leq r\}$.
In the course of the proof of Theorem \ref{intro-th: sharp monotonic quantity} in Section \ref{sec: sharp monotonic quantity proof of monotonicity}, we implicitly assumed this integrability, and in Appendix \ref{appendix-sec: integration over the (super)level sets of greens function}, we showed that, in this case, this is also locally absolutely continuous as a function of $r \in (0, 1/m)$.
However, a priori, it is not clear whether the integral of $\|\nabla s\|^{-1}(r^2 - s^2)$ over $\{s \leq r\}$ is finite for all $r \leq 1/m$.
By the asymptotics of $s$ near $p$ (cf.~Lemma \ref{appendix-lemma: asymptotics of greens function near singularity} and \eqref{align: ADM mass asymptotics of 3rd integral term in sharp monotonic quantity}), this integrability holds for $r > 0$ small enough, since, as $r \downarrow 0$,
\[
\int_{s \leq r} \frac{r^2 - s^2}{\|\nabla s\|} = O(r^5).
\]

The finiteness of this integral is related (albeit not completely) to the integrability of $\|\nabla s\|^{-1}$ (as we saw, for instance, in \eqref{appendix-eq: condition for local AC of area of level sets of s}; the finiteness of this integral implies the local integrability of $\|\nabla s\|^{-1}$ away from the minimum level set of $G$).
Imposing that $\int_M \|\nabla s\|^{-1} < \infty$ is a viable condition for Theorem \ref{intro-th: sharp monotonic quantity} to be well-posed, but it seems not to be the correct one.
Indeed, one can concretely check that, for $k = 1$, $\R \P^3$ equipped with the quotient of the unit, round metric of the three-sphere is a concrete example where $\|\nabla s\|^{-1}$ is not globally integrable, but $\int_{s \leq r} \|\nabla s\|^{-1} (r^2 - s^2) < \infty$ for all $r \in (0, 1/m]$.
The reason for this is that the set of critical points of $s$ on $\R \P^3$ is precisely equal to $\R \P^2 \subset \R \P^3$, which is a smooth, embedded surface and $\|\nabla s\|^{-1}$ is not integrable near this subset.

Thus, the finiteness of this integral is more delicate.
In the rest of this section, we will discuss certain divergence and convergence criteria for the integral of $\|\nabla s\|^{-1}(r^2 - s^2)$ over $\{s \leq r\}$.
We will see that, for instance, if Green's function is a Morse function (which should be true for generic metrics $g$ with positive scalar curvature), then this integral is always finite.
We will also provide a convergence criterion for when $(M, g)$ is a real-analytic Riemannian manifold.
The proof of all of these results will be a local analysis of $\|\nabla s\|^{-1}$ near its critical points, and essentially depend on the existence of a ``Generalized Morse Lemma''.
At the same time, it seems difficult to construct a (global) example of a closed manifold $(M, g)$ with positive scalar curvature and Green's function $G$ for which $\int_{s \leq r} \|\nabla s\|^{-1} (r^2 - s^2)$ is infinite for some $r > 0$.
Local models for which this integrability fails may be constructed, but it is not clear if they can be made global.

To keep the coming arguments in the spirit of a ``Morse-theoretic analysis'' for Green's function, we will rewrite these integrals in terms of $G$ instead of $s$.
As such, since $s \equiv G^{-1}$, the chain rule tells us that for all $u \in (0, 1/m)$, we have
\[
\int_{s \leq u} \frac{u^2 - s^2}{\|\nabla s\|} = \frac{1}{r^2} \int_{G \geq r} \frac{G^2 - r^2}{\|\nabla G\|},
\]
where $r := u^{-1}$.
Thus, we will instead study the integrability of $\|\nabla G\|^{-1} (G^2 - r^2)$ over the subset $\{G \geq r\}$ for $r \in (m, \infty)$.

All results in this section rely on the idea of stratifying the critical points of $G$ by the rank of the Hessian of $G$ and finding normal forms for $G$ in terms of this rank.
Since $\Delta G = 3kG/4$ and $G$ is positive on $M \setminus \{p\}$ by the strong maximum principle, it follows that $\hess{G}$ is non-zero at every point, \ie at every point of $M \setminus \{p\}$, $\hess{G}$ has rank at least one.
The following simple lemma shows that $\|\nabla G\|^{-1}$ is integrable near every critical point where the rank of $\hess{G}$ is at least two.
In particular, this shows that, for Morse Green's functions, the monotonic quantity from Theorem \ref{intro-th: sharp monotonic quantity} is always finite.

\begin{lemma} \label{appendix-lemma: integrability of the inverse of the gradient of G near points of hessian rank at least two}
If $q \in M \setminus \{p\}$ is a point at which $\nabla G|_q = 0$ and $\mathrm{rank} (\hess{G}|_q) \geq 2$, then $\|\nabla G\|^{-1}$ is integrable near $q$.
\end{lemma}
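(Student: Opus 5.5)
Work in normal coordinates $x=(x_1,x_2,x_3)$ centred at $q$; since $q\neq p$, $G$ is smooth near $q$. The key observation is that $\nabla G$ is a smooth map $F:=\nabla G:\R^3\to\R^3$ with $F(0)=0$ and $dF|_0=\hess{G}|_q$. We will bound $\|F\|$ below using the nondegenerate directions of the Hessian, so that the singularity of $\|F\|^{-1}$ is at worst like $|y|^{-1}$ in a two-dimensional variable $y$. Such a singularity is integrable in the plane, hence integrable in three dimensions.

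\textbf{Splitting the coordinates.} Rotate the coordinates so that $\hess{G}|_q$ is diagonal, with eigenvalues $\lambda_1,\lambda_2\neq 0$ in the $x_1,x_2$ directions; this uses $\mathrm{rank}\geq 2$. Set $y:=(x_1,x_2)$ and $z:=x_3$. Consider the map $\Phi(x):=(\partial_1 G(x),\partial_2 G(x),x_3)$. Its differential at $0$ is invertible, since its upper-left $2\times 2$ block is $\mathrm{diag}(\lambda_1,\lambda_2)$ and its last row is $(0,0,1)$. By the inverse function theorem, $\Phi$ is a diffeomorphism from a small neighbourhood $W$ of $q$ onto a neighbourhood of $0$, with Jacobian determinant bounded above and below by positive constants. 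On $W$ we have $\|\nabla G\|\geq c\,|(\partial_1G,\partial_2G)|$ for some $c>0$, because the metric is comparable to the Euclidean one in normal coordinates.

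\textbf{Changing variables.} Write $\Phi(x)=(w_1,w_2,w_3)$. Then
\[
\int_W \frac{1}{\|\nabla G\|} \leq \frac{1}{c}\int_W \frac{1}{|(\partial_1G,\partial_2G)|}\,\dd x \leq C\int_{\Phi(W)} \frac{\dd w}{|(w_1,w_2)|}.
\]
The last integral is finite, since $\int_{|w'|<\delta}|w'|^{-1}\,\dd w'=2\pi\delta$ in two dimensions, and the result is then integrated over a bounded $w_3$-interval. This proves that $\|\nabla G\|^{-1}$ is integrable near $q$.

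\textbf{Remaining checks.} The only delicate point is confirming that the inverse function theorem applies. This holds because the chosen two partial derivatives have a nondegenerate $2\times 2$ Jacobian block, which is exactly the rank-at-least-two hypothesis. When the rank is $3$, $q$ is a nondegenerate critical point and one gets $\|\nabla G\|\geq c|x|$ directly; the argument above covers this case too. No information on the degenerate direction $x_3$ is needed. This is why rank one, where only $|\nabla G|\gtrsim |x_1|$ holds and the resulting bound $|x_1|^{-1}$ is not integrable, must be handled separately.
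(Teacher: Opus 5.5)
Your proof is correct and is essentially the paper's argument: the paper likewise takes the two gradient components $f_i=\langle \nabla G, E_i\rangle$ along local orthonormal extensions of the nonzero-eigenvalue eigenvectors, completes $(f_1,f_2)$ to a chart via the inverse function theorem, and bounds $\|\nabla G\|^{-1}\le (f_1^2+f_2^2)^{-1/2}$, which is integrable in three dimensions. The only difference is cosmetic: you use the coordinate partials $\partial_1 G,\partial_2 G$ in normal coordinates instead of components along an orthonormal frame.
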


\begin{proof}
As $\mathrm{rank} (\hess{G}|_q) \geq 2$, we can choose an orthonormal set $\{e_1, e_2\}$ of eigenvectors of $\hess{G}|_q$ corresponding to non-zero eigenvalues of $\hess{G}$.
We may then consider two locally defined orthonormal vector fields $E_1$ and $E_2$, with $E_1|_q = e_1$ and $E_2|_q = e_2$, and define the functions $f_i := \langle \nabla G, E_i \rangle$.
Then, both $f_1$ and $f_2$ are smooth, with $(df_i)_q = \hess{G}|_q(e_i, \cdot)$.
This implies that $(df_1)_q$ and $(df_2)_q$ are linearly independent elements of $T_q^* M$.

Now, let $\lambda \in T_q^*M$ be another element so that $((df_1)_q, (df_2)_q, \lambda)$ is a basis for $T_q^*M$, and let $f_3$ be a smooth function near $q$ for which $f_3(q) = 0$ and $(df_3)_q = \lambda$.
It follows that the map $F := (f_1, f_2, f_3)$ has an invertible derivative at $q$, so the Inverse Function Theorem tells us that $F$ is a coordinate chart for $M$ near $q$.
We may then estimate, using the definition of $f_1$ and $f_2$,
\[
\|\nabla G\|^2 \geq \langle \nabla G, E_1 \rangle^2 + \langle \nabla G, E_2 \rangle^2 = f_1^2 + f_2^2,
\]
and so $\|\nabla G\|^{-1} \leq (f_1^2 + f_2^2)^{-1/2}$.
The desired conclusion now follows because the function $(x, y, z) \mapsto (x^2 + y^2)^{-1/2}$ is integrable near $0 \in \R^3$.
\end{proof}

Thus, by Lemma \ref{appendix-lemma: integrability of the inverse of the gradient of G near points of hessian rank at least two}, the non-integrability of $\|\nabla G\|^{-1}$ may appear only in the presence of critical points where $\hess{G}$ has rank equal to one.
The simplest situation where such points exist (inspired by concrete examples, such as $\R \P^3$) is when the gradient of $G$ vanishes on an embedded surface in $M \setminus \{p\}$.
For this, we have the following divergence criterion.

\begin{lemma} \label{appendix-lemma: divergence criterion for third term in sharp monotonic quantity}
For $r \geq m$, suppose that there is a smooth, connected, embedded surface $\Sigma$ in $M \setminus \{p\}$ along which $\nabla G = 0$ and $G \equiv G_0$.

If $G_0 > r$, then
\[
\int_{G \geq r} \frac{G^2 - r^2}{\|\nabla G\|} = \infty,
\]
and if $G_0 = r$, then the function $\|\nabla G\|^{-1} (G^2 - r^2)$ is locally integrable near $\Sigma$.
\end{lemma}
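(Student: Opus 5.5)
The plan is a local normal form for $G$ near $\Sigma$ in Fermi coordinates, followed by a direct comparison of $\|\nabla G\|$ with the distance to $\Sigma$. Fix a point $q \in \Sigma$ and use Fermi coordinates $(x, t)$ near $q$, where $x$ ranges over a neighbourhood of $q$ in $\Sigma$ and $t$ is the signed distance to $\Sigma$ (since $\Sigma$ is embedded and connected, work on a small tubular neighbourhood, two-sided locally). Since $G \equiv G_0$ and $\nabla G = 0$ along $\Sigma$, all tangential first and second derivatives of $G$ vanish on $\Sigma$, and the mixed derivatives $\partial_t \partial_{x_i} G$ also vanish there. Hence $\hess{G}|_\Sigma = \partial_t^2 G \, dt \otimes dt$, and tracing gives $\partial_t^2 G = \Delta G = 3kG_0/4 > 0$ on $\Sigma$.

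Taylor's theorem then gives, uniformly on a compact piece of $\Sigma$,
\[
G - G_0 = \tfrac{3kG_0}{8} t^2 + O(t^3), \qquad \|\nabla G\| = \tfrac{3kG_0}{4} |t| + O(t^2).
\]
The second estimate follows because $\nabla G$ vanishes on $\Sigma$ and its $t$-derivative there is $\partial_t^2 G \, \partial_t$. In particular $G \geq G_0$ near $\Sigma$, and $c|t| \le \|\nabla G\| \le C|t|$ there.

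\textbf{Case $G_0 > r$.} Near $\Sigma$ we have $G^2 - r^2 \ge (G_0^2 - r^2)/2 > 0$, and the whole neighbourhood lies in $\{G \ge r\}$ since $G \ge G_0 > r$. The volume form in Fermi coordinates is comparable to $dx\,dt$. Hence
\[
\int \frac{G^2 - r^2}{\|\nabla G\|} \ge c \int_{\Sigma'} \int_{-\delta}^{\delta} \frac{dt}{|t|} \, dx = \infty
\]
for a small open piece $\Sigma' \subset \Sigma$ of positive area. The integrand is non-negative on all of $\{G \ge r\}$, so the full integral is infinite.

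\textbf{Case $G_0 = r$.} Near $\Sigma$ we have $G \ge G_0 = r$, and
\[
G^2 - r^2 = (G - G_0)(G + G_0) \le C t^2.
\]
So the integrand is at most $C t^2 / (c|t|) = C'|t|$, which is bounded and hence locally integrable near $\Sigma$. To pass from local charts to a neighbourhood of all of $\Sigma$, cover it by finitely many such charts if $\Sigma$ is compact; otherwise the statement is read as local near each point of $\Sigma$, which is what ``locally integrable near $\Sigma$'' means.

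The main obstacle is justifying the uniform lower bound $\|\nabla G\| \ge c|t|$, i.e.\ that the normal Hessian is nondegenerate. This is exactly where $\Delta G = 3kG/4 > 0$ enters, forcing $\partial_t^2 G > 0$ on $\Sigma$. A smaller subtlety is that $\Sigma$ might not be closed, so some care is needed about uniformity of the Fermi neighbourhood. I would handle this by working on relatively compact subsets of $\Sigma$, which suffices for both conclusions.
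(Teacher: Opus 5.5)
Your proposal is correct and follows essentially the same route as the paper. Both arguments take Fermi coordinates around $\Sigma$, use that $\hess{G}|_\Sigma$ has only a normal--normal component equal to $\Delta G = 3kG_0/4 > 0$, derive $G = G_0 + \frac{3k}{8}G_0 t^2 + O(t^3)$ and $\|\nabla G\| = \frac{3k}{4}G_0|t| + O(t^2)$, and compare the integrand with $|t|^{-1}$ when $G_0 > r$ (divergent) or with $|t|$ when $G_0 = r$ (bounded). Your extra care (expanding $\nabla G$ directly instead of differentiating the expansion of $G$, working on relatively compact pieces of $\Sigma$, and noting the integrand is non-negative on $\{G \geq r\}$) just makes explicit what the paper leaves implicit.
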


\begin{proof}
Note that $\Sigma$ is connected and $\nabla G = 0$ on $\Sigma$, hence $G$ is constant on $\Sigma$, equal to some $G_0 \geq m$.
This further implies that if $X$ is a tangent vector field to $\Sigma$, then $\hess{G}(X, \cdot) = 0$, and so $\hess{G}|_\Sigma = 0$.
If we denote by $\nu$ a local unit, normal vector field to $\Sigma$, since $\hess{G}$ is non-zero at every point, we must thus have $\hess{G}(\nu, \nu) = \Delta G = 3k G_0/4$.
Hence, if we choose Fermi coordinates around $\Sigma$ induced by $\nu$ and denote by $\rho$ the corresponding signed distance function to $\Sigma$, Taylor's theorem (along normal geodesics to $\Sigma$) implies that $G$ has the asymptotic expansion
\begin{equation} \label{appendix-eq: local expansion of G in terms of distance function to critical surface}
    G = G_0 + \frac{3k}{8} G_0 \rho^2 + O(\rho^3)
\end{equation}
as $\rho \to 0$.
This means that $\|\nabla G\| = 3kG_0 |\rho|/4 + O(\rho^2)$ as $\rho \to 0$, and so
\[
\frac{G^2 - r^2}{\|\nabla G\|} = \frac{4}{3k} \frac{G_0^2 - r^2}{G_0} \frac{1}{|\rho|} + O(1).
\]
The above right-hand side is integrable near $\rho = 0$ if $G_0 = r$ and not integrable if $G_0 > r$.
\end{proof}

In particular, Lemma \ref{appendix-lemma: divergence criterion for third term in sharp monotonic quantity} shows that $\|\nabla G\|^{-1}(G^2 - m^2)$ is integrable near the minimum level set $\{G = m\}$ if this level set is an embedded surface in $M$.
This is the reason the monotonic quantity from Theorem \ref{intro-th: sharp monotonic quantity} is, for instance, always finite when $k = 1$ and $(M, g) \simeq \R \P^3$; in this case, one may check explicitly that the set of critical points of $G$ is equal to $\R \P^2 \subset \R \P^3$.

The converse of Lemma \ref{appendix-lemma: divergence criterion for third term in sharp monotonic quantity} may not necessarily be true.
However, in the real-analytic category, the following result shows that the non-existence of embedded surfaces of critical points implies that $\|\nabla G\|^{-1}$ is integrable.
This result is proved by using a ``Generalized Morse Lemma'' for real-analytic functions near Hessian-rank one critical points.

\begin{lemma} \label{lemma: integrability of third term in sharp monotonic quantity when metric is analytic}
Suppose, in addition, that $(M, g)$ is a real-analytic Riemannian manifold and that $M \setminus (\{p\} \cup \{G = m\})$ does not contain smooth, embedded surfaces consisting entirely of critical points of $G$.

Then, for a critical point $q \in M \setminus (\{p\} \cup \{G = m\})$ of $G$ for which $\mathrm{rank}(\hess{G}|_q) = 1$, $\|\nabla G\|^{-1}$ is integrable near $q$.
\end{lemma}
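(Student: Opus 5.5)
Near $q$, the Hessian has exactly one nonzero eigenvalue, and it equals $\Delta G(q) = 3kG(q)/4 > 0$. The plan is to reduce $G$ to a normal form $G = G(q) + x^2 + h(y,z)$ in analytic coordinates and then compute $\|\nabla G\|^{-1}$ explicitly.

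\textbf{Step 1: the splitting lemma.} Since $G$ is real-analytic away from $p$ (elliptic regularity for an analytic metric) and $\mathrm{rank}(\hess{G}|_q) = 1$, the analytic Generalized Morse (splitting) Lemma applies. Solve $\partial_x G = 0$ by the analytic implicit function theorem, giving an analytic graph $x = \phi(y,z)$, then complete the square. This produces analytic coordinates $(x,y,z)$ centred at $q$ with
\[
G = G(q) + x^2 + h(y,z),
\]
where $h$ is analytic with $h(0) = 0$, $dh(0) = 0$ and $\hess h(0) = 0$. In these coordinates the metric is comparable to the Euclidean one, so $\|\nabla G\|^2 \geq c\,(4x^2 + \|\nabla h(y,z)\|^2)$.

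\textbf{Step 2: reduction to a two-dimensional integral.} Integrating out $x$ near $0$,
\[
\int \frac{dx}{\sqrt{x^2 + a^2}} \lesssim 1 + \bigl|\log a\bigr|, \qquad a := \|\nabla h(y,z)\|/2.
\]
So it suffices to show that $\log\|\nabla h\|$ is integrable near $0 \in \R^2$. The critical set of $G$ near $q$ is $\{0\} \times \{\nabla h = 0\}$.

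\textbf{Step 3: using the hypotheses.} If $h \equiv 0$, then $\{x = 0\}$ would be an embedded surface of critical points. This surface passes through $q \notin \{p\} \cup \{G = m\}$, so shrinking the neighbourhood keeps it inside $M \setminus (\{p\} \cup \{G = m\})$, contradicting the hypothesis. Hence $h \not\equiv 0$ and $\nabla h$ is a nonzero analytic map. Moreover, $\{\nabla h = 0\}$ is an analytic subset of the plane. If it contained a curve, then near a generic (smooth) point of that curve we would again get a smooth embedded surface $\{x = 0\} \times \text{curve}$ of critical points; I expect to handle this the same way. In any case, $|\nabla h|^2$ is a nonzero real-analytic function, so by Łojasiewicz's inequality, or simply because an analytic function not identically zero has $\log|f| \in L^1_{\mathrm{loc}}$, the function $\log\|\nabla h\|$ is locally integrable. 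Alternatively, if the zero set is isolated, Łojasiewicz gives $\|\nabla h\| \geq c\,|(y,z)|^N$, whose logarithm is clearly integrable.

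The main obstacle is Step 1: making sure the splitting lemma holds analytically with a genuine coordinate change and an error-free normal form, rather than only a smooth or formal one. Once the normal form is in place, integrability follows from the local integrability of the logarithm of a nonvanishing analytic function. Notably, the surface hypothesis is only needed to exclude $h \equiv 0$.
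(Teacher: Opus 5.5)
Your proposal is correct and follows essentially the same route as the paper. Both arguments use the analytic splitting lemma to get $G = G(q) + x^2 + h(y,z)$, use the no-critical-surface hypothesis to rule out constant $h$, compare the metric with the Euclidean one to get $\|\nabla G\|^2 \gtrsim x^2 + |\nabla h|^2$, and finish with the local integrability of $\log$ of a non-trivial real-analytic function. Your explicit integration in $x$, which produces the bound $1 + \bigl|\log|\nabla h|\bigr|$, simply spells out a step the paper leaves implicit.

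One small correction to an aside you do not actually use. If $\{\nabla h = 0\}$ contained a curve, the corresponding critical set $\{x = 0\} \cap \{\nabla h(y,z) = 0\}$ would be a curve in $M$, not a surface, so the hypothesis would say nothing there. That is exactly why, as you note, the hypothesis is only needed to exclude constant $h$.
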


\begin{proof}
Let $q \in M \setminus (\{p\} \cup \{G = m\})$ be a critical point of $G$ for which $\mathrm{rank}(\hess{G}|_q) = 1$.
Since $M$ and $g$ are real-analytic, it follows that $G$ is also real-analytic.
The Splitting Lemma for real-analytic functions (cf.~{\cite[Theorem 2.47]{SingularitiesAnalyticFunctions}} for the holomorphic case, whose proof readily generalizes to the real-analytic case, and cf.~{\cite{SplittingLemmaAnyCharacteristic}} for a recent proof of this lemma over a field of arbitrary characteristic) tells us that there are real-analytic coordinates $(x^1, x^2, x^3)$ centred at $q$, a constant $c \in \R \setminus \{0\}$, and a real-analytic function $h : \R^2 \to \R$ with $h(0) = G(q)$, $dh(0, 0) = 0$, and $D^2h(0, 0) = 0$ (where $D^2 h$ denotes the Euclidean Hessian of $h$ in the coordinates $(x^2, x^3)$), so that, for $x$ near $q$, we have $G(x) = c (x^1)^2 + h(x^2, x^3).$
The constant $c$ is necessarily positive because, by assumption, $\hess{G}|_q$ has two zero eigenvalues and the third one is positive since $\Delta G > 0$.
After rescaling the coordinate $x^1$, we may assume, without loss of generality, that $c = 1$, and thus $G$ has the normal form 
\[
G(x) = (x^1)^2 + h(x^2, x^3)
\]
for $x$ near $q$.

Now suppose, for the sake of contradiction, that $h$ is constant.
Then, $G$ would have the normal form $G(x) = G(q) + (x^1)^2$ for $x$ near $q$, and the embedded surface $\{x^1 = 0\} \subset M \setminus (\{p\} \cup \{G = m\})$ would consist entirely of critical points of $G$; such a surface cannot exist by the hypotheses of this lemma, and thus $h$ is not constant.
Finally, as the metric $g$ is locally bi-Lipschitz equivalent to the Euclidean metric, this normal form for $G$ further implies that there is a constant $C > 0$ so that, for $x$ near $q$, we have
\[
C^{-1} ((x^1)^2 + |dh(x^2, x^3)|^2) \leq \|\nabla G\|^2 \leq C (((x^1)^2 + |dh(x^2, x^3)|^2)),
\]
where $|\cdot|$ is the Euclidean norm on $\R^2$.
This inequality, combined with the result that $\log |u|$ is locally integrable near $0$ whenever $u$ is a non-constant, real-analytic function with $u(0) = 0$, tells us that $\|\nabla G\|^{-1}$ is integrable near $q$.
\end{proof}

Hence, under the assumption of Lemma \ref{lemma: integrability of third term in sharp monotonic quantity when metric is analytic} (together with Lemma \ref{appendix-lemma: integrability of the inverse of the gradient of G near points of hessian rank at least two}), we may conclude that, if the minimal level set $\{G = m\}$ is a smooth surface in $M$, then $\|\nabla G\|^{-1}(G^2 - r^2)$ is integrable on $\{G \geq r\}$ for all $r$ in the range of $G$, and hence the quantity from \eqref{intro-eq: sharp monotonic quantity} in Theorem \ref{intro-th: sharp monotonic quantity} is finite in this case.
However, the hypothesis that $M \setminus (\{p\} \cup \{G = m\})$ does not contain embedded surfaces of critical points of $G$ seems strong and slightly artificial; it may be possible to use another local form for $G$ to show that this condition is satisfied for real-analytic manifolds.

\bibliographystyle{alpha}
\bibliography{bibliography}

\end{document}